\documentclass[a4paper,11pt,x11names]{article}
\usepackage[T1]{fontenc}
\usepackage[utf8x]{inputenc} %extra unicode
\usepackage{lmodern}
\usepackage{amsmath}
\usepackage{amsthm}
\usepackage{amssymb}
\usepackage{authblk}
\usepackage{graphicx}
\usepackage{setspace}
\usepackage{enumitem}

\textwidth=125mm
\textheight=195mm
%%%%%%%%%%%%%%%%%%%%%%%%%%%%%%%%%%%%
% Math style configuration page - v2.1
% By pavel. Last edited July 16 2015
% Tribute to Igor Kortchemski
%%%%%%%%%%%%%%%%%%%%%%%%%%%%%%%%%%%%

\usepackage{amsthm}
\usepackage{amssymb}
\usepackage{xparse}
\usepackage{thmtools}
\usepackage{stackrel}
\usepackage{bbold}
%language options

%shortcuts
\newcommand{\R}{\mathbb{R}}
\newcommand{\C}{\mathbb{C}}

\newcommand{\N}{\mathbb{N}}

\newcommand{\dd}{\mathrm{d}}

\renewcommand{\S}{\mathbb{S}}

%operators
\DeclareMathOperator{\Hess}{Hess}

\DeclareMathOperator{\supp}{supp}

\DeclareMathOperator{\Sp}{Sp}
\DeclareMathOperator{\tr}{tr}

\DeclareMathOperator{\Ker}{Ker}

\DeclareMathOperator{\dist}{dist}
\DeclareMathOperator{\Span}{Span}
\DeclareMathOperator{\diam}{diam}

% theorems configuration

\makeatletter
\newtheoremstyle{indented}
{7pt} %vertical space before
{7pt} % vertical space after
{} %{\addtolength{\@totalleftmargin}{2.5em}
	%\addtolength{\linewidth}{-3.5em}
	%\parshape 1 3.5em \linewidth} %body font
{1.5em} %indent
{\bfseries} %header font
{.} %punctuation
{.5em} %horizontal space after header
{} %header specification

\theoremstyle{definition}

\newtheorem{defn}{Definition}[section]
\newtheorem{ex}[defn]{Example}
\newtheorem{example}[defn]{Example}
\theoremstyle{plain}
\newtheorem*{theorem*}{Theorem}

\newtheorem{theorem}{Theorem}

%\newtheorem{theoremaux}{Théorème}   %theorems have their own numerotation and are accessible through the TOC
%\NewDocumentEnvironment{theorem}{o}  %TOC accessibility
% {\IfNoValueTF{#1}
%{\theoremaux\addcontentsline{toc}{subsection}{\protect Théorème \thetheoremaux}}
% {\theoremaux[#1]\addcontentsline{toc}{subsection}{\protect\kern -1em Théorème \thetheoremaux\enspace(#1)}}
%\ignorespaces}
%{\endtheoremaux}
\newenvironment{preuve}{
	\noindent \textbf{Proof. }}{\hfill $\square$\medskip\par}

\newtheorem{prop}[defn]{Proposition}

\newtheorem{lem}[defn]{Lemma}

  %Axioms have their own numerotation

\theoremstyle{definition}
\newtheorem{rem}[defn]{Remark} %remarks are not indented

%--------------
% Mise en page mathématique
%--------------
\addtolength{\jot}{.2em}

% Lignes et colonnes dans les matrices
\makeatletter
\renewcommand*\env@matrix[1][*\c@MaxMatrixCols c]{%
  \hskip -\arraycolsep
  \let\@ifnextchar\new@ifnextchar
  \array{#1}}
\makeatother

%%% Local Variables:
%%% mode: latex
%%% TeX-master: "Article"
%%% End:

\widowpenalty 10000
\clubpenalty 10000

\usepackage{a4wide}
\textwidth 13.5cm
\textheight 23cm
\hoffset 1cm

\title{Low-energy spectrum of Toeplitz operators with a miniwell}
\author{Alix Deleporte\thanks{deleporte@math.unistra.fr}}
\affil{Universit\'e de Strasbourg, CNRS, IRMA UMR 7501, F-67000 Strasbourg, France}

\begin{document}

\maketitle

\let\thefootnote\relax\footnote{This work was supported by grant
  ANR-13-BS01-0007-01\\
MSC 2010 Subject classification: 32A25
45C05
47B35
58J40
58J50
81Q05
81Q10
81Q20
81Q35}
\begin{abstract}
We study the concentration properties of low-energy states for quantum
systems in the semiclassical limit, in the setting of Toeplitz
operators, which include quantum spin systems as a large class of examples. We establish tools proper to Toeplitz quantization to give
concentration properties under weak conditions. In addition, we build
up symplectic normal forms in two particular settings, including a generalisation of
Helffer-Sj\"ostrand miniwells, in order to prove asymptotics for the
ground state and estimates on the number of low-lying eigenvalues.

%   % State the problem
% Semiclassical ground states may microlocalise only on a subset of the
% minimal set of the principal symbol; this has been studied
% mainly in the context of (magnetic) Schr\"odinger operators.
%   % Why is it a problem
%   The subprincipal behaviour of Toeplitz operators differs from
%   pseudodifferential calculus, and the class of symbols is
%   broader than in the magnetic Schr\"odinger case.
%   % Startling sentence
% We prove subprincipal effects on microlocalisation for the ground
% state of Toeplitz operators without condition on the zero set, in the spirit of Melin's inequality. In two
% simple settings (including
% the ``miniwells'' of Helffer-Sj\"ostrand) we give asymptotics for the
% ground state and estimates on the number of low-lying eigenvalues.
% % Implications
% This extends the mathematical results on quantum
% selection to semiclassical spin systems, in a context
% where the minimal set of the principal symbol may be irregular.
  
% In the semiclassical limit, it is well-known that the first eigenvector of a Toeplitz operator concentrates on the minimal set of the symbol. In this paper, we give a more precise criterion for concentration in the case where the minimal set of the symbol is a submanifold, in the spirit of the ``miniwell condition'' of Helffer-Sj\"ostrand.
\end{abstract}
% \tableofcontents
\section{Introduction}
 \subsection{Quantum selection}
The computation of ground states for quantum systems is an ubiquitous
problem of great difficulty in the non-integrable case, such as
antiferromagnetic spin models on lattices in several dimensions.
% semiclassical methods: general method proposed by Doucot
On those systems, approaches in the large spin limit are commonly used
\cite{sobral_order_1997,lecheminant_order_1997,reimers_order_1993,chubukov_order_1992}, in
an effort to reduce the problem to the study of the minimal set of the
classical energy. A general procedure of \emph{semiclassical order by
  disorder} was proposed by Dou\c{c}ot and
Simon \cite{doucot_semiclassical_1998}, in situations where this
classical minimal set is not discrete.

% In schrod, already done mathematically
In the mathematical setting of Schr\"odinger operators in the
semiclassical limit, a general study of ground state properties 
was done by Helffer and
Sj\"ostrand \cite{helffer_multiple_1984,helffer_puits_1986}, including situations where the minimal set of the
potential is a smooth submanifold.
% spin systems : phase space compact. Toeplitz. Previous art: morse
% func
The classical phase space of spin systems, a product of spheres, is
compact. In particular, spin systems are neither Schr\"odinger operators
nor given by Weyl quantization. However, spin operators are example of
Toeplitz operators, which allows to understand the large spin limit as
a semiclassical limit. In a previous article
\cite{deleporte_low-energy_2016}, we studied semiclassical
concentration of ground states in the context of semiclassical Toeplitz operators, when the
minimal set of the classical energy (or symbol) is a finite set of non-degenerate
points, with results analogous to the Schr\"odinger case
\cite{helffer_multiple_1984}.

% Here we want H-S and more (ex Kagome)
In frustrated antiferromagnetic spin systems, such as on the Kagome
lattice, the minimal set of the classical energy does not form a smooth
submanifold. The goal of this article is to not only to extend the
degenerate case \cite{helffer_puits_1986} to Toeplitz quantization, but also to generalise the
geometrical conditions on the zero set of the classical energy.

% Main results
We prove several results of \emph{quantum selection}: not all points
of classical phase space where the energy is minimal are equivalent
for quantum systems; and the semiclassical quantum ground state
localises only on a subset of the classical minimal set. To do so, on
one hand we
develop techniques which are proper to Toeplitz quantization; on the
other hand we prove new symplectic normal forms which are also useful in
the context of pseudodifferential calculus.

Theorem A requires
a weak condition on the symbol near its zero set, and applies in
particular to symbols which are analytic near zero, which is the case
of all physical examples of interest. It is done in the spirit of Melin's inequality (see
\cite{melin_lower_1971}, or \cite{hormander_analysis_2007}, Thm
22.2.3). 
Theorem B is more precise and applies in a particular
setting which generalises the miniwells of
\cite{helffer_puits_1986}. Theorem C treats a degenerate case where
the symbol is minimal on a set with a singular point. Theorem D
analyses the relative role of regular and singular points in the low-energy Weyl law.

\subsection{Main results}

In order to state the main theorems we need to introduce Toeplitz
quantization, the criterion
under which localisation takes place, and what localisation means in
this context.

Toeplitz quantization takes place on quantizable K\"ahler manifolds
\cite{charles_berezin-toeplitz_2003}, which are complex manifolds with
a symplectic and a Riemannian structure. Let $M$ be such a manifold and let $h\in C^{\infty}(M,\R)$. Through Toeplitz quantization we associate to this
function a sequence of self-adjoint operators $(T_N(h))_{N\geq 1}$
acting on a sequence of Hilbert spaces $(H_N)_{N\geq 1}$ (see
Definitions \ref{defn:szego} and \ref{def:toeplitz}). The semiclassical limit
is $N\to +\infty$. In this article we are interested in
the spectrum and eigenvectors of $T_N(h)$ for $N$ large.

Suppose $\min(h)=0$. The selection criterion
consists in a continuous function $\mu$ (see Definition
\ref{def:mu}). This function is defined on $\{h=0\}$, and depends on
the Hessian of $h$. It captures the effects of order $N^{-1}$ on the low-energy spectrum of $T_N(h)$. For each point $x$ such
that $h(x)=0$, we call $\mu(x)$ the \emph{Melin value} at $x$.

 The Hilbert spaces $H_N$ consist of $L^2$ functions on a bundle
over $M$ (with projection $\pi$). In particular, for any Borel set $B\subset M$ and function $u\in
H_N$, the microlocal mass of $u$ on $B$ is directly
defined (as $\int_{\pi^{-1}(B)}|u|^2\dd Vol$), in contrast with Weyl quantization where this needs some
work; see Definition \ref{defn:loc}.
\begin{theorem}
  Let $M$ be a compact K\"ahler quantizable manifold and $h\in
  C^{\infty}(M,\R)$. Suppose that $\min(h)=0$. Let $\mu$ the function
  associating to each point where $h$ vanishes the Melin value at this
  point. Let $\mu_{\min}=\min(\mu(x),x\in M,h(x)=0)$.  
  Suppose that there exist $C>0$ and $\alpha>0$ such that, for every $t\geq 0$,
one has
\[
\dist_{\text{Hausdorff}}\left(\{h\leq t\},\{h=0\}\right)\leq C t^{\alpha}.
\]

Then there exist $C>0$ and $\epsilon>0$ such that, for every $N\geq
1$, one has
\[|\min \Sp(T_N(h)) - N^{-1}\mu_{\min}| \leq CN^{-1-\epsilon}.\]
Here $\Sp(T_N(h))$ denotes the spectrum of $T_N(h)$.

Let $((\lambda_N,u_N))_{N\geq 1}$ be a sequence of
eigenpairs of $(T_N(h))_{N\geq 1}$. If $\|u_N\|_{H_N}=1$ and
$\lambda_N=N^{-1}\mu_{min}+o(N^{-1})$, then
for any open set $U$ at positive distance from $$\{x\in M, h(x)=0,
\mu(x)=\mu_{\min}\},$$ as $N\to +\infty$ there holds $$\int_{\pi^{-1}(U)}
|u_N|^2\dd Vol=O(N^{-\infty}).$$
\end{theorem}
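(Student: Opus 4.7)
\begin{preuve} (Proposal.)
The plan is to split Theorem A into three pieces: an upper bound on $\min\Sp(T_N(h))$, a matching lower bound (in the spirit of Melin's inequality), and a microlocal concentration estimate for low-energy eigenvectors. The Hausdorff hypothesis on the sublevel sets of $h$ plays the role of a \emph{tamenesss} condition that lets us translate smallness of $h$ into geometric proximity to the zero set.

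For the upper bound, I would build a quasi-mode localised at a point $x_0 \in \{h=0\}$ with $\mu(x_0)=\mu_{\min}$. The natural candidate is a coherent state of the Bergman kernel at $x_0$, which concentrates microlocally at $x_0$ on a scale $N^{-1/2}$. Writing $h$ near $x_0$ in suitable Bargmann-type coordinates, $h(x_0+N^{-1/2}y) = \tfrac12 \langle \Hess h(x_0) y, y\rangle + O(N^{-3/2})$ after rescaling; the leading contribution of $\langle T_N(h)\psi_{x_0},\psi_{x_0}\rangle$ is exactly $N^{-1}\mu(x_0)$ by the very definition of the Melin value as the ground-state energy of the harmonic model attached to $\Hess h(x_0)$ (Definition~\ref{def:mu}). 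This gives $\min\Sp(T_N(h)) \le N^{-1}\mu_{\min} + O(N^{-3/2})$.

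The lower bound is the main obstacle. I would proceed by a microlocal partition of unity $1=\sum_j \chi_j^2$ combined with a ``IMS-type'' formula for Toeplitz operators, so that $\langle u, T_N(h) u\rangle \ge \sum_j \langle T_N(\chi_j)u, T_N(h) T_N(\chi_j)u\rangle - O(N^{-2})\|u\|^2$. On patches that stay away from $\{h=0\}$, $h$ is uniformly bounded below, so $T_N(h)$ is bounded below by a positive constant and those pieces are forbidden for low-energy states. On patches around points $x_0 \in \{h=0\}$, I would use the local normal form of $h$ to compare $T_N(h)$ with its harmonic model and obtain the sharp bound $\langle T_N(\chi_j)u, T_N(h)T_N(\chi_j)u\rangle \ge (N^{-1}\mu(x_0)-CN^{-1-\epsilon})\|T_N(\chi_j)u\|^2$; this is where the Hausdorff assumption is used, to argue that the scales $N^{-1/2}$ coming from the Hessian really dominate the remainder, by controlling $\{h\le tN^{-1}\}$ within a $t^\alpha N^{-\alpha}$-neighbourhood of $\{h=0\}$. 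Summing in $j$ gives the lower bound with $\mu_{\min}$.

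Once both bounds are in place, the concentration statement follows by a standard bootstrap. Given an eigenpair $(\lambda_N,u_N)$ with $\lambda_N = N^{-1}\mu_{\min}+o(N^{-1})$, I would test the inequality $T_N(h)\ge N^{-1}(\mu\circ\pi)-CN^{-1-\epsilon}$ (applied microlocally on each patch) against $u_N$: on any open $U$ at positive distance from $\{\mu=\mu_{\min},h=0\}$, either $h\ge c>0$ on $U$, or $\mu \ge \mu_{\min}+c$ near $U$, and in both cases the microlocal mass $\int_{\pi^{-1}(U)}|u_N|^2$ is bounded by a contribution strictly larger than $\lambda_N$, forcing it to be $o(1)$. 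Iterating this argument after multiplying $u_N$ by successively finer Toeplitz cut-offs, together with the smoothing properties of $T_N$ (which upgrade polynomial smallness to $O(N^{-\infty})$) yields the claimed $O(N^{-\infty})$ decay. The technical heart is thus the Melin-type lower bound in step two, specifically the quantitative comparison of $T_N(h)$ with its harmonic model under only the Hausdorff hypothesis, rather than a clean transverse non-degeneracy of the zero set.
\end{preuve}
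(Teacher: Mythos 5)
Your overall architecture (quasimode upper bound, localised Melin-type lower bound, bootstrap for concentration) matches the paper's, but the execution has genuine gaps, the clearest of which is in the upper bound. A coherent state $\psi_{x_0}$ does \emph{not} realise the Melin value: for a quadratic form $Q$ one has $\langle \psi_{x_0}, T_N(Q)\psi_{x_0}\rangle \sim \tfrac{N^{-1}}{2}\tr Q$, whereas $\mu(Q)=\inf\Sp(T_1(Q))$ is attained (if at all) by a \emph{squeezed} state and is strictly smaller than $\tfrac12\tr Q$ unless $Q$ is isotropic --- compare the paper's example $\mu(\alpha x^2+\beta y^2)=\tfrac14(2\sqrt{\alpha\beta}+\alpha+\beta)\le \tfrac{\alpha+\beta}{2}$. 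So your trial state only yields an upper bound with $\tfrac12\tr Q$ in place of $\mu(Q)$, which is not $N^{-1}\mu_{\min}$. Worse, you cannot simply replace $\psi_{x_0}$ by ``the ground state of the harmonic model'': under the hypotheses of Theorem A the Hessian at $x_0$ is typically degenerate ($\{h=0\}$ may be a set of positive dimension), so $T_N(Q)$ has no $L^2$ ground state and its spectral infimum is not attained. The paper (Proposition \ref{prop:upest}) regularises the model to $Q+N^{-\alpha}|\cdot|^2$, whose ground state exists and localises at scale $N^{-\frac 12+\delta}$, and then pays an error controlled by the $\tfrac{1}{2n}$-H\"older continuity of $Q\mapsto\mu(Q)$. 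Without some such device the two-sided estimate on $\min\Sp(T_N(h))$ does not close.

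The lower bound and the concentration step also gloss over the points where the work actually is. For the quadratic comparison to beat the cubic remainder you must take patches of radius $N^{-\frac 12+\delta}$, and then the constants in any IMS-type identity involve derivatives of the cutoffs and are not $O(N^{-2})$ uniformly; the paper instead (i) uses that $u_N$ is an eigenfunction, hence a priori $O(N^{-\infty})$ on $\{h\ge N^{-1+\delta_1}\}$ (Proposition \ref{prop:preloc}), and (ii) chooses the partition \emph{adapted to} $|u_N|^2$ via the cutting Lemma \ref{lem:cutting}, so that the overlaps carry mass $O(N^{-\delta_0/2})$ and the cross terms are $o(N^{-1})$. Your inequality $T_N(h)\ge N^{-1}(\mu\circ\pi)-CN^{-1-\epsilon}$ is not well posed as written, since $\mu$ is defined only on $\{h=0\}$ and is merely H\"older continuous there. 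Finally, for the concentration on $\{\mu=\mu_{\min}\}$ (as opposed to on $\{h=0\}$), the delicate case is an open set $U$ on which $h$ vanishes but $\mu>\mu_{\min}$: there $T_N(h)-N^{-1}\mu_{\min}$ is not locally positive in any naive sense, and ``smoothing properties of $T_N$'' do not upgrade $o(1)$ to $O(N^{-\infty})$. The paper's mechanism is to replace $h$ by $\tilde h$ equal to $h$ near $U$ but with a strictly larger minimal Melin value, invoke invertibility and pseudolocality of the resolvent of $T_N(\tilde h-N^{-1}\mu_{\min})$ (Proposition \ref{prop:plr}), and iterate the resolvent identity $k$ times to get $O(N^{-k\epsilon})$ for every $k$. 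You would need to supply an equivalent of this construction for your bootstrap to terminate.
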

Theorem A already appears in previous work
\cite{deleporte_low-energy_2016}, under the much stronger hypothesis
that $\{h=0\}$ is a finite set of regular critical points.

\begin{theorem}
  Under the hypotheses of Theorem A, suppose that the function $\mu$ reaches its non-degenerate minimum on a unique
  point $P_0$. Suppose further that, in a neighbourhood of $P_0$, the
  set $\{h=0\}$ is an isotropic submanifold of $M$, on which $h$
  has non-degenerate transverse Hessian matrix.

Then for any sequence $(u_N)_{N\geq 1}$ of unit eigenfunctions corresponding to the first
eigenvalue of $T_N(h)$, for any $\epsilon>0$, one has
$$\int_{\big\{dist(\pi(y),P_0)>N^{-\frac 14+\epsilon}\big\}}|u_N(y)|^2\dd Vol=O(N^{-\infty}).$$

Moreover, the first eigenvalue is simple and the spectral gap is of
order $N^{-\frac 32}$. There is a full expansion of the first
eigenvalue and eigenvector in powers of $N^{-\frac 14}.$
\end{theorem}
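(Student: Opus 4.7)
The strategy is to combine the soft localisation of Theorem A with a hard local analysis based on a symplectic normal form near $P_0$. Because $\mu$ attains its non-degenerate minimum only at $P_0$, the set $\{h=0,\mu=\mu_{\min}\}$ is reduced to a single point, so Theorem A already tells me that any sequence of unit ground-state eigenfunctions $u_N$ places $O(N^{-\infty})$ of its mass outside any fixed open neighbourhood of $P_0$. The entire spectral problem thereby reduces, modulo $O(N^{-\infty})$, to an analysis of $T_N(h)$ on a small neighbourhood of $P_0$.

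The first real step is to construct a symplectic normal form near $P_0$. Using the hypotheses that $\{h=0\}$ is locally an isotropic submanifold of some dimension $k$ and that the transverse Hessian of $h$ is non-degenerate, a Weinstein-type tubular neighbourhood argument yields Darboux coordinates $(x,\xi,y,\eta)$ with $x\in\R^k$, such that locally $\{h=0\}=\{\xi=0,\,y=0,\,\eta=0\}$ and $h$ equals a positive-definite transverse quadratic form $Q(y,\eta)$ plus higher-order terms. Iterating the normalisation, I would build an asymptotic sequence of symplectomorphisms removing the higher-order terms and producing a model symbol $\tilde h$ in which the Melin value $\mu$ appears as the effective subprincipal datum on $\{\tilde h=0\}$. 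This is precisely the generalisation of Helffer--Sjöstrand miniwells that the paper announces.

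I would then implement this normal form at the quantum level through a Toeplitz Fourier integral operator $U_N$ intertwining $T_N(h)$ with $T_N(\tilde h)$ up to $O(N^{-\infty})$. The model operator $T_N(\tilde h)$ approximately decouples into a transverse quantum harmonic oscillator (spectral gap of order $N^{-1}$, ground state Gaussian at scale $N^{-1/2}$) tensored with an effective operator on the isotropic submanifold whose principal symbol is $N^{-1}\mu(x,\xi)$. Since $\mu$ has a non-degenerate minimum at $P_0$, this effective operator is, after rescaling, a semiclassical harmonic oscillator at effective semiclassical parameter $N^{-1/2}$: its ground state localises at tangential scale $N^{-1/4}$ and its spectral gap is of order $N^{-1}\cdot N^{-1/2}=N^{-3/2}$. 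This yields all three quantitative assertions. A Birkhoff normal form procedure iterated in powers of the natural small parameter $N^{-1/4}$ then delivers the full asymptotic expansion of the first eigenvalue and eigenvector; standard quasimode / exact eigenfunction matching combined with Theorem A converts this into the claimed localisation and gap estimates.

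The main obstacle is the construction of the normal form at the quantum level with enough precision to capture the Melin correction. The principal symbol of $h$ is of order one, whereas the ground-state energy lives at order $N^{-1}$ and the spectral gap at order $N^{-3/2}$. To resolve these scales the Toeplitz conjugation must preserve not only principal symbols but also the subprincipal data associated to an isotropic submanifold (neither Lagrangian nor point-like), and must be iterable to all orders in $N^{-1/4}$. Setting up this symplectic and quantum normal form is the technical heart of the argument; once it is in place, the rest of the proof is semiclassical routine.
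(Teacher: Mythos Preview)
Your strategy is essentially the one the paper follows: Theorem~A localises near $P_0$, a symplectic normal form (Proposition~\ref{prop:coords}) together with a quantum map (Proposition~\ref{prop:qmaps}) reduces to a model Weyl operator, and a two-scale expansion in powers of $N^{-1/4}$ (Proposition~\ref{prop:eigenf-reg}) yields the quasimode, the gap, and the concentration rate; Lemma~\ref{lem:qmaps-speed} transports the $N^{-1/4}$ concentration back through the quantum map.

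One point deserves correction. Your description of the effective operator on the isotropic submanifold as having ``principal symbol $N^{-1}\mu(x,\xi)$'' is not quite right, and the scales would not come out as you state them if it were. The transverse Hessian of $h$ acts on the full normal bundle, which in your coordinates includes the cotangent direction $\xi$ as well as $(y,\eta)$; in the paper's normal form this is the splitting $Q_S(q)(p)+\sum_i\lambda_i(q)(x_i^2+\xi_i^2)$. After projecting onto the transverse harmonic-oscillator ground state, the effective operator on the submanifold is of Schr\"odinger type, $Op_W^{N^{-1}}\!\big(Q_S(0)(p)\big)+N^{-1}\big(\mu(q)-\mu_{\min}\big)$: a principal-order kinetic term in $p$ together with an $N^{-1}$-subprincipal potential coming from $\mu$. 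It is precisely this mixed-order structure that forces the effective semiclassical parameter to be $N^{-1/2}$ and the gap to be $N^{-3/2}$; a symbol that was genuinely $N^{-1}\mu(x,\xi)$ would give a gap of order $N^{-2}$. You reached the correct scales, so presumably this is what you had in mind, but the point is worth stating precisely. Relatedly, the paper does not iterate the classical normal form beyond quadratic order; higher-order terms are absorbed directly into the perturbative quantum expansion via the $N^{1/4}$ squeezing, which is simpler than a full Birkhoff procedure.
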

Following Helffer-Sj\"ostrand \cite{helffer_puits_1986}, we will call
$P_0$ a \emph{miniwell} for $h$.

Under the conditions of Theorem B, the first eigenvector concentrates
rapidly on $\{h=0\}$, and the speed of concentration towards the point
which minimises $\mu$ is much slower. In particular this state is more
and more squeezed as $N$ increases.

\begin{theorem}
  Under the hypotheses of Theorem A, suppose that the function $\mu$  reaches its minimum on a
  unique point $P_0$ at which there is a \emph{simple crossing} (see
  Definition \ref{defn:simple-crossing}).

  Then for any sequence $(u_N)_{N\geq 1}$ of unit eigenfunctions corresponding to the first
eigenvalue of $T_N(h)$, for any $\epsilon>0$, one has
$$\int_{\big\{dist(\pi(y),P_0)>N^{-\frac 13+\epsilon}\big\}}|u_N(y)|^2\dd Vol=O(N^{-\infty}).$$

Moreover, the first eigenvalue is simple and the spectral gap is of
order $N^{-\frac 43}$. There is a full expansion of the first
eigenvalue and eigenvector in powers of $N^{-\frac 16}.$
\end{theorem}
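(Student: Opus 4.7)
The plan is to combine the concentration statement of Theorem A with an anisotropic semiclassical analysis at the singular minimum $P_0$ of $\mu$. Theorem A, applied under the hypotheses of Theorem C, already forces any $L^2$-normalised ground state $u_N$ of $T_N(h)$ to be microlocally concentrated on the single point $P_0$, with mass $O(N^{-\infty})$ outside any fixed neighbourhood of $P_0$. Consequently the problem can be localised, and I only need to study $T_N(h)$ on an arbitrarily small neighbourhood of $P_0$ in $M$.

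The core step is to put $h$ into a symplectic normal form adapted to the simple crossing of $\{h=0\}$ at $P_0$, and to quantise this change of variables. In Darboux coordinates centred at $P_0$, the simple crossing assumption should produce a model symbol in which the transverse directions to the crossing contribute non-degenerate quadratic forms, while along the singular direction $h$ vanishes to higher (essentially quartic) order across $P_0$ — the quartic exponent being what matches the announced $N^{-4/3}$ gap. A Toeplitz conjugation, analogous to the one used for Theorem B but now respecting the singular geometry of $\{h=0\}$, transports $T_N(h)$ onto a model Toeplitz operator on a Bargmann-type space, up to remainders of lower order in $N^{-1}$. Rescaling the transverse variables by $N^{-1/2}$ and the singular variable by $N^{-1/3}$, the leading operator becomes $N^{-1}$ times an $N$-independent model with discrete simple spectrum and $O(1)$ gap; its ground eigenvalue must equal $\mu_{\min}$, since the Melin lower bound from Theorem A is saturated. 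This identifies the correct model and produces a first eigenvalue of order $N^{-1}\mu_{\min}$.

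A Rayleigh--Schr\"odinger expansion of the conjugated symbol in the rescaled coordinates then generates quasi-modes. Because the two rescaling exponents $\tfrac12$ and $\tfrac13$ have common denominator $6$, the Taylor expansion of $h$ at $P_0$ in rescaled variables is a formal series in $N^{-1/6}$; at each order, the recurrence is solvable, the Fredholm condition at each step being ensured by simplicity of the ground state of the model together with the fact that $P_0$ is the unique minimiser of $\mu$. Standard quasimode-to-eigenmode conversion then yields the full asymptotic expansion of the first eigenpair in powers of $N^{-1/6}$. The spectral gap of order $N^{-4/3}$ comes from the $O(1)$ gap of the model after undoing the $N^{-1}$ prefactor and the $N^{-1/3}$ rescaling of the first subleading correction; the concentration at scale $N^{-1/3+\epsilon}$ follows from super-exponential decay of the model ground state in the rescaled singular variable, upgraded to $O(N^{-\infty})$ outside a ball of radius $N^{-1/3+\epsilon}$ by Agmon-type weighted estimates on the Toeplitz side.

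The main obstacle is the construction and quantisation of the symplectic normal form at a simple crossing. Unlike the smooth isotropic setting of Theorem B, where a Weinstein tubular neighbourhood reduces $h$ directly to its transverse Hessian, here $\{h=0\}$ is genuinely singular at $P_0$ and one must classify the germ of $h$ up to symplectomorphism under the simple crossing condition, then realise this classical normal form by a Toeplitz Fourier-integral conjugation compatible with the holomorphic structure. Once this step is carried out, the remaining spectral analysis is a perturbation of a solvable model and the three quantitative statements follow from the model analysis described above.
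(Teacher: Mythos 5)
Your overall strategy coincides with the paper's: localise at $P_0$ via Theorem A, put $h$ in a symplectic normal form adapted to the crossing, quantise by a quantum map and a Bargmann conjugation, rescale anisotropically ($N^{-1/2}$ transversally, $N^{-1/3}$ along the singular directions), and run a perturbative expansion in $N^{-1/6}$ around a model operator. However, the proposal has two genuine gaps. First, you yourself flag the symplectic normal form at the crossing as ``the main obstacle'' and do not construct it; this is precisely where the work lies. The difficulty is that the distribution spanned by the slow modes, which is defined separately along $Z_1\setminus\{P_0\}$ and $Z_2\setminus\{P_0\}$ by diagonalising the transverse Hessian, must be shown to extend smoothly and integrably \emph{through} the singular point $P_0$, where the Hessian degenerates in the $Z_1\oplus Z_2$ directions. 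The paper handles this by a variational characterisation of the span of the largest symplectic eigenvectors (maximising the spectral gap $\kappa(F)$ of $J_FM_F$ over the Grassmannian), which is stable under perturbation and hence smooth across $P_0$; some such argument is indispensable and cannot be replaced by a Weinstein-type tubular neighbourhood.

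Second, and more conceptually, you attribute the $N^{-1/3}$ confinement and the $O(1)$ gap of the rescaled model to the quartic vanishing of $h$ across $P_0$. But the principal part of the model, $Q(iD)+\sum\alpha_{ijkl}q_{1,i}q_{1,j}q_{2,k}q_{2,l}$, has a potential that vanishes identically on the two axes $\{q_2=0\}$ and $\{q_1=0\}$: by itself it does not select $P_0$ among the points of $Z_1\cup Z_2$, and its ground state is not concentrated at the origin by any principal-symbol mechanism. The confinement along the branches is a \emph{subprincipal} effect: the linear term $\nabla\bigl(\tfrac12\sum\lambda_i+\tfrac14\tr Q_S\bigr)\cdot(q_1,q_2)$ coming from $g_1$, together with the last condition of Definition \ref{defn:simple-crossing} ($\mu(x)-\mu(P_0)\geq c\,\dist(P_0,x)$ on $Z_1\cup Z_2$), is what yields the lower bound $P\geq c\bigl(Q(iD)+|q|\bigr)$, hence compact resolvent, a simple ground state, and Agmon decay of the form $e^{-c|q|^{3/2}}$ for all correctors. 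Without identifying this mechanism the spectral gap claim is unsupported, and the inductive construction of quasimodes also needs these decay estimates (not merely simplicity of the model ground state) to guarantee that each corrector remains in $L^2$ after application of the polynomially growing operators $A_j$.
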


An example of symbol with a \emph{simple crossing}, with dimensions $(1,1)$, is the following
function on $\R^{4}$:
\[
h:(q_1,q_2,p_1,p_2)\mapsto p_1^2+p_2^2+q_1^2q_2^2,\]
which reaches its minimum on the transverse union of two manifolds,
$\R\times \{0,0,0\}$ and $\{0\}\times \R\times \{0,0\}$, intersecting
at one point.

As in the case of Theorem $B$, the first eigenvector is more and more
squeezed as $N\to +\infty$. Note that the speed of convergence, and
the powers of $N$ involved in the expansions, differ between the two cases.

The question now arises of the inverse spectral
problem in our setting: given the high $N$ spectrum of a Toeplitz
operator, is one able to distinguish the geometry of the set on which
the Melin value $\mu$ is minimal?

\begin{theorem}
  Let $h\in C^{\infty}(M,\R)$ with $\min(h)=0$. There exist $0<c \leq
  C$, $\epsilon>0$
  and $N_0\geq 0$
  such that the following is true. Let $\mu_{\min}$
  be the infimum of the Melin value, and
  $N\geq N_0$.
\begin{enumerate}[label=\Alph*.]
  \item Any eigenfunction of $T_N(h)$ associated with an eigenvalue in the spectral
  window $[0,\mu_{\min}N^{-1}+\epsilon N^{-1}]$ is localised on
  a small neighbourhood of the set of minimal Melin value.

  \item For each regular miniwell with Melin value $\mu_{\min}$ and
  dimension $r$, for each sequence $(\Lambda_N)$ with
\[
  N^{-\frac 12+\epsilon}\leq \Lambda_N \leq \epsilon,
\]
in the
  spectral window $[0,N^{-1}(\mu_{\min}+\Lambda_N)]$, the
  number of orthogonal almost eigenfunctions of $T_N(h)$ supported on
  a small neighbourhood of the miniwell
  belongs to the interval $$\left[c(N^{\frac 12} \Lambda_N)^r,C(N^{\frac 12}\Lambda_N)^r\right].$$

  \item For each simple crossing with Melin value $\mu_{\min}$ and
  dimensions $(r,r)$,  for each sequence $(\Lambda_N)$ with
\[
N^{-\frac 13+\epsilon}\leq \Lambda_N \leq \epsilon,
\]
in the
  spectral window $[0,N^{-1}(\mu_{\min}+\Lambda_N)]$, the
  number of orthogonal almost eigenfunctions of $T_N(h)$ supported on
  a small neighbourhood of
  the crossing point belongs to the interval $$\left[c(N^{\frac 13}\Lambda_N)^{\frac{3r}{2}}\log(N^{\frac 13}\Lambda_N),C(N^{\frac 13}\Lambda_N)^{\frac{3r}{2}}\log(N^{\frac 13}\Lambda_N)\right].$$
  \end{enumerate}
\end{theorem}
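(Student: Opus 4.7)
The plan is to prove the three clauses of Theorem D separately, using the preceding Theorems A, B and C as black boxes and combining them with standard spectral-counting techniques (min-max for upper bounds, explicit orthogonal quasimodes for lower bounds).

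\textbf{Clause A} extends the localization part of Theorem A from the ground state to any eigenstate in the spectral window $[0,\mu_{\min}N^{-1}+\epsilon N^{-1}]$. I would use an IMS-type partition $1 = \chi_0^2 + \chi_1^2$ with $\chi_0$ supported near $\{h=0,\,\mu=\mu_{\min}\}$ and $\chi_1$ away from it. Outside the minimal-Melin set, the Toeplitz--Melin lower bound (already developed for Theorem A) yields $T_N(\chi_1^2 h) \geq N^{-1}(\mu_{\min} + 2c)T_N(\chi_1^2) - O(N^{-2})$ for some $c>0$. Pairing this with the eigenvalue equation for $u_N$ forces $\|T_N(\chi_1)u_N\|^2 = O(N^{-1})$ as soon as $\epsilon < c$, and a bootstrap using progressively sharper cutoffs together with the polynomial Hausdorff regularity hypothesis upgrades this tail estimate to $O(N^{-\infty})$.

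\textbf{Clauses B and C} follow a common two-step template. By Clause A and its refinement to neighbourhoods of individual minimal-Melin points, any orthogonal family of almost-eigenfunctions in the relevant spectral window decomposes, up to $O(N^{-\infty})$ tails, into contributions localized near individual miniwells or crossing points, so it suffices to count orthogonal quasimodes near each such singularity. Near a regular miniwell $P_0$, the normal form from Theorem B identifies the low-energy spectrum of $T_N(h)$ with that of an $r$-dimensional harmonic oscillator with frequencies $\omega_1,\ldots,\omega_r$ determined by the tangential Hessian of $\mu$ at $P_0$. Its eigenvalues below $N^{-1}(\mu_{\min}+\Lambda_N)$ correspond to $(k_1,\ldots,k_r)\in\N^r$ with $\sum \omega_j k_j \leq \Lambda_N N^{1/2}$, of which there are $\Theta((N^{1/2}\Lambda_N)^r)$ provided $N^{1/2}\Lambda_N\to\infty$ (hence the assumption $\Lambda_N\geq N^{-1/2+\epsilon}$). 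Near a simple crossing, the normal form from Theorem C reduces $T_N(h)$, after rescaling spatial variables by $N^{1/3}$, to a Simon-type model operator of the form $-\Delta + |q'|^2|q''|^2$ on $\R^{2r}$, whose spectral counting function below energy $E$ is of order $E^{3r/2}\log E$. Substituting $E = N^{1/3}\Lambda_N$ and tracking scalings gives the claimed interval. In both cases, the lower bound comes from explicit orthogonal quasimodes (Hermite eigenfunctions for Clause B, model eigenfunctions for Clause C, cutoff and lifted via the normal form), while the upper bound is obtained from min-max applied to the reduced model.

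\textbf{Main obstacle.} The most delicate step is Clause C: the logarithmic factor $\log(N^{1/3}\Lambda_N)$ is only marginally larger than any polynomial power, so the normal form reduction of Theorem C must be pushed to sufficiently high order in $N^{-1/6}$ and its remainder must be matched against the spectral count of the Simon-type model for the logarithm to appear in both the lower and upper bounds. The full expansion already established in the proof of Theorem C provides the necessary precision, but coordinating it with a Weyl-type count for $-\Delta + |q'|^2|q''|^2$ on $\R^{2r}$ (and its higher-dimensional analogues) is the main technical hurdle.
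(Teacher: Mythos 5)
Your overall architecture matches the paper's: part A by localization via the Melin machinery, parts B and C by reducing to a model operator through the normal forms and counting its eigenvalues, with min--max for one bound and quasimodes/test subspaces for the other. The identification of the scalings ($E=N^{1/2}\Lambda_N$ for a miniwell, $E=N^{1/3}\Lambda_N$ for a crossing) and of the counting functions ($E^r$ versus $E^{3r/2}\log E$) is correct. For part A the paper argues through pseudo-locality of the resolvent (Proposition \ref{prop:plr}) and the iteration $T_N(a)=T_N(h-N^{-1}\mu_{\min})^kR^kT_N(a)+O(N^{-\infty})$ rather than an IMS partition, but these are two dressings of the same Melin lower bound.

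The genuine gap is in the sentence ``the normal form from Theorem B identifies the low-energy spectrum of $T_N(h)$ with that of an $r$-dimensional harmonic oscillator'' (and its analogue for C). Theorems B and C only control the ground state and the first spectral gap; they say nothing about the window $[0,N^{-1}(\mu_{\min}+\Lambda_N)]$ when $\Lambda_N$ is allowed to be as large as a fixed constant $\epsilon$, where the count is of order $N^{r/2}$ and no perturbative expansion of individual eigenvalues is available. What is actually needed --- and what the paper spends Propositions \ref{prop:Weylreg} and \ref{prop:Weylcross} establishing --- is a two-sided \emph{quadratic form} comparison, uniform over all states localized near the well or the crossing, between $\langle u,hu\rangle-N^{-1}\mu(P_0)$ and the model operator plus the transverse oscillator $Q_F(0)(x,\xi)$. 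This forces one to control (i) the cubic remainders $O(|x,\xi,p|^3)$ of the normal form, (ii) the $q$-dependence of the transverse frequencies $\lambda_i(q)$ (the $aC(\langle v,Op_W^{N^{-1}}(|x|^2+|\xi|^2)v\rangle-N^{-1}(n-r)/2)$ error terms, absorbed by shrinking the localization radius $a$), and (iii) the transverse excited states, which are expelled from the window only because $\epsilon$ is taken below the spectral gap of $Q_F(0)(x,D)$. None of this is addressed in your plan, and it is the actual content of the proof; once the sandwich is in place, min--max reduces both bounds to the eigenvalue count for the model.

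Your ``main obstacle'' paragraph also points at the wrong difficulty. The logarithm in part C does not require pushing the normal form to high order in $N^{-1/6}$: it survives any two-sided bound $cP_0\leq P\leq CP_0$, because it comes from the Weyl law for the model operator (Proposition \ref{prop:Weyl}), where the paper obtains it from an explicit phase-space volume computation for $-\Delta+|q_1|^2|q_2|^2+|q_1|+|q_2|$. Note in this connection that the model operator arising from the normal form carries linear terms $L_1\cdot q_1+L_2\cdot q_2$ coming from the gradient of the subprincipal symbol along the two branches (guaranteed nontrivial by the last condition of Definition \ref{defn:simple-crossing}); these make the classical sublevel sets have finite volume and yield the $\log$ by an ordinary Weyl law, so one does not need to invoke non-classical (Simon-type) asymptotics for the pure quartic operator as you propose, although that route would give the same order of growth.
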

The notion of dimension of a miniwell and a simple crossing can be
found in Definition \ref{defn:dim}.
In Theorem D, case A is a generalisation of
Theorem A. Cases B and C apply respectively in the settings of
Theorems B and C.

\begin{rem}
  If $\Lambda_N<N^{-\epsilon}$, then there are more eigenvalues near a
  miniwell than near a crossing point (the ratio is of order
  $N^{\frac{\epsilon}{2}})$. If we look at eigenvalues in such
  windows, then a miniwell of dimension $r$ not only ``hides''
  miniwells of smaller dimensions, but also crossing points of
  dimensions up to and including $(r,r)$.

  If $\Lambda_N>\frac{\epsilon}{2}$, then there are more
  eigenvalues near a crossing point than near a miniwell (the ratio is
  of order $\log(N)$). In these windows, crossing points hide
  miniwells of dimension smaller or equal.

  In particular, this proves that the spectral inverse problem allows,
  not only to recover the value of $\mu_{\min}$, but also to determine
  the largest dimensions of the miniwells or crossing points achieving
  $\mu_{\min}$, and to tell whether there are only miniwells, only
  crossing points, or both.
\end{rem}
Theorem D also allows to study low-temperature quantum
states for a model on which there is a competition between a regular
point and a crossing point with the same $\mu$. It shows a transition
from temperature ranges similar to $N^{-1}$, for which the Gibbs
measure concentrates on the crossing point, and temperature ranges of
order $N^{-1-\epsilon}$, for which this measure concentrates on the
regular point.

% beyond
In this work only rapid decay estimates are
obtained: quantities are controlled modulo an $O(N^{-\infty})$ error
as $N\to +\infty$. The natural question of exponential decay
\cite{helffer_puits_1986} requires
refined estimates on the Szeg\H{o} kernel which are currently
unknown for general compact K\"ahler manifolds.

The study of the function $\mu$ on examples of spin systems requires the full
diagonalisation of matrices which size grows with the number of
sites. Few theoretical results are known in this setting (see Section
8). The general conjecture is that $\mu$ should reach a minimum on
planar configurations; up to now this is only supported by numerical
evidence and the fact that planar configurations are local minima for $\mu$.

\subsection{Application to spin systems}
\label{sec:appl-spin-syst}

One of the main physical motivations for this study, discussed in
detail in Section 8, is the
mathematical foundation of \emph{quantum selection} in the context of
spin systems. The search for materials with a non-conventional
magnetic behaviour led experimental and theoretical physicists to consider frustrated
antiferromagnetic spin systems, such as pyrochlore or the Kagome
lattice. \emph{Order by disorder} approaches in the large spin limit
are commonly used in the physics literature, and the subprincipal effects presumably select a
very small subset of configurations
\cite{doucot_semiclassical_1998,sobral_order_1997,lecheminant_order_1997,reimers_order_1993,chubukov_order_1992}.

Spin systems are particular cases of Toeplitz operators. In such systems the base manifold is a product of
$2$-spheres. Let $G=(V,E)$ be a finite graph and $M=(\S^2)^{\times
  |V|}$. At each vertex of the graph one associates a unit vector in
$\R^3$. Let us consider the $3|V|$ real functions associating to a
given vertex $i\in V$, the coordinates $x_i,y_i,z_i$ of the associated
unit vector $e_i$. The symplectic structure on $M$ is such that
$\{x_i,y_j\}=\delta_{ij}z_i$; two similar identities hold by cyclic
permutation. We introduce the \emph{antiferromagnetic Heisenberg}
symbol:

$$\begin{matrix}
  h:&M&\mapsto& \R\\
  &(e_i)_{i\in V}&\mapsto& \sum\limits_{(i,j)\in E}x_ix_j+y_iy_j+z_iz_j.\end{matrix}$$

The classical minimum of this function corresponds to situations where
the sum of the scalar products between neighbouring vectors is the
smallest. If $G$ is bipartite, this minimum is reached in situations
where neighbouring vectors are opposite. In \emph{frustrated systems},
this is not possible. If for instance three vertices in the graph are
linked with each other, then not all of them can be opposite to the
other ones. This is the case of the Kagome lattice, and the
Husimi tree, considered in \cite{doucot_semiclassical_1998} and
depicted in Figure \ref{fig:Huskag}.

We will consider a class of graphs \emph{made of triangles}. A finite
connected graph
$G=(E,V)$ is \emph{made of triangles} when there is a partition $V=\bigsqcup_{i\in J}V_i$
where, for every $i$, $V_i$ contains three edges that link together
three vertices; in addition, we ask that the degree at any vertex does not
exceed 4 (and is hence equal to either $2$ or $4$). We will call the $V_i$'s
the triangles of the graph.

Finite subgraphs of the Kagome lattice and the Husimi tree of Figure \ref{fig:Huskag} are
made of triangles. In general, from a $3$-regular finite graph $G=(V,E)$, one
can build an associated graph made of triangles
$\widetilde{G}=(\widetilde{V},\widetilde{E})$ which is the \emph{edge graph} of $G$: the set of
vertices is $\widetilde{V}=E$ and two elements
of $\widetilde{V}$ are adjacent in $\widetilde{G}$ when they are adjacent as
edges of $G$ (i.e. when they share a common vertex). In this case the
triangles of $\widetilde{G}$ correspond to the vertices of $G$. The Kagome
lattice is thus associated with the hexagonal lattice, and the
infinite Husimi
tree with the $3$-regular tree.

\begin{figure}
   \centering
   \includegraphics[scale=0.1]{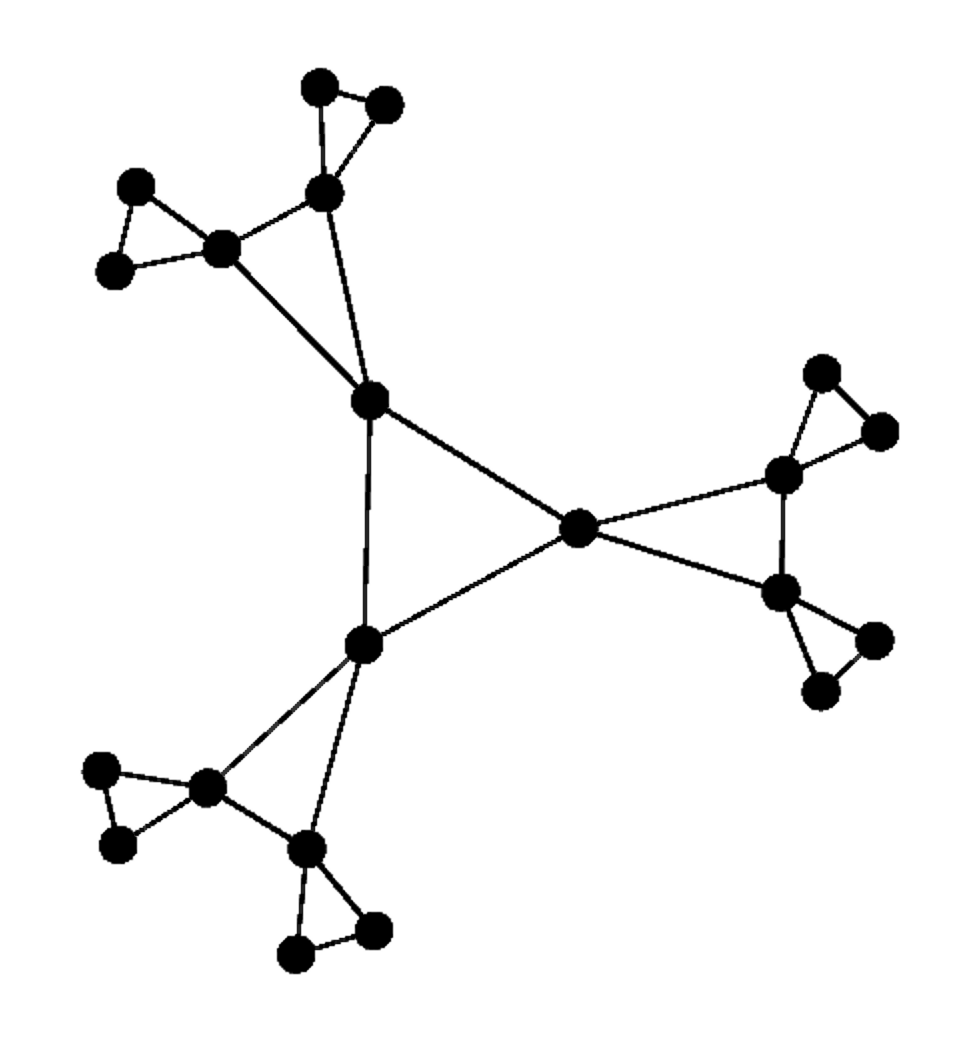}
   \hspace{2em}
   \includegraphics[scale=0.06]{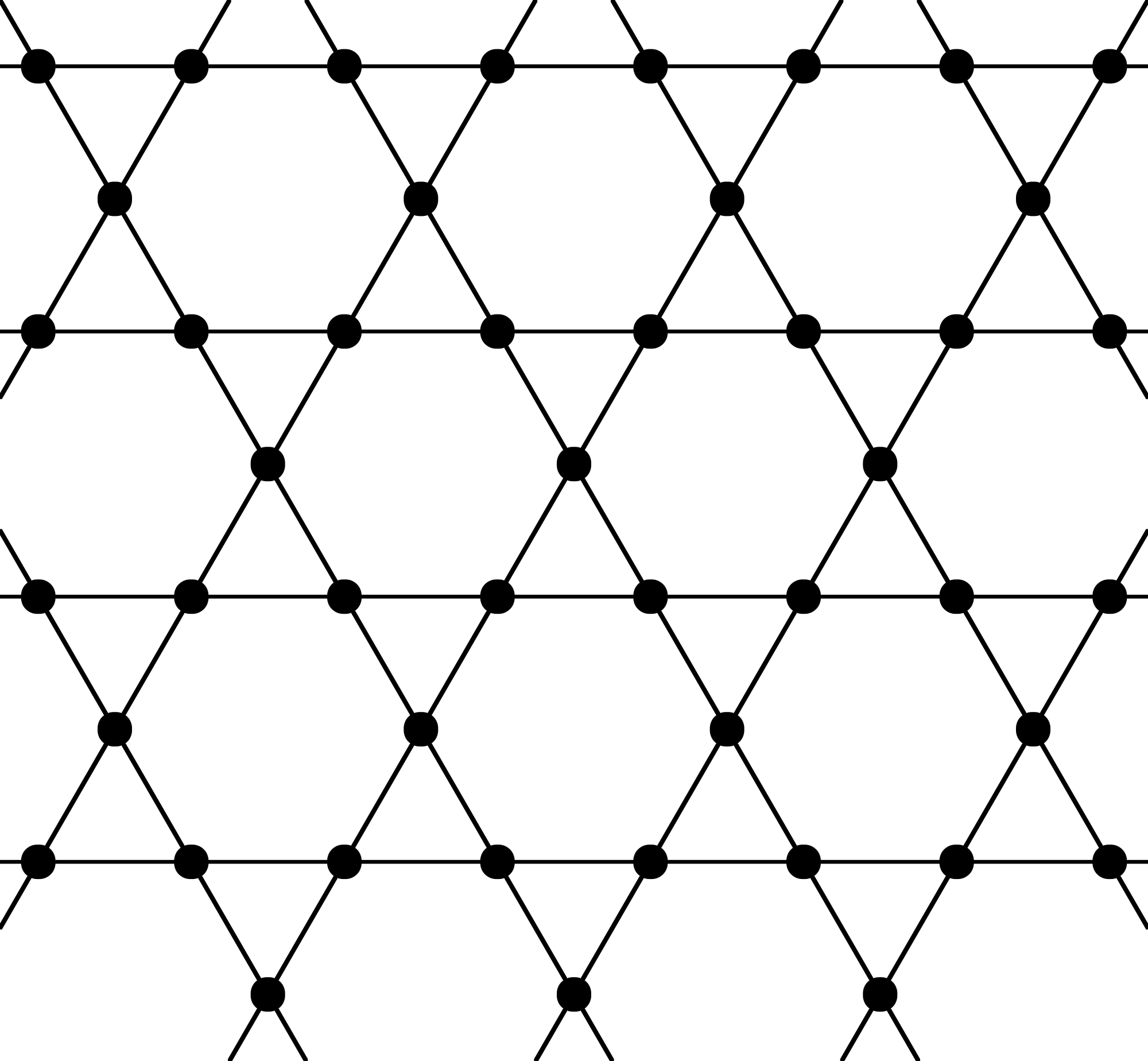}
   \hspace{2em}
   \includegraphics[scale=0.3]{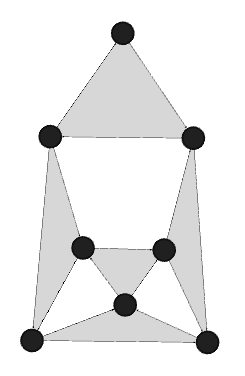}   
   \caption{Main examples: a piece of the Husimi tree (left), and the Kagome
     lattice (middle). On the right, a graph made of $5$ triangles on
     which the symbol cannot reach $-15/2$.}
   \label{fig:Huskag}
 \end{figure}

 The presence of the ``frustration'' by triangles leads to a large
 degeneracy of the classical minimal set. Indeed, $h$ is minimal if,
 on each triangle $V_i$, the sum of the three spins at the vertices of
 the triangle is zero (so that these elements of $\S^2$ must form a
 great equilateral triangle). This is not always possible as the
 example to the right of Figure \ref{fig:Huskag} shows. Those
 configurations exist on subsets of the Husimi tree and the Kagome
 lattice, and are highly degenerate: on the Husimi tree, once the
 spins on a triangle are chosen, there is an $\S^1$ degeneracy for
 each of its children; the set of minimal configurations is an
 isotropic torus whose dimension grows linearly with the number of
 triangles. On the Kagome lattice, the set of these configurations
 does not form a smooth submanifold, hence the need for Theorems A and
 C. It is currently unknown which minimal points of $h$ achieve
 $\mu_{\min}$.

 \begin{prop}
   For a loop of 6 triangles (the basic element of the Kagome
   lattice), the minimal set is not a smooth manifold.

   For a loop of 4 triangles, the minimal set is the direct product of
   $SO(3)$ and the union of three circles, two of each having
   transverse intersection at exactly one point. Planar configurations
   are local minima for $\mu$.
 \end{prop}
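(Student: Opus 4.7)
The plan is to parametrize $\M_n$, the minimum set of $h$ on a loop of $n$ triangles, explicitly. On each triangle, $h$ is minimal exactly when the three attached spins sum to zero. For a loop with shared-vertex spins $v_1,\ldots,v_n$ indexed cyclically, the free-vertex spins are then determined by $f_i = -v_i - v_{i+1}$ (automatically of unit norm), and the condition on each triangle becomes $v_i \cdot v_{i+1} = -1/2$. Thus
\[
\M_n \;=\; \{(v_1,\ldots,v_n) \in (\S^2)^n : v_i\cdot v_{i+1} = -\tfrac{1}{2},\; i \bmod n\}.
\]
The diagonal $SO(3)$-action is free wherever the spins span $\R^3$, so $\M_n$ locally factors as $SO(3)\times (\M_n/SO(3))$.

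For $n=4$, I would use $SO(3)$ to fix $v_1 = e_3$ and $v_2 = -\tfrac{1}{2} e_3 + \tfrac{\sqrt{3}}{2}e_1$, then parametrize $v_3$ by its rotation angle $\alpha$ around $v_2$ and $v_4$ by its angle $\beta$ around $v_1$. The remaining closing condition $v_3 \cdot v_4 = -1/2$ simplifies, via half-angle identities, to the product
\[
\sin(\alpha/2)\,\sin(\beta/2)\,\cos\!\bigl((\alpha-\beta)/2\bigr) = 0,
\]
whose zero set in the $(\alpha,\beta)$-torus is the union of three circles $C_0 = \{\alpha = 0\}$, $C_1 = \{\beta = 0\}$ and $C_2 = \{\alpha - \beta = \pi\}$, meeting at the three distinct points $(0,0)$, $(0,\pi)$, $(\pi,0)$ with pairwise distinct tangent directions, hence pairwise transversely. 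Lifting by the $SO(3)$-bundle gives the claimed product structure of $\M_4$.

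For $n=6$, I would exhibit a singular point at the alternating planar configuration $(v_1,v_2,v_1,v_2,v_1,v_2)$ with $v_1 \cdot v_2 = -1/2$. Linearizing the six constraints in a frame adapted to the plane containing $v_1,v_2$ splits each variation into an in-plane component $a_i$ and an out-of-plane component $b_i$; the constraints reduce to $a_i + a_{i+1} = 0$ for all $i \bmod 6$, which on a cycle of length six admits the nontrivial one-parameter solution $a_i = (-1)^{i-1} a_1$ while the six $b_i$ remain unconstrained. The Zariski tangent space thus has dimension $7$, strictly larger than the generic dimension $6$ of $\M_6$, proving non-smoothness. The claim that planar configurations are local minima of $\mu$ reduces, on each smooth branch of $\M_n$ where Theorem B applies, to showing that the first variation of $\mu$ along a non-planar deformation vanishes (a consequence of the reflection symmetry across the plane at a planar configuration) and that the second variation is strictly positive.

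The main obstacle is the computation of this second variation: the transverse Hessian of $h$ couples neighbouring triangles, and extracting its symplectic spectrum cleanly enough to determine the sign of the second variation requires exploiting the cyclic translational symmetry at the planar configuration to block-diagonalize the Hessian via a discrete Fourier transform along the cycle, then controlling the perturbative expansion of the symplectic eigenvalues under the tilting deformation.
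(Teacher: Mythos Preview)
Your approach to the six-triangle case is valid and different from the paper's. The paper quotients by $SO(3)$ by fixing two inner spins, embeds the remaining four inner spins in a four-torus, observes that the closing constraint is a nontrivial real-analytic equation (so the set has dimension $\leq 3$), and then exhibits four independent deformations at a specific planar configuration. Your argument instead computes the Zariski tangent space at the alternating configuration $(v_1,v_2,v_1,v_2,v_1,v_2)$ directly in $(\S^2)^6$, which is cleaner linear algebra. One point you should make explicit: the claim ``generic dimension $6$'' needs the six constraint differentials to be independent at \emph{some} point of $\M_6$ (otherwise a $7$-dimensional Zariski tangent proves nothing); this is easy to check at, say, the $(A,B,C,A,B,C)$ planar configuration with $A,B,C$ pairwise at angle $2\pi/3$.

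For the four-triangle case your setup is right but the trigonometric factorisation is wrong. With $v_3=R^{v_2}_\alpha(v_1)$ and $v_4=R^{v_1}_\beta(v_2)$ one finds (via Rodrigues' formula) that $v_3\cdot v_4=-\tfrac12$ is equivalent to
\[
(\cos\alpha-1)(\cos\beta-1)=2\sin\alpha\sin\beta,
\]
which in half-angles becomes
\[
\sin(\alpha/2)\,\sin(\beta/2)\,\bigl[\sin(\alpha/2)\sin(\beta/2)-2\cos(\alpha/2)\cos(\beta/2)\bigr]=0,
\]
not $\sin(\alpha/2)\sin(\beta/2)\cos((\alpha-\beta)/2)=0$. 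Your $C_2=\{\alpha-\beta=\pi\}$ is a geodesic circle; the true third branch $C_2=\{\tan(\alpha/2)\tan(\beta/2)=2\}$ is a different smooth closed curve. By a lucky accident your intersection points $(0,0),(0,\pi),(\pi,0)$ are nevertheless correct (set $\alpha=0$ in the true third factor to get $-2\cos(\beta/2)=0$, hence $\beta=\pi$), and transversality still holds, so the structural conclusion survives --- but the computation should be fixed. The paper's treatment of this case is more pictorial, identifying the two one-parameter families of configurations directly (see its Figure for the loop of four triangles) rather than via an explicit angular equation.

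For the local-minimum claim on $\mu$, the paper does exactly what you propose for the first variation (the reflection/parity symmetry across the plane) and then does \emph{not} carry out the second-variation analysis you outline: it computes the Hessian of $h$ explicitly in adapted coordinates, notes that the resulting $\mu$ requires a full diagonalisation with no evident closed form, and relies on a numerical plot of $\mu(\theta)$ to confirm the local minimum. So your proposed Fourier-block-diagonalisation would be an improvement over the paper if you can push it through, but be aware that the paper itself regards this step as intractable analytically.
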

 The proof is presented in Section 8.

\subsection{Outline}
In Section 2 we recall the necessary material on Toeplitz operators,
including a universality lemma proved in \cite{deleporte_low-energy_2016}, and
quantum maps as developed in \cite{charles_semi-classical_2007}. In
this section we also define and study the Melin value.

Section 3 contains the main tool in the proof of Theorem A, which is a Toeplitz
version of the Melin estimates \cite{melin_lower_1971,hormander_analysis_2007}. We give a global and a local version
of these estimates, and use them to prove pseudolocality of the
resolvent at a distance $\geq \varepsilon N^{-1}$ of the spectrum, for
every $\varepsilon>0$.

Section 4 concludes the proof of Theorem A, based on the Melin estimate.

Sections 5 and 6 respectively contain the proofs of Theorems B and C,
following the same strategy. We first find a convenient
symplectic normal form, then use quantum maps to reduce
the problem to a normal form, then we use a particular perturbative
argument to exhibit, for every $k\in \N$, an approximate eigenvector up to $O(N^{-k})$.

In Section 7 we prove Theorem D, using estimates developed in the
previous sections, and especially Proposition \ref{prop:Weylreg} and
Proposition \ref{prop:Weylcross}.

Section 8 contains a discussion on frustrated spin systems.

%%% Local Variables:
%%% mode: latex
%%% TeX-master: "Article"
%%% End:

\section{Toeplitz operators}
Toeplitz operators are a generalisation of the Bargmann-Fock point of
view on the quantum harmonic oscillator
\cite{bargmann_hilbert_1961}. They realise a \emph{quantization} on
some symplectic manifolds, and are a particular case of geometric
quantization
\cite{kostant_quantization_1970,souriau_quantification_1967}. Another
particular case of geometric quantization is Weyl quantization which
leads to pseudodifferential operators.
Toeplitz operators were first studied from a microlocal point of view
\cite{boutet_de_monvel_spectral_1981,boutet_de_monvel_sur_1975}, and
the study of the Szeg\H{o} kernel was further motivated by
geometrical applications
\cite{demailly_holomorphic_1991,zelditch_szego_2000}. Here we directly
use the semiclassical point of view developed in
\cite{shiffman_asymptotics_2002,charles_berezin-toeplitz_2003,ma_holomorphic_2007}.

In this Section we recall the properties of Toeplitz operators, and we
refer to earlier work on the topic
\cite{deleporte_low-energy_2016,berman_direct_2008,ma_holomorphic_2007,charles_berezin-toeplitz_2003,shiffman_asymptotics_2002,woodhouse_geometric_1997,bargmann_hilbert_1961}
for the proofs of the exposed facts.
\subsection{Hardy spaces and the Szeg\H{o} projector}
If a symplectic manifold (phase space) $M$ has a complex structure,
the idea behind Berezin-Toeplitz operators is to consider quantum
states as holomorphic functions. If $M$ is compact,
holomorphic functions on $M$ are all constant, so that one needs to
consider sections of a convenient line bundle over $M$ or,
by duality, holomorphic functions on a dual line bundle.

Let $M$ be a K\"ahler manifold of dimension $n$, with symplectic form
$\omega$. If the Chern class of $\omega/2\pi$ is integral, there
exists a Hermitian holomorphic line bundle $(L,h)$ over $M$, with
curvature $-i\omega$ (\cite{woodhouse_geometric_1997}, pp. 158-162).

Let $(L^*,h^*)$ be the dual line bundle of $L$, with dual metric. Let $D$ be the unit ball of $L^*$, that is:
$$\{D=(m,v)\in L^*, \|v\|_{h^*}<1\}.$$ The boundary of $D$ is denoted
by $X$. It is a circle bundle over $M$, with projection $\pi$ and an $\S^1$ action
\begin{align*}
r_{\theta}: \quad X\quad &\mapsto \quad X\\(m,v)&\mapsto (m,e^{i\theta}v).
\end{align*}

$X$ inherits a Riemannian structure from $L^*$ so that $L^2(X)$ is
well-defined. We are interested in the equivariant Hardy spaces on $X$, defined as follows:
\begin{defn}\label{defn:szego}$\,$
	\begin{itemize}
		\item The \emph{Hardy space} $H(X)$ is the closure in $L^2(X)$ of $$\{f|_X,\,f\in C^{\infty}(D\cup X),\, f\text{ holomorphic in $D$}\}.$$
	
	\item The Szeg\H{o} projector $S$ is the orthogonal projection from $L^2(X)$ onto $H(X)$.
	
	\item Let $N\in \mathbb{N}$. The \emph{equivariant Hardy space} $H_N(X)$ is:
	$$H_N(X)=\{f\in H(X),\,\forall (x,\theta)\in X\times \S^1,\,f(r_{\theta}x)=e^{iN\theta}f(x)\}.$$
	
	\item The equivariant Szeg\H{o} projector $S_N$ is the orthogonal projection from $L^2(X)$ onto $H_N(X)$.
	\end{itemize}
      \end{defn}

      Throughout this paper, we will work with the sequence of spaces
$(H_N(X))_{N\in \N}$. If $M$ is compact, then the spaces $H_N(X)$ are
finite-dimensional spaces of smooth functions. (Note, however, that
this dimension grows polynomially with $N$.) Hence, the Szeg\H{o}
projector has a Schwartz kernel, that we will also denote by $S_N$.

\begin{ex}[The sphere]\label{ex:CP1}
  The sphere $\S^2$ has a canonical K\"ahler structure as
  $(\C\mathbb{P}^1,\omega_{FS})$, which is quantizable. Here $D$ is
  the unit ball in $\C^2$, blown up at zero, and $X=\S^3$. One
  recovers the usual $\S^1$ free action on $\S^3$ with quotient
  $\S^2$.

  Here, $H_N(X)$ is the space of homogeneous polynomials of two
  complex variables, of degree $N$, with Hilbert structure the scalar
  product of the restriction to $X$ of these polynomials. A natural
  Hilbert basis corresponds to the normalized monomials
  \[(z_1,z_2)\mapsto \sqrt{\frac{N+1}{\pi}\binom{N}{k}}z_1^kz_2^{N-k}.\]
  In particular, the Szeg\H{o} projector has kernel
  \[
    S^{\C\mathbb{P}_1}_N:(z,w)\mapsto \cfrac{N+1}{\pi}(z\cdot
    \overline{w})^N.
    \]
\end{ex}

\begin{ex}[$\C^n$]
  Another important example (though non compact) is the case $M=\C^n$,
with standard K\"ahler form. As $\C^n$ is contractile, the bundle $L$
is trivial, but the metric $h$ is not. The curvature condition yields:
\[
(L,h)=\left(\C^{n}_z\times \C_v, e^{|z|^2}|v|^2\right).
\]
This leads to the following identification \cite{bargmann_hilbert_1961}:
	$$H_N(X)\simeq B_N:=L^2(\C^n)\cap \left\{z\mapsto e^{-\frac N2|z|^2}f(z),\, f\text{ is an entire function}\right\}.$$
	
	The space $B_N$ is a closed subspace of $L^2(\C^n)$. The orthogonal projector $\Pi_N$ from $L^2(\C^n)$ to $B_N$ admits as Schwartz kernel the function
	$$\Pi_N:(z,w)\mapsto
        \left(\cfrac{N}{\pi}\right)^n\exp\left(-\frac 12
          N|z-w|^2+iN\Im(z\cdot \overline{w})\right).$$
        As the case $M=\C^n$ is of particular interest, we will keep
       separate notations for the Szeg\H{o} kernel in this case, which will
        always be denoted by $\Pi_N$.
\end{ex}

The sequence of kernels $(\Pi_N)_{N\geq 1}$ is rapidly decreasing outside the
diagonal set. This key property also holds in the case of a compact K\"ahler manifold:

\begin{prop}[\cite{ma_holomorphic_2007}, prop 4.1, or \cite{charles_berezin-toeplitz_2003,shiffman_asymptotics_2002,berman_direct_2008}]\label{prop:ma4.1}
	Let $M$ be a compact K\"ahler manifold, and $(S_N)_{N\geq 1}$
        be the sequence of Szeg\H{o} projectors of Definition
        \ref{defn:szego}. Let $\delta \in [0,1/2)$. For every $k\geq
        0$ there exists $C$ such that, for every $N\geq 1$, for every $x,y\in X$ such that $\dist(\pi(x),\pi(y))\geq N^{-\delta}$, one has $$|S_N(x,y)|\leq CN^{-k}.$$
\end{prop}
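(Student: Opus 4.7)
The plan is to deduce the statement from the near-diagonal asymptotic description of the Szeg\H{o} kernel, which is precisely the content of the cited references. The Boutet de Monvel--Sj\"ostrand parametrix (or equivalently the Ma--Marinescu expansion) provides, on a fixed open neighbourhood $\mathcal{V}$ of the diagonal of $X\times X$, a representation of the form
\[
S_N(x,y) = N^n\, e^{i N \psi(x,y)}\, a(x,y,N) + R_N(x,y),
\]
where $\psi$ is a smooth phase satisfying $\Im \psi(x,y) \geq c_0\, \dist(\pi(x),\pi(y))^2$ for some $c_0 > 0$, the amplitude $a(\cdot,\cdot,N)$ is uniformly bounded together with all its derivatives in $N$, and the remainder $R_N$ is $O(N^{-\infty})$ in $C^\infty$-norm. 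Outside of $\mathcal{V}$, the Szeg\H{o} kernel is itself $O(N^{-\infty})$ uniformly: the wavefront set of $S$ is contained in the diagonal of the positive symplectic cone, and the Fourier decomposition in the $\S^1$-direction supplies the semiclassical parameter $N$.

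First, I would split into the two regimes. If $(x,y)\notin \mathcal{V}$, the conclusion is immediate from the global rapid-decay statement. So assume $(x,y)\in \mathcal{V}$ with $\dist(\pi(x),\pi(y)) \geq N^{-\delta}$. Plugging into the parametrix,
\[
|S_N(x,y)| \leq C\, N^n \exp\!\bigl(-c_0\, N\, \dist(\pi(x),\pi(y))^2\bigr) + O(N^{-\infty}) \leq C\, N^n \exp\!\bigl(-c_0\, N^{1-2\delta}\bigr) + O(N^{-\infty}).
\]
Since $\delta < 1/2$, we have $1-2\delta > 0$, so $\exp(-c_0 N^{1-2\delta})$ decays faster than any negative power of $N$; in particular, it absorbs the polynomial prefactor $N^n$ and, for each $k$, yields a constant $C_k$ (depending on $k$ and $\delta$ but not on $x,y,N$) such that $|S_N(x,y)| \leq C_k N^{-k}$.

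The main obstacle is not in this short deduction but in justifying the ingredient invoked as a black box: producing the quadratic positivity $\Im \psi \gtrsim \dist^2$ of the parametrix phase is the genuine analytic content, and it is the one established in \cite{ma_holomorphic_2007,charles_berezin-toeplitz_2003,shiffman_asymptotics_2002,berman_direct_2008}. At the level of this proposition, the only remaining point to verify is uniformity of the constants in $(x,y)$ and $N$, which is automatic from the compactness of $M$ (hence of $X$) and the uniform nature of the parametrix.
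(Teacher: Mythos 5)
The paper states this proposition as a result quoted from the cited references (Ma--Marinescu, Charles, Shiffman--Zelditch, Berman et al.) and gives no proof of its own, so the only comparison available is with the standard argument in those references, which is exactly what you reproduce. Your two-regime splitting (global off-diagonal rapid decay versus the near-diagonal parametrix with $\Im\psi\geq c_0\,\dist(\pi(x),\pi(y))^2$), together with the observation that $N^n e^{-c_0 N^{1-2\delta}}=O(N^{-\infty})$ since $1-2\delta>0$ and that uniformity follows from compactness, is correct and is essentially the proof given in the literature.
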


This roughly means that, though the operators $S_N$ are non-local, the ``interaction range'' shrinks with $N$.

In the spirit of Proposition~\ref{prop:ma4.1}, we define what it means
for a sequence of functions in $H_N(X)$ to be localised on a set.
\begin{defn}\label{defn:loc}
	Let $u=(u_N)_{N\geq 1}$ be a sequence of unit elements of $L^2(X)$. Let $\dd Vol$ denote the Liouville volume form on $M$. For every $N$, the probability measure $|u_N|^2\dd Vol \otimes \dd \theta$ is well-defined on $X$, and we call $\mu_N$ the push-forward of this measure on $M$.
	
	Let moreover $Z\subset M$ be compact. We will say that the sequence $u$
        localises on $Z$ when, for every open set $U \subset M$
        at positive distance from $Z$, one has, as $N\to +\infty$:
	$$\mu_N(U)=O(N^{-\infty}).$$
\end{defn}

A corollary of this definition is that, if a sequence $(u_N)_{N\geq
  1}$ localises on a set $Z$, then so does the sequence
$(S_Nu_N)_{N\geq 1}$.

\begin{rem}
  Elements in the Hardy space are functions on the whole phase
  space. Hence, Definition \ref{defn:loc} corresponds to
  \emph{micro}localisation for elements of $L^2(\R^n)$.
\end{rem}

To complete Proposition \ref{prop:ma4.1}, we have to describe how
$S_N$ acts on sequences of functions concentrated on a point. For this we need a convenient choice of coordinates.

Let $P_0\in M$. The real tangent space $T_{P_0}M$ carries a Euclidian
structure and an almost complex structure coming from the K\"ahler
structure on $M$. Then, we can (non-uniquely) identify $T_{P_0}M$ with
$\C^n$ endowed with the standard metric.

The map $\exp_M:T_{P_0}M\mapsto M$ on the Riemannian manifold $M$,
together with the identification $\C^n\simeq T_{P_0}M$, leads to the
notion of normal coordinates:
\begin{defn}\label{defn:normap}
	Let $U$ be a neighbourhood of $0$ in $\C^n$ and $V$
	be a neighbourhood of a point $P_0$ in $M$.
	
	A smooth diffeomorphism $\rho:U\times \S^1\to \pi^{-1}(V)$ is said to be a
	\emph{normal map} or a map of \emph{normal coordinates} if it
        satisfies the following conditions:
	\begin{itemize}
		\item $\forall (z,v)\in U\times \S^1,\,\forall \theta \in \R,\,\rho(z,ve^{i\theta})=r_{\theta}\rho(z,v)$
		\item Identifying $\C^n$ with $T_{P_0}M$ as previously, one has: $$\forall (z,v)\in U\times \S^1,\,\pi(\rho(z,v))=\exp_M(z).$$ 
	\end{itemize}
\end{defn}

\begin{rem}
	The choice of a normal map around a point $P_0$ reflects the choice of an identification of $\C^n$ with $T_{P_0}(M)$ and a point over $P_0$ in $X$. Hence, if $\rho_1$ and $\rho_2$ are two normal maps around the same point $P_0$, then $\rho_1^{-1}\circ \rho_2\in U(n)\times U(1)$.
\end{rem}

Using Definition \ref{defn:normap}, one can compare, for $N$ large,
the Szeg\H{o} kernel $S_N$ with the flat case $\Pi_N$. For this, we push by
$\rho$ the Bargmann kernel and multiply by the correct factor in the
fibre to obtain an equivariant kernel on $X$:
$$\rho^*\Pi_N(\rho(z,\theta),\rho(w,\phi)):=e^{iN(\theta-\phi)}\Pi_N(z,w).$$

By convention, $\rho^*\Pi_N$ is zero outside $\pi^{-1}(V)^2$.

\begin{prop}[\cite{deleporte_low-energy_2016}]\label{prop:universal}
	Let $P_0\in M$, and $\rho$ a normal map around $P_0$. For every $\epsilon>0$ there exists $\delta 
	\in (0,1/2)$ and $C>0$ such that for every $N\in \N$, for every $u\in L^2(X)$, if the support of $u$ lies inside $\rho(B(0,N^{-\delta})\times \S^1)$, then $$\|(S_N-\rho^*\Pi_N)u\|_{L^2}<CN^{-\frac 12 + \epsilon}\|u\|_{L^2}.$$
\end{prop}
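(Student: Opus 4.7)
The plan is to compare $S_N$ with the Bargmann kernel $\Pi_N$ via the universal near-diagonal asymptotic expansion of the Szeg\H{o} kernel in normal coordinates, and then convert the resulting pointwise comparison into an $L^2$ operator estimate via Schur's test.

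First, I would use Proposition~\ref{prop:ma4.1} to localise the problem. Since $u$ is supported in $\rho(B(0,N^{-\delta})\times\S^1)$, the rapid off-diagonal decay of $S_N$ shows that the restriction of $S_Nu$ to points outside $\pi^{-1}(B(P_0,2N^{-\delta}))$ has $L^2$ norm $O(N^{-\infty})\|u\|_{L^2}$. The analogous statement for $\rho^*\Pi_N u$ is immediate from the explicit Gaussian decay of $\Pi_N$ at scale $N^{-1/2}$. Hence it suffices to bound $S_N-\rho^*\Pi_N$ as an integral operator when both arguments lie in the chart of the normal map $\rho$, where the comparison with the flat model is meaningful.

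Second, I would invoke the standard near-diagonal Szeg\H{o} expansion (see for instance \cite{ma_holomorphic_2007,charles_berezin-toeplitz_2003}): for any $J\geq 1$ and in normal coordinates,
\begin{equation*}
S_N(\rho(z,\theta),\rho(w,\phi)) = e^{iN(\theta-\phi)}\,\Pi_N(z,w)\Bigl(1+\sum_{j=1}^{J}N^{-j/2}b_j(z,w,\sqrt{N}(z-w))\Bigr) + R_N^{J}(z,\theta,w,\phi),
\end{equation*}
where each $b_j$ is polynomial in the quantum variable $\sqrt{N}(z-w)$ (of some degree $d_j$) with smooth coefficients in $(z,w)$, and $|R_N^J|\leq C_J N^{-(J+1)/2}|\Pi_N(z,w)|$ uniformly on a neighbourhood of $(0,0)$. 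In the regime $|z|,|w|\leq 2N^{-\delta}$ one has $\sqrt{N}|z-w|\leq 4N^{1/2-\delta}$, so the contribution of the term of index $j$ is pointwise bounded by $CN^{-j/2+d_j(1/2-\delta)}|\Pi_N(z,w)|$. Choosing $\delta\in(0,1/2)$ close enough to $1/2$ so that $d_j(1/2-\delta)<\epsilon$ for all $j\leq J$, and then $J$ large enough that $(J+1)/2>1/2$, the dominant correction comes from $j=1$ and is of pointwise size $CN^{-1/2+\epsilon}|\Pi_N(z,w)|$, up to a polynomial factor in $\sqrt{N}|z-w|$ of controlled degree.

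Third, I would apply Schur's test to convert this pointwise estimate into the $L^2$ operator bound. The Gaussian factor $e^{-N|z-w|^2/2}$ in $|\Pi_N(z,w)|$ guarantees that $\int|\Pi_N(z,w)|\dd w$ and $\int|\Pi_N(z,w)|\dd z$ are uniformly bounded in $N$, and the same holds when $|\Pi_N|$ is multiplied by any polynomial in $\sqrt{N}(z-w)$, since these reduce to moments of a rescaled Gaussian on $\C^n$. This delivers $\|(S_N-\rho^*\Pi_N)u\|_{L^2}<CN^{-1/2+\epsilon}\|u\|_{L^2}$, as required. The main obstacle is the bookkeeping of scales: $\delta$ must be chosen close enough to $1/2$ to dampen the polynomial growth in $\sqrt{N}(z-w)$ of all the correction terms $b_j$ with $j\leq J$, yet $\delta$ itself feeds back into the off-diagonal localisation step at the start, so one has to verify that the same $\delta$ is admissible for both. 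This is routine once the Szeg\H{o} expansion is available with explicit polynomial degrees and pointwise remainder estimates.
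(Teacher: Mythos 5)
Your proposal is correct and follows essentially the same route as the paper: the paper does not reprove this proposition but cites \cite{deleporte_low-energy_2016}, where it is derived exactly as you describe, from the off-diagonal decay of $S_N$ (Proposition \ref{prop:ma4.1}) combined with the near-diagonal scaling asymptotics of the Szeg\H{o} kernel of \cite{shiffman_asymptotics_2002,ma_holomorphic_2007,charles_berezin-toeplitz_2003}, followed by a Schur-type bound. Your bookkeeping of the scales (choosing $\delta$ close to $1/2$ so that the polynomial growth of the correction terms in the rescaled variables is absorbed into $N^{\epsilon}$) is precisely the point of the statement, and it goes through as you indicate.
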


In a sense, Proposition \ref{prop:universal} states that the operator
$S_N$ asymptotically looks like $\Pi_N$ on small scales. The proof can
be found in a previous paper \cite{deleporte_low-energy_2016}, as a
consequence of previously known results on the asymptotical behaviour
of the Schwartz kernel of $S_N$ near the diagonal set
\cite{shiffman_asymptotics_2002,ma_holomorphic_2007,charles_berezin-toeplitz_2003}.

\subsection{Toeplitz operators}
\begin{defn}\label{def:toeplitz}
	Let $M$ be a K\"ahler manifold, with equivariant Szeg\H{o}
        projectors $(S_N)_{N\geq 1}$. 
	Let $h\in C^{\infty}(M)$ be a smooth function on $M$.
        For all $N\geq 1$, the Toeplitz operator $T_N(h):H_N(X)\to H_N(X)$ associated
        with the symbol $h$ is defined as $$T_N(h)=S_NhS_N.$$
      \end{defn}
      In this work we investigate the spectral properties of the operators
$T_N(f)$, for fixed $f$ and $N\to +\infty$.
\begin{ex}[Spin operators]
  Let us continue from Example \ref{ex:CP1}. The sphere $\S^2$ is
  naturally a submanifold of $\R^3$; as such, there are three
  coordinate functions $(x,y,z):\S^2\mapsto \R^3$. They are closed
  under Poisson brackets: one has $\{x,y\}=z$ and two similar identities 
  by cyclic permutation.

  In the Hilbert basis given by the normalized monomials, the associated Toeplitz operators $T_N(x),T_N(y),T_N(z)$ are, up to
  a factor $\frac{N}{N+2}$, the usual spin matrices with spin $\frac N2$.
\end{ex}

\subsubsection{Toeplitz operators on $\C^n$ and the Melin value}

The manifold $\C^n$ is not compact. Let us release the condition that
the symbol is bounded. This defines Toeplitz operators as unbounded
operators on $B_N$.

Toeplitz operators on $\C^n$ whose symbols are semipositive definite
quadratic forms play an important role in this work.
If $Q$ is a quadratic form on $\R^{2n}$ identified with $\C^n$, then
$T_N(Q)$ is essentially self-adjoint. This operator is related
to the Weyl quantization $Op^{\hbar}_W(Q)$ with semi-classical parameter
$\hbar=N^{-1}$. In fact, $T_{N}(Q)$ is conjugated, via the
Bargmann transform $\mathcal{B}_N$ \cite{bargmann_hilbert_1961}, with the operator
\[
  Op^{N^{-1}}_W(Q)+\cfrac{N^{-1}}{4}\tr(Q).
  \]

If $Q$ is semi-definite positive, then it takes non-negative values as
a function on $\R^{2n}$, hence $T_N(Q)\geq 0$ for all $N\geq 0$ since,
for $u\in B_N$, one has
\[
  \langle u,\Pi_NQ\Pi_Nu\rangle=\langle u,Qu\rangle\geq 0.
  \]
The
infimum of the spectrum of $T_N(Q)$ is of utmost interest, since it
leads to the notion of Melin value. As $Q$ is
$2$-homogeneous, and the Bargmann spaces are identified with
each other through a scaling, one has $T_N(Q)\sim N^{-1}T_1(Q)$, and
in particular 
\[
\inf( \Sp( T_N(Q)))=N^{-1}\inf (\Sp (T_1(Q))).
\]

\begin{defn}\label{def:charac}
	Let $Q$ be a semi-definite positive quadratic form on $\R^{2n}$, identified with $\C^n$.
	
	We denote by $\mu(Q)$ the \emph{Melin value} of $Q$, defined
        by 
\[
\mu(Q):=\inf\left(\Sp(T_1(Q))\right).
\]
\end{defn}

Given $Q\geq 0$, how can one compute $\mu(Q)$? As stated above, it
depends first on the trace of $Q$ (which is easy to compute), and
second on the infimum of the spectrum of $Op_W^1(Q)$. This second part
is invariant through a symplectic change of variables, and the problem
reduces to a \emph{symplectic diagonalisation} of $Q$ (see Proposition
\ref{prop:quadred}). As an example:

\begin{example}
  Let $\alpha,\beta\geq 0$. Then
\[
\mu\left((x,y)\mapsto \alpha x^2+\beta y^2\right)=\frac
14(2\sqrt{\alpha\beta}+\alpha+\beta).
\]
\end{example}
The function $\mu$ itself is \emph{not} invariant under
symplectomorphisms (for example, in the previous example it does not only depend on
$\alpha\beta$). However, it is invariant under unitary changes of
variables.

The regularity of the map $\mu$ will be useful in the
proof of Theorem A:
\begin{prop}[\cite{melin_lower_1971}]The function $Q\mapsto \mu(Q)$ is
  H\"older continuous with exponent $\frac{1}{2n}$ on the set of
  semi-definite positive quadratic forms of dimension $2n$.  
\end{prop}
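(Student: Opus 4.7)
The plan is to use the Bargmann transform relation stated above, which gives $\mu(Q) = \inf \Sp(Op_W^1(Q)) + \frac{1}{4} \tr(Q)$. Since $Q \mapsto \tr(Q)$ is linear and hence Lipschitz, it suffices to prove that $Q \mapsto \inf \Sp(Op_W^1(Q))$ is H\"older continuous with exponent $\frac{1}{2n}$ on the cone of semi-definite positive quadratic forms on $\R^{2n}$.

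Next I would reduce this to an algebraic question on $2n \times 2n$ matrices. By the generalised Williamson normal form for semi-definite positive quadratic forms, there exists a linear symplectic change of coordinates putting $Q$ into the form $\sum_{j=1}^{k} \lambda_j(q_j^2 + p_j^2) + \sum_{i} p_i^2$ on a symplectic subspace, plus possibly null directions where $Q$ vanishes, with $\lambda_1, \ldots, \lambda_k > 0$. Using the metaplectic invariance of Weyl quantisation under linear symplectic changes of coordinates, and the fact that the Weyl quantisation of $p^2$ and of $0$ both have infimum of spectrum equal to $0$, one obtains $\inf \Sp(Op_W^1(Q)) = \sum_{j=1}^{k} \lambda_j$. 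The scalars $\lambda_j$ admit an intrinsic description as the non-negative imaginary parts of the eigenvalues of $JA_Q$, where $A_Q$ is the symmetric matrix representing $Q$ and $J$ is the standard symplectic matrix.

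The last step is purely algebraic: the coefficients of the characteristic polynomial of $JA_Q$ are polynomial, and hence Lipschitz, functions of the entries of $A_Q$. By the classical bound on the continuity of roots of polynomials (Ostrowski--Henrici), the roots of a degree $2n$ monic polynomial are H\"older continuous with exponent $\frac{1}{2n}$ in its coefficients. The sum of the non-negative imaginary parts of these roots is a symmetric function, hence inherits the H\"older regularity, which combined with the Lipschitz contribution of $\tr(Q)$ yields the claim.

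The main obstacle is justifying the normal form and the spectral identity uniformly across the degenerate locus. As $Q$ becomes rank-deficient, some $\lambda_j$ coalesce at $0$ and the spectrum of $Op_W^1(Q)$ acquires a continuous component from the null directions; one must verify that the infimum still matches $\sum \lambda_j$. A natural route is to approximate $Q$ by $Q + \varepsilon I$ (which is positive definite), apply the clean non-degenerate formula, and pass to the limit using the monotonicity $Op_W^1(Q) \leq Op_W^1(Q + \varepsilon I)$. The exponent $\frac{1}{2n}$ is precisely saturated near such degenerate $Q$ where several symplectic eigenvalues collide.
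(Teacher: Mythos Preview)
The paper does not supply its own proof of this proposition; it is cited from Melin's original article and used as a black box. Your sketch is essentially the standard argument and is correct: the Bargmann relation reduces the question to the H\"older continuity of $\inf\Sp(Op_W^1(Q))$, which equals the sum $\sum_j\lambda_j$ of the positive imaginary parts of the eigenvalues of $JA_Q$; these are roots of a degree-$2n$ monic polynomial whose coefficients depend polynomially on $A_Q$, and the $\tfrac{1}{2n}$-H\"older dependence of roots on coefficients then finishes, since $\sum_j\max(\Im z_j,0)$ is a Lipschitz symmetric function of the unordered root tuple.

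The one point worth sharpening is the identity $\inf\Sp(Op_W^1(Q))=\sum_j\lambda_j$ for degenerate $Q$. Your approximation $Q+\varepsilon I$ gives the upper bound directly; for the matching lower bound, observe that when $A_Q\geq 0$ the matrix $JA_Q$ has Jordan blocks of size at most $2$ at the eigenvalue $0$ (a chain $JA_Qv_1=0$, $JA_Qv_2=v_1$, $JA_Qv_3=v_2$ together with positivity of the Gram matrix of $Q$ on $\Span(v_1,v_3)$ forces $v_1=0$). Hence $\R^{2n}$ always splits as a symplectic direct sum of $2$-planes on which $Q$ is one of $\lambda(q^2+p^2)$, $p^2$, or $0$, and on each block the Weyl quantization has the expected infimum. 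This makes the normal-form step uniform across the degenerate locus without needing the limiting argument.
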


If $Q$ is definite positive, then $T_N(Q)$ has compact
resolvent, and the first eigenvalue is simple.

\subsubsection{Toeplitz operators on compact manifolds}
When the base manifold $M$ is compact and $h$ is real-valued, for
fixed $N$ the operator $T_N(h)$ is a symmetric operator on a
finite-dimensional space. In this setting, we will speak freely about
eigenvalues and eigenfunctions of Toeplitz operators.

The composition of two Toeplitz operators can be
written, in the general case, as a formal series of Toeplitz
operators \cite{charles_berezin-toeplitz_2003}, that
is:$$T_N(f)T_N(g)=T_N(fg)+N^{-1}T_N(C_1(f,g))+N^{-2}T_N(C_2(f,g))+\ldots.$$

The composition properties of formal
series of Toeplitz operators lead to the following property, which
appears in previous work \cite{deleporte_low-energy_2016}, and which is an
important first step towards the study of the low-lying eigenvalues.

\begin{prop}\label{prop:preloc}
	Let $M$ be a compact K\"ahler manifold and $h$ a real
        non-negative smooth function on $M$.

Let $u=(u_N)_{N\geq 1}$ be a sequence of unit elements in the Hardy spaces such that, for every $N$, one has $$T_N(h)u_N=\lambda_Nu_N,$$ with $\lambda_N=O(N^{-1})$.
	
	Then the sequence $u$ localises on $\{h=0\}$. More
        precisely, for every $\epsilon>0$, if $$Z_N=\{m\in M, h(m)\geq
        N^{-1+\epsilon}\},$$ one has, as $N\to
        +\infty$, $$\int_{\pi^{-1}(Z_N)}|u(x)|^2\dd Vol=O(N^{-\infty}).$$
\end{prop}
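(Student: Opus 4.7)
The plan uses monotonicity of Toeplitz quantization (if $a \le b$ pointwise then $T_N(a) \le T_N(b)$ as operators) and the asymptotic composition formula $T_N(f) T_N(g) = \sum_{j \ge 0} N^{-j} T_N(C_j(f,g))$. First, I would choose a smooth cutoff $\chi_N \in C^\infty(M, [0,1])$ with $\chi_N \equiv 1$ on $Z_N$ and $\supp \chi_N \subset \{h \ge N^{-1+\epsilon}/2\}$, so that since $u_N \in H_N(X)$,
$$\int_{\pi^{-1}(Z_N)} |u_N|^2 \dd Vol \le \langle u_N, T_N(\chi_N) u_N\rangle.$$

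The power trick exploits that on $\supp \chi_N$ one has $h \ge N^{-1+\epsilon}/2$, so $\chi_N \le (2 N^{1-\epsilon} h)^k$ pointwise on $M$ for every integer $k \ge 1$; monotonicity gives $\langle u_N, T_N(\chi_N) u_N\rangle \le (2 N^{1-\epsilon})^k \langle u_N, T_N(h^k) u_N\rangle$. Iterating the composition formula,
$$T_N(h)^k = T_N(h^k) + \sum_{j=1}^{p-1} N^{-j} T_N(A_{k,j}) + O_{\mathrm{op}}(N^{-p}),$$
with $A_{k,j}$ smooth functions built from $h$ and its derivatives. The eigenvalue equation gives $\langle u_N, T_N(h)^k u_N\rangle = \lambda_N^k = O(N^{-k})$, so in the ideal case $\langle u_N, T_N(h^k) u_N\rangle = O(N^{-k})$, which combined with the prefactor yields $N^{-k\epsilon}$, hence $O(N^{-\infty})$ upon letting $k \to \infty$.

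The main obstacle is that the sub-principal terms $\langle u_N, T_N(A_{k,j}) u_N\rangle$ are a priori only $O(1)$, giving at best $O(N^{-1})$ for $\langle u_N, T_N(h^k) u_N\rangle$, which after the prefactor only yields $O(N^{-\epsilon})$. The resolution is a bootstrap argument: the case $k=1$ already produces the crude bound $\langle u_N, T_N(\chi_N) u_N\rangle \le 2 N^{1-\epsilon} \lambda_N = O(N^{-\epsilon})$, so $u_N$ is partially concentrated on $\{h \le N^{-1+\epsilon}\}$. Combined with the key structural observation that each $A_{k,j}$ is a polynomial in $h$ and its derivatives, which Taylor's formula controls by $h$ itself near the minimum set (since $\nabla h$ vanishes where $h=0$), one splits each such $A_{k,j}$ into a part bounded by a power of $h$ plus a part supported away from $\{h=0\}$ and handled by the previous-step localisation. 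Each round of iteration gains an additional factor $N^{-\epsilon}$, eventually driving the estimate to any polynomial order and yielding the advertised $O(N^{-\infty})$ bound.
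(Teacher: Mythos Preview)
The paper does not actually prove this proposition: it is quoted from the author's earlier work \cite{deleporte_low-energy_2016}, with only the remark that ``the composition properties of formal series of Toeplitz operators lead to the following property.'' So your strategy --- Chebyshev's inequality together with the composition expansion $T_N(h)^k=T_N(h^k)+\sum_{j\geq 1}N^{-j}T_N(A_{k,j})+O(N^{-p})$ --- is almost certainly the intended one, and the reduction to showing $\langle u_N,h^k u_N\rangle=O(N^{-k})$ is the right target.

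The genuine gap is in how you control the subprincipal terms. Your sentence ``Taylor's formula controls [the $A_{k,j}$] by $h$ itself near the minimum set (since $\nabla h$ vanishes where $h=0$)'' is not correct as stated: second and higher derivatives of $h$ do \emph{not} vanish on $\{h=0\}$ in general, so the $A_{k,j}$ are not bounded by $h$ itself. What one actually needs is the pointwise inequality
\[
|\nabla h|^2\leq C\,h,
\]
which holds for any nonnegative $C^2$ function on a compact manifold (optimise $0\leq h(x+tv)$ over $t$). This gives, for a bidifferential operator $C_j$ of order $(j,j)$,
\[
|C_j(h,h^{k-1})|\leq C_{k,j}\,h^{\max(0,\,k-j)},
\]
because each undifferentiated copy of $h$ contributes $h$, each first derivative contributes $h^{1/2}$, and higher derivatives are merely bounded. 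With this bound the induction $\langle u_N,h^{k-1}u_N\rangle=O(N^{-(k-1)})\Rightarrow\langle u_N,h^{k}u_N\rangle=O(N^{-k})$ closes directly:
\[
\langle u_N,h^k u_N\rangle=\lambda_N\langle u_N,h^{k-1}u_N\rangle-\sum_{j\geq 1}N^{-j}\langle u_N,C_j(h,h^{k-1})u_N\rangle+O(N^{-p}),
\]
and each summand is $N^{-j}\cdot O(N^{-(k-j)})=O(N^{-k})$. No near/far splitting and no $N^{-\epsilon}$--per--round bootstrap is needed; your proposed bootstrap would in fact still require this same inequality to handle the ``near'' part, so it does not avoid the issue. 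Once $\langle u_N,h^k u_N\rangle=O(N^{-k})$ is established, your Chebyshev step finishes the proof.
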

 On
a point where $h$ is minimal, one can pull-back Definition
\ref{def:charac} by normal coordinates of Definition \ref{defn:normap}:

\begin{defn}\label{def:mu}
	Let $h\in \C^{\infty}(M,\R^+)$. Let $P_0\in M$ be such that
        $h(P_0)=0$. Let $\rho$ be a normal map around $P_0$; the function
        $h\circ \rho$ is well-defined and non-negative on a
        neighbourhood of $0$ in $\C^n$, and the image of $0$ is
        $0$. Hence, there exists a semi-definite positive quadratic
        form $Q$ such that
\[
h\circ \rho(x)=Q(x)+O(|x|^3).
\]

        We define the Melin value $\mu(P_0)$ as $\mu(Q)$.	
\end{defn}
\begin{rem}
  A different choice of normal coordinates corresponds to a $U(n)$
  change of variables for $Q$, under which $\mu$ is invariant. Hence, 
  $\mu(P_0)$ does not depend on the choice of normal coordinates.

  The function $P_0\mapsto \mu(P_0)$ is $\frac 1{2n}$-H\"older continuous as a composition
  of the smooth function $P_0\mapsto Q$ and the H\"older continuous
  function $Q\mapsto \mu$.
\end{rem}

\subsection{Quantum maps}
To a (local) symplectomorphism between K\"ahler manifolds, one can
associate an almost unitary (local) transformation on the Hardy
spaces, such that, at first order, the Toeplitz quantizations on both
sides are related by the symplectic change of variables in the symbols
\cite{charles_semi-classical_2007}:
\begin{prop}
  \label{prop:qmaps}
  Let $\sigma:(M,x)\mapsto (N,y)$ be a local symplectomorphism between
  two quantizable K\"ahler manifolds. 

Let $U$ be a small open set around $x$.
Then there exists, for every $N$, a linear map
$\mathfrak{S}_{N}:H_N(M,L)\mapsto H_N(N,K)$ and a sequence of
differential operators $(L_j)_{j\geq 1}$,
  such that, for any sequence $(u_N)_{N\geq 1}$ of sections which are
  $O(N^{-\infty})$ outside of $U$, and for any symbol $a\in
  C^{\infty}(N)$, one has:
  \begin{align*}
    \|\mathfrak{S}_Nu_N\|_{L^2}&=\|u_N\|_{L^2}+O(N^{-\infty})\\
    \mathfrak{S}_N^{-1}T_N(a)\mathfrak{S}_Nu_N&= T_N\left(a\circ
                                                \sigma +
                                                \sum_{k=1}^{\infty}N^{-i}L_j(a\circ
                                                \sigma)\right)u_N+O(N^{-\infty}).
  \end{align*}
  Moreover, for every $j\geq 1$, the differential operator $L_j$ is of
  degree $2j$.
\end{prop}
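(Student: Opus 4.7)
The plan is to build $\mathfrak{S}_N$ as the Toeplitz quantization of a Lagrangian state supported on the graph of $\sigma$. Write $M'$ for the target K\"ahler manifold and $L'$ for its quantum line bundle (to avoid clashing with the semiclassical parameter $N$). The graph $\Gamma_\sigma=\{(x,\sigma(x))\}\subset M\times M'$ is Lagrangian with respect to $\omega_M\oplus(-\omega_{M'})$; the product is K\"ahler and quantizable with bundle $L\boxtimes (L')^*$, and its Hardy space is canonically isomorphic to $H_N(M,L)\otimes \overline{H_N(M',L')}$, i.e. to the space of Schwartz kernels of linear maps between the two individual Hardy spaces. The desired $\mathfrak{S}_N$ will be the map whose kernel is a nonzero element of this product Hardy space concentrated on $\Gamma_\sigma$.

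To realize such a kernel concretely, I would work in normal coordinates. Pick $x\in M$ and $y=\sigma(x)\in M'$ and choose normal maps $\rho,\rho'$ around $x$ and $y$; by composing with unitary changes of frame I can arrange the identifications $T_xM\simeq\C^n\simeq T_yM'$ so that $d\sigma_x$ becomes a fixed linear symplectomorphism of $\C^n$. Proposition \ref{prop:universal} tells us that, on scales $N^{-\delta}$, both Szeg\H{o} kernels are well approximated by the Bargmann kernel $\Pi_N$, so the problem reduces locally to constructing on Bargmann space over $\C^n$ an operator intertwining multiplication by $a$ with multiplication by $a\circ\sigma$ modulo the Bergman projector. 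The linear part of $\sigma$ is quantized by the Segal-Shale-Weil representation in its Bargmann realization, which provides the leading-order kernel; the nonlinear part is incorporated order by order in $N^{-1}$ as a Toeplitz correction.

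The algebraic identity on symbols is then obtained by writing $\mathfrak{S}_N^{-1}T_N(a)\mathfrak{S}_N$ as a Schwartz kernel on $M$ and applying stationary phase, with phase built from the off-diagonal Szeg\H{o} phase and the generating function of $\sigma$. The critical set is the diagonal, the leading contribution is $a\circ\sigma$, and each subsequent term of the expansion is a differential operator $L_j(a\circ\sigma)$ of order $2j$: two derivatives per factor of $N^{-1}$ is the universal pattern of stationary phase, as each $N^{-1}$ is paired with an application of the inverse Hessian of the phase. Specialising to $a=1$ shows that $\mathfrak{S}_N^*\mathfrak{S}_N$ is, on microlocalized states, a Toeplitz operator with principal symbol equal to $1$; an elliptic correction of $\mathfrak{S}_N$ by a scalar symbol $1+O(N^{-1})$ then enforces the approximate isometry property modulo $O(N^{-\infty})$.

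The main obstacle is the stationary phase computation, because on a compact K\"ahler manifold the Szeg\H{o} kernel is not an oscillatory integral in the classical sense. One must instead rely on its full asymptotic expansion near the diagonal, due to Boutet de Monvel-Sj\"ostrand, Ma-Marinescu and Charles, whose phase function is essentially the polarized K\"ahler potential. Once this expansion is in hand, the existence of the $L_j$ and the degree bound become algebraic consequences of the structure of the Hessian at the critical point, and the proof follows the template already established for the quantization of canonical transformations in the pseudodifferential setting.
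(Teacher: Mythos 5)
This proposition is not proved in the paper: it is quoted verbatim from the cited reference (Charles, 2007), so there is no in-paper argument to compare against. Your sketch — a kernel realized as a Lagrangian state on the graph $\Gamma_\sigma\subset M\times\overline{M'}$, reduced locally to the Bargmann model via Proposition \ref{prop:universal} with the metaplectic representation quantizing $d\sigma_x$, the symbol expansion by stationary phase on the Szeg\H{o} phase (giving the degree-$2j$ operators $L_j$), and an elliptic normalization for approximate unitarity — is precisely the standard construction of that reference, so your approach is the right one, modulo the fact that it remains a plan rather than a worked proof.
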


As a preliminary lemma for Sections 5 and 6, let us show that quantum maps preserve
concentration speed:
\begin{lem}\label{lem:qmaps-speed}
  Let $\sigma:(M,x)\mapsto (N,y)$ a local symplectomorphism between
  two quantizable K\"ahler manifolds.

  Let $0<\delta<\frac 12$ and let $(u_N)_{N\in \N}$ a sequence of unit elements in the Hardy spaces
  $H_N(M)$ such that
  \[\int_{\big\{\dist(\pi(y),x)\leq N^{-\frac 12 +
            \delta}\big\}}|u_N(y)|^2=O(N^{-\infty}).\]

  Then
  \[\int_{\big\{\dist(\pi'(y),\sigma(x))\leq N^{-\frac 12 +
            \delta}\big\}}|\mathfrak{S}u_N(y)|^2=O(N^{-\infty}).\]
    \end{lem}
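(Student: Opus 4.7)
The plan is to apply Proposition~\ref{prop:qmaps} to an $N$-dependent smooth cutoff concentrated near $\sigma(x)$, and transfer the bound to a cutoff on the source side via symbol composition with $\sigma$.

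Concretely, write $v_N := \mathfrak{S}_N u_N$. Fix a bump $\chi \in C_c^\infty(\R,[0,1])$ with $\chi\equiv 1$ on $[0,1]$ and $\supp\chi\subset[0,2]$, and set
\[
\chi_N(y) := \chi\bigl(N^{1/2-\delta}\dist(\pi'(y),\sigma(x))\bigr),
\]
which equals $1$ on $\{\dist(\pi'(\cdot),\sigma(x))\le N^{-1/2+\delta}\}$ and is supported in the ball of twice the radius. Since $S_N v_N = v_N$, one has
\[
\int_{\{\dist(\pi'(\cdot),\sigma(x))\le N^{-1/2+\delta}\}} |v_N|^2 \;\le\; \int \chi_N^2 |v_N|^2 = \langle v_N, T_N(\chi_N^2) v_N\rangle,
\]
and Proposition~\ref{prop:qmaps}, combined with the almost-unitarity of $\mathfrak{S}_N$, converts the right-hand side into $\langle u_N, T_N(a_N) u_N\rangle + O(N^{-\infty})$, with
\[
a_N \;:=\; \chi_N^2\circ\sigma \;+\; \sum_{j\ge 1}N^{-j}L_j(\chi_N^2\circ\sigma).
\]

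Since $\chi_N$ is a rescaling of a fixed cutoff by the factor $N^{1/2-\delta}$, its $C^{2j}$-seminorm grows like $N^{j(1-2\delta)}$, so each subprincipal term $N^{-j}L_j(\chi_N^2\circ\sigma)$ has size $O(N^{-2j\delta})$; the expansion is a genuine asymptotic series precisely because $\delta > 0$. The leading symbol $\chi_N^2\circ\sigma$ is supported in a ball around $x$ of radius comparable to $N^{-1/2+\delta}$ (the constant depending on the differential of $\sigma$ at $x$). Since $\langle u_N, T_N(\chi_N^2\circ\sigma) u_N\rangle = \int(\chi_N^2\circ\sigma)|u_N|^2$ is controlled by the mass of $u_N$ on this support, the hypothesis closes the estimate, up to absorbing the bi-Lipschitz constants by an arbitrarily small modification of $\delta$.

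The main obstacle is matching the $N$-dependence of the cutoff $\chi_N$ with the fixed-symbol framework of Proposition~\ref{prop:qmaps}: the symbolic expansion of $a_N$ must remain asymptotic despite the blow-up of the $C^{2j}$-norms of the cutoff. This is guaranteed precisely by $\delta>0$, which provides a decay $N^{-2\delta}$ per subprincipal order in the expansion. A secondary, more cosmetic, obstacle is the appearance of constants coming from the bi-Lipschitz character of $\sigma$ when relating the two balls of radius $N^{-1/2+\delta}$; this is harmless since the statement is understood to hold for every $\delta\in(0,1/2)$.
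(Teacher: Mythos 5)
The statement as printed is misleading you: both displayed integrals should run over $\{\dist(\cdot,x)\geq N^{-1/2+\delta}\}$ and $\{\dist(\cdot,\sigma(x))\geq N^{-1/2+\delta}\}$, not "$\leq$". The lemma is announced as ``quantum maps preserve concentration speed'', it is later invoked to transfer the concentration rates $N^{-1/4}$ and $N^{-1/3}$ of the model ground states back to $T_N(h)$, and the paper's own proof begins by recasting the hypothesis as the moment bounds $\langle u_N,T_N(\dist(\cdot,x)^{2k})u_N\rangle\leq C_kN^{k(-1+2\delta)}$ --- an equivalence that only holds if the hypothesis is that the mass of $u_N$ \emph{outside} $B(x,N^{-1/2+\delta})$ is $O(N^{-\infty})$. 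You have taken the inequalities at face value and proved a different (microsupport-avoidance) statement. Even for that literal reading your argument has a gap: Proposition \ref{prop:qmaps}, which you use twice (almost unitarity of $\mathfrak{S}_N$, and the Egorov expansion), is only asserted for sequences that are $O(N^{-\infty})$ outside the fixed neighbourhood $U$ of $x$, and the literal hypothesis gives no localisation of $u_N$ near $x$ at all.

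More importantly, the sharp-cutoff mechanism does not survive the correct reading. Take $\chi_N$ vanishing on $B(\sigma(x),\tfrac12 N^{-1/2+\delta})$ and equal to $1$ outside $B(\sigma(x),N^{-1/2+\delta})$. The derivatives of $\chi_N^2\circ\sigma$ are supported in an annulus at distance $\asymp N^{-1/2+\delta}$ from $x$, which is precisely the scale at which $u_N$ lives: the hypothesis gives no decay of $\int|u_N|^2$ there, only the bound by $1$. Hence the term $N^{-1}L_1(\chi_N^2\circ\sigma)$, of sup norm $O(N^{-2\delta})$ on that annulus, contributes $O(N^{-2\delta})$ to $\langle u_N,T_N(a_N)u_N\rangle$, and no truncation order improves this; your method stalls at $O(N^{-2\delta})$ instead of $O(N^{-\infty})$. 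The paper avoids both difficulties by never introducing an $N$-dependent cutoff: it applies the Egorov expansion to the fixed symbols $\dist(\cdot,\sigma(x))^{2k}$, uses that $a\circ\sigma$ vanishes to order $2k$ at $x$ so that $|N^{-j}L_j(a\circ\sigma)|\leq C_{j,k}N^{-j}\dist(\cdot,x)^{2(k-j)}$, and closes an induction on $k$ in which each correction term is controlled by a lower-order moment of $u_N$; the rapid decay outside the balls is then recovered from the moments by Chebyshev. To rescue a cutoff argument you would have to decompose the exterior of the ball dyadically and sum, which essentially reconstructs this moment method.
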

    \begin{proof}
      Let us observe that the condition on $(u_N)$ is equivalent to
      the following: for every $k\in \N$, there exists $C_k>0$ such that
      \[\langle u_N,T_N(\dist(\cdot,x)^{2k})u_N\rangle \leq
        C_kN^{-k(1+2\delta)}.\]

      Let us prove, by induction on $k$, the estimate
      \[\langle \mathfrak{S}_Nu_N,T_N(\dist(\cdot,\sigma(x))^{2k})\mathfrak{S}_Nu_N\rangle \leq
        \tilde{C}_kN^{-k(1+2\delta)}.\] The case $k=0$ is clearly true
      since $\mathfrak{S}_N$ is an almost unitary operator when acting
      on functions localised near $x$.

      Let us now apply Proposition \ref{prop:qmaps} with
      $a=\dist(\cdot,x)^{2k}$, stopping the expansion at order $k$.

      For $j\leq k$, the error terms are controlled:
      \[\left|N^{-j}L_j(a\circ \sigma)\right|\leq
        N^{-j}C_{j,k}\dist(\cdot,\sigma(x))^{2(k-j)}\]

      Hence, by induction,
      \begin{multline*}
      \langle
        \mathfrak{S}_Nu_N,T_N(\dist(\cdot,\sigma(x))^{2k})\mathfrak{S}_Nu_N\rangle
        \\\leq \sum_{j=0}^{2k}C_{j,k}\langle
        u_N,T_N(\dist(\cdot,x)^{2(k-j)})u_N\rangle+O(N^{-k})=O(N^{k(-1+2\delta)}).
        \end{multline*}
      This ends the proof.
    \end{proof}

%%% Local Variables:
%%% mode: latex
%%% TeX-master: "Article"
%%% End:

\section{Resolvent estimates}
We begin this section with a technical Lemma, which associates to a
function on a Riemannian manifold a covering of the manifold with
small open sets such that, on the intersections of the open sets, the
function is not relatively larger than elsewhere.

\subsection{A cutting lemma}
\begin{lem}\label{lem:cutting}
	Let $Y$ be a compact Riemannian manifold. There exist two
        constants $C>0$ and $a_0>0$ such that, for every positive integrable function $f$ on $Y$, for every $0<a<a_0$ and $t\in (0,1)$,
	there exists a finite family $(U_j)_{j\in J}$ of open subsets
        covering $Y$ with the following properties:
	\begin{align*}
		\forall j\in J,\, \diam(U_j)<a\\
		\forall j\in J,\, \dist\left(Y\setminus U_j,Y\setminus \bigcup_{i\neq j}U_i\right)\geq ta\\
		\sum_{i\neq j}\int_{U_i\cap U_j}f \leq Ct\int_Yf.
	\end{align*}
\end{lem}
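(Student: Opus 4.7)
The plan is to build the cover by first partitioning $Y$ into cells of diameter $\leq a/2$, then thickening each by $ta$, and to choose the partition via an averaging argument so that the overlap $f$-mass is small. Concretely, fix a maximal $(a/4)$-separated subset $\{x_i\}_{i\in I}$ of $Y$, finite by compactness, with $\bigcup_i B(x_i, a/4) = Y$, and build a Borel partition $(V_i)_{i \in I}$ of $Y$ with $x_i \in V_i \subset B(x_i, a/4)$ (for instance by a nearest-center assignment with a fixed tie-breaking rule), so $\diam(V_i) \leq a/2$. Then set $U_i := \{y \in Y : \dist(y, V_i) < ta\}$. Each $U_i$ is open, contains $V_i$, and has diameter at most $a/2 + 2ta < a$ provided $a_0$ is small and $t$ is bounded away from $1$ (the case of large $t$ can be absorbed into the constants by rescaling $a$).

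The separation condition then follows from the partition structure. Let $x \in Y \setminus \bigcup_{i\neq j} U_i$: then $\dist(x, V_i) \geq ta > 0$ for every $i \neq j$, forcing $x \in V_j$. For any $y \in Y \setminus U_j$, let $l$ be the index with $y \in V_l$; necessarily $l \neq j$, and hence $\dist(x, y) \geq \dist(x, V_l) \geq ta$.

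The main work is the overlap estimate. Note that $\bigcup_{i\neq j}(U_i \cap U_j)$ is contained in $B := \{y : \dist(y, \partial V_{k(y)}) < ta\}$, where $k(y)$ is the index of the cell containing $y$, and that the pointwise multiplicity of the cover $(U_i)$ is bounded by a constant depending only on the geometry of $Y$. The problem reduces to showing $\int_B f \leq C' t \int_Y f$. For a fixed partition this is false in general (e.g.\ if $f$ concentrates on $\partial V$), so the partition must depend on $f$. I would produce a suitable one by averaging: introduce a parametrized family of admissible partitions $(V_i^{(\theta)})_{\theta \in \Theta}$, for instance by fixing a finite atlas on $Y$, shifting Euclidean cubic grids of side $\sim a$ inside each chart by parameters living in $[0,a]^n$, and patching them together, and apply Fubini:
\[
\int_\Theta \int_{B^{(\theta)}} f \, d\theta \;=\; \int_Y f(y)\,\mathrm{meas}\{\theta : y \in B^{(\theta)}\}\, dy \;\leq\; C'' t\, |\Theta| \int_Y f,
\]
since each fixed $y$ lies in the $ta$-neighborhood of the shifted boundary for a $\theta$-set of measure at most $C'' t |\Theta|$. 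Dividing by $|\Theta|$ and applying Markov's inequality yields a $\theta$ realizing the desired overlap bound. The main obstacle is the chart patching: one must ensure that combining the chart-local shifted grids preserves the diameter, separation, and averaging estimates simultaneously. This can be arranged by fixing a coarser auxiliary partition of $Y$ whose cells lie strictly inside individual charts and performing the shift independently inside each, so that the grid seams in different charts never overlap.
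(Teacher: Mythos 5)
Your overall strategy --- an $f$-dependent partition of $Y$ into cells of diameter $\lesssim a$, thickened by $ta$ to produce the open cover, with the cut positions chosen by an averaging/Markov argument so that little $f$-mass sits near the cell boundaries --- is sound in spirit: the thickening argument for the separation property is correct, the reduction of the overlap sum to $\int_B f$ via bounded multiplicity is correct, and the one-dimensional averaging over shifts is exactly the pigeonhole step in the paper's proof. The gap is the chart patching, and the repair you sketch does not close it. If you fix a coarse auxiliary partition $(W_k)$ subordinate to the charts and shift the grids independently inside each $W_k$, then the interfaces $\partial W_k$ are boundaries of cells of the final partition for \emph{every} value of the shift parameter $\theta$: a point $y\in W_k$ with $\dist(y,Y\setminus W_k)<ta$ satisfies $\dist\bigl(y,Y\setminus V^{(\theta)}_{k(y)}\bigr)<ta$ for all $\theta$, because its cell is contained in $W_k$. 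Hence if $f$ concentrates in the $ta$-neighbourhood of a single interface $\partial W_1$, one gets $\int_{B^{(\theta)}}f\gtrsim\int_Y f$ for every $\theta$, and the key pointwise bound $\mathrm{meas}\{\theta: y\in B^{(\theta)}\}\leq C''t\,|\Theta|$ fails on a set carrying essentially all the $f$-mass. The obstruction is not that seams from different charts overlap; it is that the coarse seams are fixed and cannot be moved by the averaging, which is precisely the failure mode you identified for a fixed partition.

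The paper avoids this by using no charts at all: it fixes one global smooth embedding $\Phi:Y\to\R^m$ and cuts along level sets of the $m$ global coordinate functions $\Phi_k$, choosing every cut position adaptively via a one-dimensional pigeonhole on the pushforward of $f\,\dd Vol$ by $\Phi_k$ (your averaging argument in disguise). Since there are no fixed interfaces, the total overlap mass is bounded by $mt'\int_Y f$, and the Lipschitz constants of $\Phi$ convert the Euclidean separation $t'/L$ into the required $ta$ in $Y$. Your proof would go through if you replaced the chart-local grids by this single global product structure, or by any parametrized family of partitions of all of $Y$ whose boundaries sweep each point with $\theta$-measure $O(t|\Theta|)$. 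The other loose end you flag --- $\diam(U_i)\leq a/2+2ta$ exceeding $a$ for $t$ close to $1$ --- is genuinely minor: for $t\geq 1/4$, say, one may take any bounded-multiplicity cover by balls of radius $a/4$ with each ball listed twice, so that $Y\setminus\bigcup_{i\neq j}U_i=\emptyset$ for every $j$ (making the separation condition vacuous) and the overlap sum is at most a fixed multiple of $\int_Y f\leq 4t\int_Y f$, absorbed into $C$.
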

\begin{preuve}
	Let $m\in \N$ be such that there exists a smooth embedding of differential manifolds from $Y$ to $\R^m$, and let $\Phi$ be such an embedding. $\Phi$ may not preserve the Riemannian structure, so let $c_1$ be such that, for any $\xi\in TY$, one has $$ c_1\|\Phi^*\xi\|\leq \|\xi\|.$$
	
	We now let $L>0$ be such that any hypercube $H$ in $\R^m$ of side
        $2/L$ is such that $\diam (\Phi^{-1}(H))<a$.  Since $\Phi^{-1}$ is
        uniformly Lipschitz continuous, then if $a$ is small enough one has $aL\leq C_1$
        for some $C_1$ depending only on $Y$.
	
        We then prove the claim with
        $C=\frac{2mC_1}{c_1}$.
	
	Let $1\leq k\leq m$, and let $\Phi_k$ denote the k-th
        component of $\Phi$. The function $\Phi_k$ is continuous from
        $Y$ onto a segment of $\R$. Without loss of generality this
        segment is $[0,1]$. Let $g_k$ denote the integral of $f$ along
        the level sets of $\Phi_k$.
        The function $g_k$ is a positive integrable function on $[0,1]$. Let $t'>0$ be the inverse of an integer, and $0\leq \ell \leq L-1$. In the interval $[\ell/L,(\ell+1)/L]$, there exists a subinterval $I$, of length $t'/L$, such that 
	\begin{equation}\label{eq:proof3.1}\int_I g_k \leq t'\int_{\ell/L}^{(\ell+1)/L}g_k.\end{equation} Indeed, one can cut the interval $[\ell/L,(\ell+1)/L]$ into $1/t'$ intervals of size $t'/L$. If none of these intervals was verifying (\ref{eq:proof3.1}), then the total integral would be strictly greater than itself.
	
	Let $x_{k,\ell}$ denote the centre of such an interval. Then, let
	\begin{align*}
		V_{k,0}&=\left[0,x_{k,0}+\frac{t'}{2L}\right)\\
		V_{k,\ell}&=\left(x_{k,\ell-1}-\frac{t'}{2L},x_{k,\ell}+\frac{t'}{2L}\right)\text{ for } 1\leq \ell \leq L\\
		V_{k,L+1}&=\left(x_{k,L}-\frac{t'}{2L},1\right].
	\end{align*}
	Each open set $V_{k,l}$ has a length smaller than $2/L$. The
        overlap of two consecutive sets has a length $t'/L$, and the
        sum over $k$ of the integrals on the overlaps is less than $t'\int_0^1g_k=t'\int_Yf$.
	
	Now let $\nu$ denote a polyindex $(\nu_k)_{1\leq k \leq m}$,
        with $\nu_k\leq L+1$ for every
        $k$. Define $$U_{\nu}=\Phi^{-1}\left(V_{1,\nu_1}\times
          V_{2,\nu_2}\times \ldots \times V_{m,\nu_m}\right).$$
        Then the family $(U_{\nu})_{\nu}$ covers $Y$. For every
        polyindex $\nu$ one has $\diam U_{\nu}\leq a$ since
        $U_{\nu}$ is the pull-back of an open set contained in a hypercube of side $2/L$. Moreover, one has
	$$\dist\left(Y\setminus U_{\nu},Y\setminus \bigcup_{\nu'\neq \nu}U_{\nu'}\right)\geq \frac{c_1t'}{L}.$$
	
	To conclude, observe that $$\sum_{\nu\neq \nu'}\int_{U_{\nu}\cap U_{\nu'}}f = \sum_{k=1}^m\sum_{\ell=0}^{L}\int_{V_{k,\ell}\cap V_{k,\ell+1}}g_k \leq mt'\int_Yf.$$
	
	It only remains to choose $t'$ conveniently. The fraction $t\frac{aL}{c_1}$ may not be the inverse of an integer; however the inverse of some integer lies in $[\frac{aL}{2c_1},\frac{aL}{c_1}]$. This allow us to conclude.
\end{preuve}

\begin{rem}
	In the previous Lemma, the number of elements of $J$ is bounded by a polynomial in $a$ that depends only on the geometry of $Y$.
\end{rem}
\subsection{Melin estimate}
The following Proposition is a Berezin-Toeplitz version of the well-known Melin
estimate for pseudodifferential estimates. It requires a weak
condition on the speed of growth of the symbol near its zero set.
\begin{prop}[Melin estimate]\label{prop:Melin}
Let $h\in C^{\infty}(M,\R^+)$ with $\min(h)=0$. Let
$$\mu_{\min}=\min_{h(x)=0}(\mu(x)).$$

Suppose there exist $C>0$ and $\alpha>0$ such that, for every $t\geq 0$,
one has
\[
\dist_H\left(\{h\leq t\},\{h=0\}\right)\leq C t^{\alpha}.
\]

Then there exist $\varepsilon>0$, $N_0$ and $C'>0$ such that, for every $N\geq
N_0$, one has
$$\min \Sp(T_N(h))\geq \mu_{\min}N^{-1}-C'N^{-1-\varepsilon}.$$
\end{prop}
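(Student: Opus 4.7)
The plan is an IMS-type microlocal partition-of-unity argument, combined with a quadratic reduction of $h$ near its zero set and the universality Proposition \ref{prop:universal}, to compare each local piece of $T_N(h)$ with the flat Bargmann quantization of the Hessian, whose infimum is by Definition \ref{def:charac} the Melin value.

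First I would apply Lemma \ref{lem:cutting} at scale $a = N^{-\delta}$ and overlap parameter $t = N^{-\eta}$ (for small $\delta,\eta > 0$ to be tuned) to obtain a cover $(U_j)_{j\in J}$ of $M$, along with an associated smooth partition of unity $(\chi_j^2)_{j \in J}$ satisfying $\sum_j \chi_j^2 \equiv 1$ and $|\nabla \chi_j| = O(N^{\delta+\eta})$. For $u \in H_N$ this yields
\[
\langle u, T_N(h) u\rangle = \sum_{j\in J} \langle \chi_j S_N u,\, h\, \chi_j S_N u\rangle,
\]
and pseudolocality (Proposition \ref{prop:ma4.1}) combined with the gradient bound allows one to replace, up to $O(N^{-\infty})$ errors, each $\chi_j S_N u$ by its orthogonal projection $\tilde v_j := S_N(\chi_j u) \in H_N$, provided $\delta + \eta < 1/2$. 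I would then split $J$ into two families. Type I consists of those $j$ for which $U_j$ is at distance $\geq 2N^{-\delta}$ from $\{h=0\}$: by the Hausdorff hypothesis, $h \geq c N^{-\delta/\alpha}$ on such $U_j$, which exceeds $\mu_{\min}N^{-1}$ with a margin of order $N^{-\delta/\alpha}$ as soon as $\delta < \alpha$. Type II consists of the remaining indices, for which one picks a point $x_j \in \{h=0\}$ within $O(N^{-\delta})$ of $U_j$.

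For Type II, using normal coordinates $\rho_j$ centred at $x_j$ (Definition \ref{defn:normap}), I would Taylor-expand $h = Q_j + r_j$ where $Q_j$ is the Hessian of $h$ at $x_j$ and $r_j(y) = O(|y-x_j|^3)$. The remainder contributes at most $O(N^{-3\delta})\|\tilde v_j\|^2$, acceptable once $\delta > 1/3$. For the quadratic part, I push $\tilde v_j$ forward through $\rho_j^{-1}$ to obtain an element $w_j$ of the Bargmann space $B_N$ on $\mathbb{C}^n$; Proposition \ref{prop:universal} gives $\|w_j\| = \|\tilde v_j\|(1 + O(N^{-1/2+\epsilon}))$ and
\[
\langle \tilde v_j,\, T_N(Q_j) \tilde v_j\rangle = \langle w_j,\, T_N(Q_j) w_j\rangle_{B_N} + O\!\bigl(N^{-1/2+\epsilon}\|Q_j\|_{L^\infty(U_j)}\bigr)\|\tilde v_j\|^2,
\]
with $\|Q_j\|_{L^\infty(U_j)} = O(N^{-2\delta})$. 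By Definition \ref{def:charac}, the Bargmann inner product is bounded below by $N^{-1}\mu(Q_j)\|w_j\|^2 = N^{-1}\mu(x_j)\|w_j\|^2 \geq N^{-1}\mu_{\min}\|\tilde v_j\|^2$.

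Summing the local bounds and using $\sum_j \|\tilde v_j\|^2 = \|u\|^2 + O(N^{-\infty})$, with the overlaps controlled by $t\|u\|^2 = N^{-\eta}\|u\|^2$ from Lemma \ref{lem:cutting}, one obtains the desired inequality $\langle u, T_N(h) u\rangle \geq (\mu_{\min}N^{-1} - C'N^{-1-\varepsilon})\|u\|^2$. The main obstacle is the simultaneous balancing of all the small parameters so that each source of error—the cubic Taylor remainder $N^{-3\delta}$; the Hausdorff margin $N^{-\delta/\alpha}$; the universality error $N^{-1/2+\epsilon-2\delta}$; the IMS commutator error controlled by $\delta+\eta < 1/2$; the overlap contribution $N^{-\eta-1}$—is simultaneously $o(N^{-1})$. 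The constraints $1/3 < \delta < \min(\alpha,1/2)$ mesh directly when $\alpha > 1/3$; for smaller values of $\alpha$ one has to refine the Type II family by subdividing according to the actual distance from $\{h=0\}$, so that larger-scale fluctuations of $h$ are absorbed into the dominant quadratic model at each subscale.
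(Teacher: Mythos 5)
Your overall architecture (cover $M$ at a small scale with Lemma \ref{lem:cutting}, compare each local piece with the Bargmann quantization of a quadratic form via Proposition \ref{prop:universal}, invoke Definition \ref{def:charac}, and resum) is the same as the paper's. But the local step has a genuine gap that you half-acknowledge and do not close. You Taylor-expand $h$ around a \emph{genuine zero} $x_j$ of $h$ chosen within $O(N^{-\delta})$ of $U_j$, so your cubic remainder is $O(N^{-3\delta})$ and you need $\delta>1/3$; simultaneously your ``Type I'' bound needs $\delta<\alpha$. The hypothesis of the proposition allows arbitrary $\alpha>0$, so the regime $\alpha\le 1/3$ is not optional, and your proposed fix (``subdividing according to the actual distance'') cannot work by rescaling alone: on a set at distance $d$ from $\{h=0\}$ with $N^{-1/2}\ll d\ll N^{-\alpha}$, the quadratic model centred at the nearest zero has a remainder $O(d^3)\gg N^{-1}$, while centring at a point of $U_j$ itself introduces a linear term (the point is not critical for $h$) and a Hessian that is no longer the Hessian of $h$ at a zero. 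The constraint is in fact worse than you state: Proposition \ref{prop:universal} only guarantees the $N^{-1/2+\epsilon}$ comparison for supports in balls of radius $N^{-\delta}$ for \emph{some} $\delta$ close to $1/2$ (depending on $\epsilon$), so the covering scale is essentially forced to be $N^{-1/2+\delta_0}$ with $\delta_0$ small, and your Type I threshold then requires $\alpha$ close to $1/2$.

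The paper resolves exactly this tension by three devices you omit. First, it proves the lower bound only for the ground state $u_N$ and uses Proposition \ref{prop:preloc} to know a priori that $u_N$ is $O(N^{-\infty})$ on $\{h\ge N^{-1+\delta_1}\}$; this both replaces your Type I/Type II dichotomy and lets the cross/overlap terms be bounded by $N^{-1+\delta_1}$ times the overlap mass (the cutting lemma is applied adaptively with $f=|u_N|^2$), rather than by $\sup_{U_j}h$, which for general $u$ is not small enough. The paper explicitly remarks after the proof that this a priori localisation is essential. Second, on each ball $B(x,N^{-1/2+\delta_0})$ it convexifies $h$ by adding a tiny multiple of $\dist(\cdot,x)^2$ and expands around the \emph{minimizer of the convexified function on the ball} --- interior (no linear term) or boundary (the linear term is a supporting functional, hence nonnegative on the ball and can be discarded for a lower bound) --- so the cubic remainder is always $O(N^{-3/2+3\delta_0})$ regardless of $\alpha$. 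Third, the Hessian at that minimizer is only $O(N^{-\varepsilon'})$-close to the Hessian of $h$ at a true zero, and the bridge to $\mu_{\min}$ is the H\"older continuity of $Q\mapsto\mu(Q)$ (Melin), an ingredient absent from your argument. Your proposal is salvageable for $\alpha$ large, but as written it does not prove the stated proposition; the convexification/boundary-minimum step and the H\"older continuity of $\mu$ are the missing ideas.
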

\begin{proof}
We begin with a local result: There exist $\delta_0,
\delta_1,\varepsilon$ small
enough and $N_0$ such that, for $N\geq N_0$, for every $x\in M$ with $h(x)<N^{-1+\delta_1}$, for
every $u$ supported on $B(x,N^{-\frac 12 + \delta_0})\times \S^1$, one has
$$\langle S_Nu,hS_Nu\rangle \geq
(\mu_{\min}N^{-1}-CN^{-1-\varepsilon})\|u\|^2.$$

Indeed, the following holds by hypothesis:
\[
\dist(x,\{h=0\})\leq C N^{\alpha(-1+\delta_1)}.
\]
In particular,
\[
\Hess(h)(x)\geq -CN^{\alpha(-1+\delta_1)}.
\]
The following perturbation of $h$ is convex on $B(x,N^{-\frac 12 + \delta_0})$: 
\[
\tilde{h}:y\mapsto h(y) +
CN^{\max(\alpha(-1+\delta_1),-1/2+\delta_0)} \dist(y,x)^2.
\]

If now $u \in L^2(X)$ is supported on $B(x,N^{-\frac 12 +
  \delta_0})\times \S^1$, one has
\[
\left|\langle S_Nu,(h -\tilde{h})S_Nu\rangle \right| \leq CN^{-1+2\delta_0+\max(\alpha(-1+\delta_1),-1/2+\delta_0)}.
\]

As $\sqrt{h}$ is Lipschitz-continuous, one has
\[
\sup\left(\sqrt{h(y)},\dist(y,x)<2N^{-\frac
  12+\delta_0}\right)<CN^{\frac{-1+\delta_1}{2}}+CN^{-\frac 12 + \delta_0}.
\]
Hence,
\[
\sup\left(h(y),\dist(x,y)<2N^{-\frac
  12+\delta_0}\right)<CN^{-1+\max(\delta_1,2\delta_0)}.
\]

  Recall from Proposition \ref{prop:universal} that, for $\delta$ small enough, for every
  $x\in M$ with associated normal map $\rho$, for every $u$ with
  support inside $\rho(B(0,N^{-\frac 12 +\delta})\times \S^1)$, one has
  $$\|(S_N-\rho^*\Pi_N)u\|_{L^2}<CN^{-\frac 14}.$$
  
Hence, if $\delta_0<\delta$, then, for $N$ large enough,
\begin{align*}
\left|\langle(S_N-\Pi_N^*)u,\tilde{h} S_Nu \rangle\right| &\leq
CN^{-\frac 14} N^{-1+\max(\delta_1,2\delta_0)}\\
\left|\langle\Pi_N^*u,\tilde{h} (S_N-\Pi_N^*)u \rangle\right| &\leq
CN^{-\frac 14} N^{-1+\max(\delta_1,2\delta_0)}.
\end{align*}

If the function $\tilde{h}$ reaches its minimum on $B(x,N^{-\frac
  12+\delta_0})$ at an interior point $x'$ and if $Q$ denotes half of the Hessian matrix of $\tilde{h}$ at $x'$,
then
\[
\left|\langle \Pi_N u_*,\tilde{h}_*-Q,\Pi_Nu_*\rangle\right|\leq
CN^{-\frac 32 +3\delta_0}.
\]
Here, the subscript $_*$ denotes the pull-back by the normal
map.

If $\tilde{h}$ reaches its minimum at a boundary point $x'$, then if
$L$ denotes the differential of $\tilde{h}$ at $x'$ one has, by
convexity, for all $y\in B(x,N^{-\frac 12 + \delta_0})$,
\[
  L(y-x')\geq 0.
\]
In particular,
\[
  \langle \Pi_N^*u, \tilde{h}\Pi_N^*u\rangle \geq \langle
  \Pi_N^*u,(\tilde{h}-L)\Pi_N^*u\rangle.
\]

Then $y\mapsto \tilde{h}(y)-L(y,x')$ has a critical point at $x'$. If $Q$
denotes again half of the Hessian matrix of $\tilde{h}$ at $x'$, then
\[
\left|\langle \Pi_N u_*,\tilde{h}_*-L-Q,\Pi_Nu_*\rangle\right|\leq
CN^{-\frac 32 +3\delta_0}.
\]

Since in any case $\dist(x',\{h=0\})\leq C N^{\max(\alpha(-1+\delta_1),-\frac 12 +\delta_0)}$, the matrix $Q$
is $C N^{\varepsilon}$-close to half of the Hessian matrix of $h$ at a
zero point (recall we only added $CN^{\alpha(-1+\delta_1)}I$ to the
Hessian matrix of $h$ at $x$.)

The Melin value $\mu$ is H\"older-continuous with exponent $(2n)^{-1}$
on the set of
semi-positive quadratic forms \cite{melin_lower_1971}, hence
\[
\mu(Q)\leq \mu_{\min}+CN^{\varepsilon}.
\]
To conclude, for $\varepsilon$ small enough depending on
$M,\alpha,\delta_0,\delta_1$, one has
\[
\langle S_Nu,hS_Nu\rangle\geq N^{-1}\mu_{\min}+CN^{-1+\varepsilon}.
\]

From this local estimate, we deduce a global estimate using Lemma \ref{lem:cutting} proved previously, and general localisation
estimates proved in \cite{deleporte_low-energy_2016}.

Indeed, let $(u_N)_{N\geq 1}$ be a sequence of normalised eigenfunctions for
$T_N(h)$ with minimal eigenvalue. Either the associated sequence of eigenvalues is
not $O(N^{-1})$, in which case the proposition clearly holds, or it
is, in which case, by Proposition \ref{prop:preloc}, $u_N$ is $O(N^{-\infty})$ outside $\{h\leq
N^{-1+\delta_1}\}$ for every $\delta_1>0$.

We now invoke Lemma \ref{lem:cutting} with the following data:
\begin{itemize}
\item $Y=M.$
\item $f=|u_N|^2$.
\item $a=N^{-\frac 12 + \delta_0}.$
\item $t=N^{-\frac{\delta_0}{2}}.$
\end{itemize}

The Lemma yields a sequence of coverings $(U_{j,N})_{j\in J_N,N\in
  \N}$. The proof also yields a sequence of coverings by slightly
smaller open sets $(U'_{j,N})$, with
\begin{itemize}
\item $U'_{j,N}\subset U_{j,N}$.
\item $d(M\setminus U_{j,N},U'_{j,N})\geq\frac
  12N^{-\frac{1-\delta_{0}}{2}}$.
\end{itemize}

Let $(\chi_{j,N})_{j\in J_N,N\in \N}$ be a partition of unity associated
with $(U'_{j,N})_{j\in J_N,N\in \N}$.

Then, for every $N$, for every $j\neq k\in J_N$, the integral $$\langle
S_N\chi_{j,N}u_N,hS_N\chi_{k,N}u_N\rangle$$ is $O(N^{-\infty})$
outside $(U_{j,N}\cap U_{k,N})^3$. Moreover, $S_N\chi_{k,N}u_N$ is
$O(N^{-\infty})$ outside $\{h\geq N^{-1+\delta_1}\}$ (because of Proposition \ref{prop:preloc}). Hence, $$|\langle
S_N\chi_{j,N}u_N,hS_N\chi_{k,N}u_N\rangle| \leq
CN^{-1+\delta_1} \||u_N|^2\|_{L^1(U_{j,N}\cap U_{k,N})}+O(N^{-\infty}).$$

Hence, by Lemma \ref{lem:cutting}, for every $N$, one has
$$\sum_{j\neq k\in J_N}|\langle
\chi_{j,N}u_N,T_N(h)\chi_{k,N}u_N\rangle|\leq
CN^{-1+\delta_1}N^{-\frac{\delta_0}{2}}\|u_N\|_{L^2}^2+O(N^{-\infty}).$$
(Recall $|J_N|$ has polynomial growth in $N$.)

On the other hand, the following holds:
$$\sum_{j\in J_N}\langle \chi_{j,N}u_N,T_N(h)\chi_{j,N}u_N\rangle \geq
(\mu_{\min}N^{-1}-CN^{-1-\epsilon})\sum_{j\in
  J_N}\|S_N\chi_{j,N}u_N\|_{L^2}^2.$$

Since $S_N\chi_{j,N}u_N$ and $S_N\chi_{k,N}u_N$ are almost orthogonal for
$j\neq k$, one has $$\sum_{j\in
  J_N}\|S_N\chi_{j,N}u_N\|_{L^2}^2\geq (1-CN^{-\epsilon})\|u_N\|_{L^2}^2.$$

Then, choosing $\delta_1<\frac{\delta_0}{2}$ allows us to conclude:
$$\langle u_N,T_N(h)u_N\rangle \geq N^{-1}(\mu_{\min}-CN^{-\epsilon})\|u_N\|_{L^2}^2.$$
\end{proof}

Note that, in the last proof, it is essential that we know beforehand
that $u_N$ is $O(N^{-\infty})$ on $\{h\geq N^{-1+\delta}\}$ for
every $\delta>0$. This was achieved by picking $u_N$ as the unique
minimizer of $\langle u, T_N(h)u\rangle$ under $\|u\|=1$, in which
case $u_N$ is an eigenfunction of $T_N(h)$.

\begin{rem}
Proposition \ref{prop:Melin} only relies on elementary properties of the Szeg\H{o}
kernel and Toeplitz operators (that is, Propositions \ref{prop:ma4.1}
and \ref{prop:universal}). As such, it extends readily to more general
contexts of quantizations, such as Spin$^c$-Dirac
\cite{ma_holomorphic_2007} (up to a
modification in the definition of $\mu_{\min}$).

The condition of polynomial growth near zero is satisfied for every
analytic symbol, in particular, it is true for spin systems considered
in Section 8.
\end{rem}
\subsection{Pseudo-locality of the resolvent}
\begin{prop}
\label{prop:plr}
  Let $h$ and $\mu_{\min}$ be as in Proposition
  \ref{prop:Melin} and $\epsilon_0>0$ small enough, depending only on
  $M$. Then, for every $0\leq\epsilon<\epsilon_0$ and every $c>0$, the operator
  $T_N(h-N^{-1}\mu_{\min}+cN^{-1-\epsilon})$ is invertible (as a positive
  definite operator on a finite-dimensional space). Its inverse
  $R$ is
  pseudo-local: if $a$ and $b$ are smooth functions with $\supp(a)\cap
  \supp(b)=\emptyset$, then $$T_N(a)RT_N(b)=O_{L^2\to L^2}(N^{-\infty}).$$
\end{prop}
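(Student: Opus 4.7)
The invertibility is immediate from Proposition \ref{prop:Melin}. Taking $\epsilon_0$ equal to the $\varepsilon$ appearing there, for any $\epsilon<\epsilon_0$ one has
\[
 H := T_N(h - N^{-1}\mu_{\min} + cN^{-1-\epsilon}) \geq cN^{-1-\epsilon} - C'N^{-1-\epsilon_0} \geq \tfrac{c}{2}N^{-1-\epsilon}
\]
for $N$ large, so $R := H^{-1}$ exists and $\|R\|_{L^2\to L^2} \leq \tfrac{2}{c}N^{1+\epsilon}$.

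For pseudo-locality, my plan is a commutator bootstrap. The starting identity, derived from $RH = HR = I$ (the scalar shift in $H$ drops out of all commutators), is
\[
 T_N(a)\, R\, T_N(b) \;=\; R\, T_N(a)\, T_N(b) \;-\; R\,[T_N(a),T_N(h)]\, R\, T_N(b).
\]
Two ingredients from the Toeplitz composition calculus will be used repeatedly: \textbf{(i)} $T_N(c)\,T_N(b) = O_{L^2\to L^2}(N^{-\infty})$ whenever $\supp c \cap \supp b = \emptyset$, because every coefficient $C_j(c,b)$ of the asymptotic expansion is built from derivatives of $c$ and $b$ evaluated at the same point; \textbf{(ii)} $[T_N(c),T_N(h)] = \sum_{j=1}^{K} N^{-j}\,T_N(c_j) + O_{L^2\to L^2}(N^{-K-1})$, with each $c_j$ supported in $\supp c$.

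Applied to the first term on the right-hand side, (i) gives $O(N^{-\infty})$ (the polynomial size of $\|R\|$ is negligible). Applied to the commutator in the second term, (ii) produces a finite sum of expressions $N^{-j}\, R\, T_N(c_j)\, R\, T_N(b)$ with each $c_j \subset \supp a$ still disjoint from $\supp b$; these have the same structural form as the original, and the identity can be applied again. Iterating, every ``closed'' branch of the form $R^{k}\,T_N(\tilde c)\,T_N(b)$ is $O(N^{-\infty})$ by (i), and the truncation remainders from (ii) are $O(N^{-M})$ for any $M$ provided $K$ is chosen large enough at each step.

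The main obstacle is the bookkeeping of this iteration: each commutator expansion gains one factor of $N^{-1}$, but each iteration step introduces an additional factor of $R$ of size $N^{1+\epsilon}$. The restriction $\epsilon<\epsilon_0$ in the statement is precisely what allows, by tuning the iteration depth and the truncation order to the target exponent $M$, the cumulative vanishing from (i) at the branch endpoints to overtake the cumulative $R$-growth. The conclusion is $T_N(a)\,R\,T_N(b) = O(N^{-M})$ for every $M$, which is the pseudo-locality claim.
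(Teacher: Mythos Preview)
There is a genuine gap in the commutator bootstrap. Track one branch of your iteration: after $m$ steps the surviving ``open'' term has the form
\[
N^{-(j_1+\cdots+j_m)}\, R^{m}\, T_N(\tilde a)\, R\, T_N(b),\qquad j_i\ge 1,
\]
whose operator norm is controlled only by $C_m\,N^{-m}\,\|R\|^{m+1}\le C_m\,N^{-m}\,N^{(m+1)(1+\epsilon)}=C_m\,N^{1+(m+1)\epsilon}$. This \emph{grows} with $m$: each pass through the identity buys $N^{-1}$ from the commutator but costs $N^{1+\epsilon}$ from the extra resolvent, a net loss of $N^{\epsilon}$ per step (and no gain even when $\epsilon=0$). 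The closed branches $R^{k}T_N(\tilde c)T_N(b)$ are indeed $O(N^{-\infty})$ and the truncation tails can be pushed down, but the iteration never terminates on its own: at every finite stage there remain open terms of the above type, and the only a~priori bound you have on $T_N(\tilde a)RT_N(b)$ is the trivial one. Your sentence invoking ``$\epsilon<\epsilon_0$'' does not repair this; that restriction enters only through the Melin estimate guaranteeing invertibility, and gives no leverage on the ratio $\|[T_N(a),T_N(h)]\|\cdot\|R\|\sim N^{\epsilon}$. In short, a pure commutator argument cannot work when the resolvent norm dominates the commutator gain, which is exactly the low-energy regime here.

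The paper's proof avoids this entirely. It reformulates pseudo-locality as: if $Hu_N=O(N^{-\infty})$ on an open set $V$, then $u_N=O(N^{-\infty})$ on any $U\subset\subset V$. It first shows $u_N$ is negligible on $V_1\cap\{h\ge N^{-1+\delta}\}$ by the elementary positivity $\int_V \overline u\,T_N(h)u\ge CN^{-1+\delta}\int_{V_1\cap\{h\ge N^{-1+\delta}\}}|u|^2$, and then reruns the local-to-global argument of Proposition~\ref{prop:Melin} (the cutting Lemma~\ref{lem:cutting} plus the local Melin inequality) on a neighbourhood of $U$. The mechanism is positivity of $H$ on small balls, not commutator decay, and this is what produces an actual gain.
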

\begin{proof}
The proposition may be reformulated this way: if $U \subset \subset V$
are two open sets in $M$ and a sequence $(u_N)_{N\geq 1}$ of
normalised states is such that
$T_N(h-N^{-1}\mu_{\min}+cN^{-1-\epsilon})u_N=O_{L^2}(N^{-\infty})$ on $V$,
then we wish to prove that $u_N=O_{L^2}(N^{-\infty})$ on $U$. Here
$$\supp(a)\subset \subset U \subset \subset V \subset \subset
(M\setminus \supp(b)).$$

We first remark that, for every $\delta$, and for every
$U\subset\subset V_1\subset
\subset V$, the following holds:
$$\int_{V}\overline{u}\,T_N(h)u\geq CN^{-1+\delta}\int_{V_1\cap\{h\geq
  N^{-1-\delta}\}}|u|^2.$$ Hence, $u$ is $O(N^{-\infty})$ on $V_1\cap\{h\geq
N^{-1-\delta}\}$ for every $\delta$. 

We are now able to repeat the global part of the proof of Proposition \ref{prop:Melin} by
cutting a neighbourhood of $U$ into small pieces, hence the claim.
\end{proof}

%%% Local Variables:
%%% mode: latex
%%% TeX-master: "Article"
%%% End:

\section{Proof of Theorem A}
\subsection{Estimate of the first eigenvalue}
\begin{prop}\label{prop:upest}
  Let $h\geq 0$ with $\min(h)=0$ and let $\mu_{\min}$ be as in Proposition \ref{prop:Melin}. Then
  there exists $\epsilon>0$ such that
$$\inf Sp(T_N(h)) \leq N^{-1}\mu_{\min} + N^{-1-\epsilon}.$$
\end{prop}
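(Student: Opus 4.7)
The approach is a standard quasimode construction: build a trial state localized near a point $P_0$ realizing the minimum of the Melin function, by transporting a quasi-ground state of the Bargmann model $T_1(Q)$ to $H_N(X)$ via a normal map. First, by continuity of $\mu$ on the compact set $\{h=0\}$, pick $P_0$ with $\mu(P_0)=\mu_{\min}$, and choose a normal map $\rho$ at $P_0$. In these coordinates, $h\circ\pi\circ\rho(z) = Q(z) + r(z)$, where $Q$ is the semi-positive quadratic form with $\mu(Q) = \mu_{\min}$ (in the sense of Definition \ref{def:mu}) and $r(z) = O(|z|^3)$.

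Fix a small $\delta>0$. Build a unit vector $\phi\in B_1$ with $\langle \phi,T_1(Q)\phi\rangle \leq \mu_{\min} + N^{-2\delta}$ and Schwartz-class decay on scale $\leq N^{\delta}$: when $Q>0$, take the genuine Gaussian ground state; when $Q$ is degenerate, take the tensor product of the ground state on $(\Ker Q)^{\perp}$ with a Gaussian of scale $N^{\delta/2}$ in $\Ker Q$, on which $Q$ vanishes. Rescale to $B_N$ via $\phi_N(z) := N^{n/2}\phi(\sqrt{N}z)$; the unit vector $\phi_N$ is essentially supported in a ball of radius $N^{-1/2+\delta}$ and satisfies $\langle \phi_N,T_N(Q)\phi_N\rangle \leq N^{-1}\mu_{\min} + N^{-1-2\delta}$. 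Multiply by a smooth cutoff $\chi_N$ supported in a ball slightly larger than the support scale (error $O(N^{-\infty})$), lift to $X$ via the normal map as $u_N := \rho^*(\chi_N\phi_N)$, and define the trial state $\psi_N := S_N u_N \in H_N(X)$.

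By Proposition \ref{prop:universal}, $\|\psi_N - \rho^*\Pi_N(\chi_N\phi_N)\|_{L^2} = O(N^{-1/2+\epsilon'})$ for arbitrarily small $\epsilon'>0$; since $\phi_N\in B_N$ and $(1-\chi_N)\phi_N = O(N^{-\infty})$ in $L^2$, one has $\Pi_N(\chi_N\phi_N) = \phi_N + O(N^{-\infty})$. Hence $\|\psi_N\|^2 = 1 + o(1)$. For the energy, the same universality comparison gives
$$\langle \psi_N,T_N(h)\psi_N\rangle = \langle\psi_N,h\psi_N\rangle_{L^2(X)} = \langle\rho^*\phi_N,h\,\rho^*\phi_N\rangle + O(N^{-1-\epsilon}),$$
and splitting $h = Q + r$ yields a $Q$-contribution equal, up to a subleading Jacobian correction, to $\langle \phi_N,T_N(Q)\phi_N\rangle \leq N^{-1}\mu_{\min} + N^{-1-2\delta}$, and an $r$-contribution bounded by $\sup_{|z|\leq N^{-1/2+\delta}}|r(z)| \cdot \|\phi_N\|^2 = O(N^{-3/2+3\delta})$. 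Taking $\delta$ small enough and dividing by $\|\psi_N\|^2 = 1+o(1)$ gives the claimed bound for some $\epsilon>0$. The main obstacle is the construction of $\phi$ in the degenerate case, where $T_1(Q)$ has no true ground state; balancing the spread of $\phi$ in $\Ker Q$ (needed to approach the infimum $\mu_{\min}$) against the cutoff size (required for Proposition \ref{prop:universal} to apply) is what forces the intermediate scale $N^{\delta/2}$ and produces the small loss $N^{-\epsilon}$ in the final estimate.
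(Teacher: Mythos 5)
Your proposal is correct and follows essentially the same route as the paper: a quasimode at a point realizing $\mu_{\min}$, built from an approximate ground state of the quadratic model that is spread to an intermediate scale in the degenerate directions, transported by the normal map, and compared to the flat kernel $\Pi_N$ via Proposition \ref{prop:universal}. The only divergence is in how that approximate ground state is produced: where you hand-build it as a tensor product (note that the splitting $B_1=B((\Ker Q)^{\perp})\otimes B(\Ker Q)$ does not literally exist when $\Ker Q$ is isotropic and paired with free modes $p_i^2$, since neither factor is then a symplectic subspace — one should instead split along a symplectic diagonalisation of $Q$), the paper takes the genuine ground state of the regularised form $Q+N^{-\alpha}|\cdot|^2$ and quantifies the resulting energy loss by the H\"older continuity of $Q\mapsto\mu(Q)$, which yields the same squeezed Gaussian with less bookkeeping.
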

\begin{proof}
  Let $P_0\in M$ achieve the minimal value $\mu_{\min}$, let $\rho$ be
  a
  normal map around $P_0$, and let $\delta>0$ and $C>0$ be such that, for every $N$, for
  every $u$ supported on $B(P_0,N^{-\frac 12+\delta})\times \S^1$, one has
  $\|(S_N-\rho^*\Pi_N)u\|\leq C N^{-\frac 14}$. Without loss of generality
  $\delta<\frac 18$.

  Pick $\alpha \geq 2 \delta$, and let $Q$ denote half of the Hessian of $h$
  at $P_0$. Then, since the function $Q\mapsto \mu(Q)$ is H\"older
  continuous with exponent $\frac{1}{2n}$ \cite{melin_lower_1971}, one has
  $$\mu(Q+N^{-\alpha}|\cdot|^2)\leq \mu(Q)+CN^{-\frac{\alpha}{2n}}.$$
  Let $v_N$ denote a normalised ground state of
  $T_N(Q+N^{-\alpha}|\cdot|^2)$, then $v_N$ is $O(N^{-\infty})$
  outside $B(0,N^{-\frac 12 + \delta})$.
  Then $\langle \rho^*v_N,T_N(h)\rho^*v_N\rangle =
  \mu(Q+N^{-\alpha}|\cdot|^2)+O(N^{-\frac 54 +
    2\delta}) \leq \mu_{\min}N^{-1}+O(N^{-1-\epsilon})$ for some $\epsilon>0$.
\end{proof}

\subsection{End of the proof}
We can now conclude the proof of Theorem A. Let $a\in C^{\infty}(M)$
be supported away from the set of points achieving $\mu_{\min}$. Let
$\tilde{h}\in C^{\infty}(M)$ be such that $\tilde{h}=h$ on the support
of $a$, and such that $\mu_{\min}(\tilde{h})>\mu_{\min}(h)$.
Then $T_N(\tilde{h}-N^{-1}\mu_{\min}(h))$ is invertible because of
the Melin estimate of Proposition \ref{prop:Melin}. Its inverse $R$ is
pseudolocal, with norm $O(N)$; in particular, for every integer
$k$,
\[
  T_N(a)=T_N(h-N^{-1}\mu_{\min}(h))^kR^kT_N(a)+O(N^{-\infty}).
\]
If
$u_N$ is a sequence of unit ground states of $T_N(h)$, then by
Propositions \ref{prop:Melin} and \ref{prop:upest} there holds
\[
  |\langle u_N,T_N(h-N^{-1}\mu_{\min}(h)) u_N\rangle|\leq
  CN^{1+\epsilon}.
\]
Hence, $$\langle u_N,T_N(a)u\rangle \leq C^kN^{-k-k\epsilon}\langle
u,R^kT_N(a)u_N.\rangle$$ In
particular, for every integer k, one has $$\langle
u_N,T_N(a)u_N\rangle=O(N^{-k\epsilon}),$$ which concludes the proof of
Theorem A. 
%%% Local Variables:
%%% mode: latex
%%% TeX-master: "Article"
%%% End:

\section{The regular case}
In this Section we prove Theorem B, and establish the necessary
material for the Weyl asymptotics of Section 7.

We first study a problem of symplectic geometry, which consists in
finding a normal form for a non-negative function vanishing at order
$2$ on an isotropic submanifold. Then, we apply a Quantum Map to find
an expansion of the first eigenvalue and eigenfunction.
\subsection{A convenient chart}
We begin with the following fact:
\begin{prop}\label{prop:quadred}
  Let us endow $\R^{2n}$ with the canonical symplectic structure.

  Let $Q:\R^d\mapsto S_{2n}^+(\R)$ be a smooth $d$-parameter family
  of semi-positive quadratic forms. Suppose $rank\ Q=r$ is a constant
  function and suppose that, for every $t\in \R^d$, the space
  $\ker(Q(t))$ is isotropic. In particular, $\ker Q$ is a smooth map into the set of isotropic
  subspaces of $\R^{2n}$.

Then there is a smooth family $(e_1,f_1,\ldots,e_n,f_n):\R^d\mapsto (\R^n)^n$ of
symplectic bases, and smooth functions
$\lambda_{i}:\R^d\mapsto \R^*_+$, $r+1\leq i \leq n$, such that
$$Q\left(\sum_{i=1}^n q_ie_i+p_if_i\right)=\sum_{i=1}^r
p_i^2+\sum_{i=r+1}^{n}\lambda_i(p_i^2+q_i^2).$$

In particular, under the conditions above, the function $\mu(Q)$ is smooth.
\end{prop}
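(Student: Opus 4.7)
The strategy is to decompose $Q(t)$ into three symplectically decoupled pieces---a zero block on $\ker Q(t)$, a flat block $\sum p_i^2$ dual to the kernel, and an elliptic block on the symplectic complement---and then apply a parametric Williamson theorem to the last block.

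First I would use the constant rank hypothesis to produce, locally in $t$, a smooth frame $e_1(t),\ldots,e_r(t)$ of $\ker Q(t)$. Since $\ker Q$ is isotropic, a parametric symplectic Gram--Schmidt procedure yields additional smooth vectors $f_1(t),\ldots,f_r(t)$ with $\omega(e_i,f_j)=\delta_{ij}$ and $\omega(f_i,f_j)=0$; this also determines smoothly the symplectic complement $W(t) := \Span(e_i,f_i : i\leq r)^\omega$, of dimension $2(n-r)$.

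Next I would exploit the fact that, since $Q\geq 0$ vanishes on $\ker Q$, the kernel is a radical for the associated bilinear form: $Q(e_i,\cdot)\equiv 0$. In the basis above, $Q$ is therefore independent of the coordinates dual to $\ker Q$, and one can write
\[
Q(v) \;=\; A(p') + 2B(p',w) + C(w),
\]
where $p' = (p_1,\ldots,p_r)$ are the coordinates along $(f_i)_{i\leq r}$, $w$ is the $W$-component, and $A,B,C$ are smooth in $t$; the rank condition forces $A$ and $C$ to be positive definite. To kill the cross term, I would replace $f_i$ by $\tilde f_i := f_i + w_i + k_i$, where $w_i\in W$ is the unique solution of $C(w_i,\cdot)=-B(e_i,\cdot)$ on $W$ (solvable smoothly by positive definiteness of $C$) and $k_i\in \ker Q$ is the explicit correction that restores isotropy of $\Span(\tilde f_i)_{i\leq r}$ and the symplectic basis relations; both prescriptions are smooth linear systems. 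After this change $Q$ becomes block-diagonal between $\Span(\tilde f_i)_{i\leq r}$ and $W$. A further $GL(r)$ change on $(\tilde f_i)$, with the dual transformation on $(e_i)$, normalises $A$ to the identity and produces the $\sum_{i=1}^r p_i^2$ contribution.

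Finally, on the symplectic subspace $W(t)$ the restriction $C(t)$ is positive definite, so a parametric Williamson theorem yields smooth symplectic coordinates on $W(t)$ and smooth positive functions $\lambda_{r+1}(t),\ldots,\lambda_n(t)$ with $C=\sum_{i>r}\lambda_i(p_i^2+q_i^2)$; smoothness of $\mu(Q)$ then follows from the explicit formula expressing $\mu$ in terms of $\tr(Q)$ and the $\lambda_i$. \emph{The main obstacle} is this last step: individual symplectic eigenvalues of $C(t)$ need not be smooth across crossings, so one must either work locally around points of simple symplectic spectrum, or, for the smoothness of $\mu(Q)$ alone, observe that the relevant symmetric functions of the $\lambda_i$ depend smoothly on $C$ even without a global smooth simultaneous diagonalisation. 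Both variants are adequate for the applications in Sections~5--7.
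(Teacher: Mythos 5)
Your strategy is essentially the paper's: split off $\ker Q$ together with a dual family $(f_i)_{i\leq r}$, reduce $Q$ to a positive definite form on the residual symplectic subspace $W$, and invoke a parametric Williamson theorem there. The execution of the degenerate part differs: the paper peels off one pair $(e_1,f_1)$ at a time by induction, producing $f_1$ abstractly from the fact that a semipositive form admits no coisotropic subspace, whereas you treat the whole kernel block at once and eliminate the cross term $B(p',w)$ by a Schur-complement substitution; both are fine, and yours is arguably more transparent. Two points deserve attention. First, your correction $\tilde f_i=f_i+w_i+k_i$ with $k_i\in\ker Q$ can restore the isotropy of $\Span(\tilde f_i)_{i\leq r}$, but it cannot restore $\omega(\tilde f_i,w)=0$ for $w\in W$, since kernel vectors are symplectically orthogonal to $W$; you must additionally replace each $w\in W$ by $w+\sum_i\omega(w_i,w)e_i$. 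These corrections lie in $\ker Q$, so they change neither the values of $Q$ nor the restriction of $\omega$ to $W$ — a harmless but necessary step (also, the defining equation should read $C(w_i,\cdot)=-B(f_i,\cdot)$, as $Q(e_i,\cdot)\equiv 0$). Second, the obstacle you flag at the Williamson step is genuine and is in fact present in the paper's own proof, which asserts without justification a smooth orthogonal block-diagonalisation of the antisymmetric family $M^{1/2}JM^{1/2}$; for $d\geq 2$ this fails at symplectic eigenvalue crossings, a caveat the paper only makes explicit later (Section 6). Your fallback for the last assertion — that $\mu(Q)$ is an explicit combination of $\tr Q$ and $\sum_{i>r}\lambda_i$, and that this sum is smooth because it is the trace of $-iJM$ against the spectral projector onto the eigenvalues with positive imaginary part, which varies smoothly as long as the rank is constant — is correct and actually repairs the smoothness of $\mu$ without any no-crossing hypothesis. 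It does not, however, produce the smooth symplectic frame itself, which is what Proposition \ref{prop:coords} consumes; there one should either assume simple symplectic spectrum or work with the smooth spectral subbundles rather than individual eigenvectors.
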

In the study of the Hamiltonian dynamics related to $Q$,
the vectors $f_i,1\leq i \leq r$ are called \emph{slow
  modes}. They correspond to the motion of a free particle. The vectors $(e_i,f_i),r+1\leq i \leq n$ are
called \emph{fast modes} and correspond to harmonic oscillations; the
associated values $\lambda_i,r+1\leq i \leq n$ are the
\emph{symplectic eigenvalues} of $Q$.

Here, the \emph{zero modes} (that is, the kernel of $Q$) are supposed
to form an isotropic subspace; this can be generalised, but
$\ker Q$ must have a constant symplectic rank in order for such a
result to apply.

\begin{proof}
  Let us construct a symplectic basis
  $(e_1,\ldots,e_n,f_1,\ldots,f_n)$ of $\R^{2n}$,
 depending
  smoothly on the parameters, on which the quadratic form $Q$ is
  diagonal, with the desired diagonal terms. We proceed by partial
  induction: if $Q$ is degenerate, we construct the first pair
  $(e_1,f_1)$ with $e_1\in \Ker Q$, hence the reduction to $Q'$ on
  $\R^{2(n-1)}$ with $\dim \Ker Q'=r-1$. If $Q$ is non-degenerate, we
  use a standard construction of the full basis in one step.

  Suppose $r>0$.
  Pick $e_1\in \ker Q$ smoothly depending on the parameters. The quadratic form $Q$
  is degenerate, but it is a
  well-known fact that it has no co-isotropic subspaces: if a subspace
  $F$ is such that \[\{e\in \R^{2n},\,\forall f\in
  F,\,Q(e+f)=Q(e)+Q(f)\}\subset F,\] then $F=\R^{2n}$.
  
  Hence, with $F=\{z\in \R^{2n},\langle z,Je_1\rangle=0\}$ denoting the
  symplectic orthogonal of $e_1$, there exists $f_1$ such that:
\begin{align*}
  \langle e_1,Jf_1\rangle &=1\\
\forall z\in F,\,Q(z+f_1)&=Q(z)+Q(f_1).
\end{align*}
The vector $f_1$ again depends smoothly on the parameters. As
$\lambda=Q(f_1)$ is far from zero on compact sets (recall that $\ker Q$ is
a continuous family of isotropic subspaces), changing $e_1$ into
$\sqrt{\lambda}e_1$ and $f_1$ into $f_1/\sqrt{\lambda}$ yields two
smooth vectors with the supplementary condition that $Q(f_1)=1$.

If one can find a smooth symplectic basis
$(e_2,\ldots,e_n,f_2,\ldots,f_n)$ of the symplectic orthogonal of
$\Span(e_1,f_1)$, which diagonalises the restriction of $Q$ with
diagonal values as above, then
completing this basis with $e_1$ and $f_1$ concludes the proof.

If $r=0$, let $M$ be the matrix of $Q$ in the (symplectic) canonical basis. Then $M^{\frac 12}$ is a smooth family of symmetric
matrices, so that $M^{\frac 12}JM^{\frac 12}$ is a smooth family of
antisymmetric matrices, where $J$ is the matrix of the standard
symplectic form in the canonical basis. Hence, there is a smooth family $U$ of
orthogonal matrices, and a smooth family $D$ of positive diagonal
matrices, such that $$U^TM^{\frac 12}JM^{\frac
  12}U=\begin{pmatrix}0&D\\-D&0\end{pmatrix}.$$

In particular, with $A=\begin{pmatrix}D^{\frac 12}&0\\0&D^{\frac 12}\end{pmatrix}$,
  one has
$$(AU^TM^{\frac 12}J)M(-JM^{\frac
  12}UA)=\begin{pmatrix}D&0\\0&D\end{pmatrix},$$
and $$(AU^TM^{\frac 12}J)J(-JM^{\frac 12}UA)=J.$$

Hence, the desired symplectic matrix is $AU^TM^{\frac 12}J$, which
depends smoothly on the parameters. This concludes the proof.
\end{proof}

Recall the following well-known application of Moser's principle:
\begin{prop}\label{prop:moser}
  Let $S$ be a symplectic manifold and $Z\subset S$ be a smooth
  $d$-dimensional submanifold
  of constant symplectic rank. Then, in a neighbourhood (in $S$) of any point
  in $Z$, there is a symplectomorphism $\rho$ onto a neighbourhood
   $0$ of
  $\R^{2n}$, such that $\rho(Z)$ is a piece of linear subspace.
\end{prop}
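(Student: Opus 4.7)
The plan is the classical relative Moser trick. First, I would pick any local chart $\rho_0$ sending a neighborhood of the chosen point in $Z$ into a neighborhood of $0$ in $\R^{2n}$ so that $\rho_0(Z)$ sits inside a linear subspace $V$ through the origin (this is elementary: the submanifold structure provides coordinates adapted to $Z$). Setting $\omega_1 := (\rho_0^{-1})^*\omega$, we are reduced to producing a local diffeomorphism of $\R^{2n}$ fixing $V$ (as a set) and sending $\omega_{\mathrm{std}}$ to $\omega_1$.

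Second, I would adjust by a linear transformation so that the two symplectic forms agree on $V$. Writing $i: V \hookrightarrow \R^{2n}$ for the inclusion, the constant symplectic rank hypothesis means that $i^*\omega_1$ has constant rank on $V$. Applying the antisymmetric analogue of Proposition~\ref{prop:quadred} (same induction on the dimension of the kernel, producing a smooth symplectic frame that diagonalises $i^*\omega_1$) yields a linear map $L \in GL(2n,\R)$ preserving $V$ and such that, after replacing $\omega_1$ by $L^*\omega_1$, one has $i^*\omega_1 = i^*\omega_{\mathrm{std}}$ on a neighborhood of $0$ in $V$, together with $\omega_1(0) = \omega_{\mathrm{std}}(0)$ as antisymmetric forms on $T_0\R^{2n}$.

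Third, I would apply Moser's homotopy argument relative to $V$. Set $\omega_t = (1-t)\omega_{\mathrm{std}} + t\omega_1$ for $t \in [0,1]$; this is closed, and nondegenerate near $0$ since the two forms coincide at $0$. Since $\omega_1 - \omega_{\mathrm{std}}$ is closed and vanishes when pulled back to $V$, the relative Poincar\'e lemma provides a $1$-form $\alpha$ on a neighborhood of $0$ with $d\alpha = \omega_1 - \omega_{\mathrm{std}}$ and $i^*\alpha = 0$. Define the time-dependent vector field $X_t$ by $\iota_{X_t}\omega_t = -\alpha$, and let $\psi_t$ be its flow. Because $\alpha$ vanishes on $V$ and the $\omega_t$ agree there as well, we have $X_t|_V = 0$, so $\psi_t$ preserves $V$ setwise, and the standard computation gives $\psi_1^*\omega_1 = \omega_{\mathrm{std}}$. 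The desired symplectomorphism is then $\rho := \rho_0 \circ L \circ \psi_1$.

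The main obstacle is the second step: matching $\omega_1$ and $\omega_{\mathrm{std}}$ not just at the origin but along the whole of $V$ near $0$, which is what makes the relative Poincar\'e lemma applicable in step three. This is where the constant symplectic rank hypothesis is essential, and where one must transpose the parametric diagonalisation argument of Proposition~\ref{prop:quadred} from the semi-positive symmetric setting to the antisymmetric one.
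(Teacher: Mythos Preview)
The paper does not give a proof of this proposition; it is introduced as a ``well-known application of Moser's principle'' and used as a black box. So there is no paper argument to compare against, only the correctness of your own.

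Your overall architecture --- flatten $Z$ in a chart, match the two forms along $V$, then run the relative Moser deformation --- is the right one, and step~3 is essentially fine: the homotopy-formula primitive $\alpha$ obtained from the retraction onto $V$ vanishes \emph{pointwise} along $V$ (not merely after pullback) as soon as $i^*(\omega_1-\omega_{\mathrm{std}})=0$, so $X_t|_V=0$ and the flow fixes $V$.

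The genuine gap is step~2. You claim that a \emph{single} linear map $L\in GL(2n,\R)$ arranges $i^*\omega_1=i^*\omega_{\mathrm{std}}$ on a neighbourhood of $0$ in $V$. This cannot work: $i^*\omega_{\mathrm{std}}$ is a constant $2$-form on the linear space $V$, whereas $i^*\omega_1$ is a closed $2$-form of constant rank that genuinely varies from point to point, and no fixed linear change of coordinates turns a nonconstant form into a constant one. What the antisymmetric analogue of Proposition~\ref{prop:quadred} actually outputs is a \emph{smoothly varying family} of frames over $V$ --- a bundle map, not an element of $GL(2n,\R)$. Promoting that family to an honest diffeomorphism of a neighbourhood of $0$ in $\R^{2n}$ that preserves $V$ and pulls $i^*\omega_1$ back to $i^*\omega_{\mathrm{std}}$ is itself a nontrivial step (it is essentially the Darboux theorem for closed constant-rank $2$-forms on $V$, again proved by a Moser argument). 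You rightly flag this as the main obstacle in your final paragraph, but the execution you describe --- a single $L$ --- does not clear it; you must replace $L$ by a diffeomorphism built from the parametric frame, and say how.
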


Using the two previous Propositions, we will prove the normal form for
miniwells on isotropic submanifolds:
\begin{prop}\label{prop:coords}
  Let $h$ be a smooth non-negative function on $M$, which vanishes on
  an isotropic manifold $Z$ of dimension $r$ with everywhere
  non-degenerate transverse Hessian.

Near any point of $Z$, there is a symplectomorphism $\rho$ into
$\R^{2r}_{q,p}\times \R^{2(n-r)}_{x,\xi}$, a smooth function
$Q_S$ from $\R^r$ into the set of positive quadratic forms of
dimension $r$, and $n-r$ smooth positive functions $(\lambda_i)_{1\leq
  i \leq n-r}$ such that:
$$h\circ
\rho^{-1}=\sum_{i=1}^{n-r}\lambda_i(q)(x_i^2+\xi_i^2)+Q_S(q)(p)+O_{(x,\xi,p)\to
  0}(|x|^3+|\xi|^3+|p|^3).$$
\end{prop}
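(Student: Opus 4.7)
The strategy is to first straighten $Z$ with a symplectic chart, then use Proposition \ref{prop:quadred} to diagonalise the transverse Hessian fibrewise, and finally upgrade the resulting pointwise linear normal form to a true symplectomorphism via a relative Moser argument.

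By Proposition \ref{prop:moser}, since $Z$ is isotropic (constant symplectic rank zero), there are symplectic coordinates $(q,p,x,\xi) \in \R^r \times \R^r \times \R^{n-r} \times \R^{n-r}$ near any chosen point of $Z$ in which the standard $\omega_0 = dq \wedge dp + dx \wedge d\xi$ is used and $Z = \{p = 0,\,x = 0,\,\xi = 0\}$. Denote by $Q(q)$ half the Hessian of $h$ at $(q,0,0,0)$; by the minimality of $h$ on $Z$ and the non-degeneracy of the transverse Hessian, $q \mapsto Q(q)$ is a smooth family of semi-positive quadratic forms on $\R^{2n}$ whose kernel equals exactly $T_q Z$, the fixed isotropic $q$-direction. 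Proposition \ref{prop:quadred} then supplies a smooth family of symplectic bases $(e_i(q), f_i(q))_{1 \leq i \leq n}$ and smooth positive functions $(\lambda_i(q))_{r < i \leq n}$ putting $Q(q)$ in the normal form $\sum_{i \leq r} p_i^2 + \sum_{i > r} \lambda_i (p_i^2 + q_i^2)$.

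Since $(e_i(q))_{i \leq r}$ spans $T_q Z$, we may write $e_i(q) = \sum_j a_{ij}(q)\,\partial_{q_j}$ with $a(q) \in \mathrm{GL}_r(\R)$ depending smoothly on $q$. Replacing $e_i(q)$ by $\partial_{q_i}$ and simultaneously $f_i(q)$ by $f'_i(q) := \sum_j a_{ji}(q) f_j(q)$ for $i \leq r$ preserves the symplectic basis property (a direct linear algebra check) and converts the slow part of the normal form to $Q_S(q)(p) := p^T (a(q)^T a(q))\, p$, a positive definite quadratic form depending smoothly on $q$; the fast part is unchanged. One then defines
\[
\Phi_0(q,p,x,\xi) := (q,0,0,0) + \sum_{i=1}^{r} p_i f'_i(q) + \sum_{j=1}^{n-r}\bigl(x_j e_{r+j}(q) + \xi_j f_{r+j}(q)\bigr),
\]
a local diffeomorphism fixing $Z$ pointwise whose differential at each point of $Z$ sends the coordinate basis to a symplectic basis; in particular $\omega_1 := \Phi_0^*\omega_0$ agrees with $\omega_0$ on $TM\big|_Z$.

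A relative Moser argument concludes the proof. By the relative Poincar\'e lemma applied to the closed form $\omega_1 - \omega_0$, there exists a primitive $\beta$ vanishing to second order on $Z$; the time-dependent vector field $X_t$ defined by $\iota_{X_t}\omega_t = -\beta$ (with $\omega_t = (1-t)\omega_0 + t\omega_1$) is itself $O(\dist(\cdot, Z)^2)$, so its time-$1$ flow $\psi$ satisfies $\psi\big|_Z = \mathrm{id}$, $d\psi\big|_Z = \mathrm{id}$, and $\psi^*\omega_1 = \omega_0$. Setting $\rho^{-1} := \Phi_0 \circ \psi$ yields a symplectomorphism differing from $\Phi_0$ by $O(\dist(\cdot, Z)^2)$. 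Since $h$ vanishes to order two on $Z$, one has $h \circ \rho^{-1} = h \circ \Phi_0 + O(|(p,x,\xi)|^3)$; Taylor expanding $h \circ \Phi_0$ at $(q,0,0,0)$ against $Q(q)$ in the adjusted basis produces exactly the stated normal form, with $\lambda_j(q)$ being $\lambda_{r+j}(q)$ from Proposition \ref{prop:quadred}. The delicate step is the relative Moser argument: one must verify that the primitive $\beta$ can indeed be chosen to vanish to second order along $Z$, so that $\psi$ is tangent to the identity to first order on $Z$ and the resulting correction is absorbed into the $O(|p|^3+|x|^3+|\xi|^3)$ remainder; the rest is bookkeeping built on the two preceding propositions.
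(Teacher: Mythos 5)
Your proof is correct, but the mechanism by which you obtain a genuine symplectomorphism is genuinely different from the paper's. You straighten only $Z$, assemble the full fibrewise linear normal form (slow block rescaled so that $e_i(q)=\partial_{q_i}$, fast block diagonalised) into a single map $\Phi_0$ that is symplectic only along $Z$, and then invoke the relative Poincar\'e lemma plus Moser's trick to correct $\Phi_0$ by a map tangent to the identity to first order on $Z$ --- essentially the proof of Weinstein's isotropic neighbourhood theorem, carrying the Hessian data along. The step you flag as delicate is fine: since $\omega_1-\omega_0$ vanishes pointwise on $T M|_Z$, the homotopy-operator primitive $\beta$ gains one order and is $O(\dist(\cdot,Z)^2)$, so the Moser flow perturbs the $2$-jet of $h$ along $Z$ only at third order. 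The paper instead never leaves the category of exact symplectomorphisms: it first straightens, via two applications of Proposition \ref{prop:moser}, both $Z$ and an auxiliary symplectic submanifold $S$ tangent along $Z$ to the span of the slow modes, and reads off the decoupling $Q_F(q)(x,\xi)+Q_S(q)(p)$ from the fact that $D\rho|_Z$ is simultaneously block-triangular and symplectic; the remaining fibrewise diagonalisation of $Q_F(q)$ is made symplectic by the explicit closed-form correction $(q,p,x,\xi)\mapsto(q,p+f,\phi(q)(x,\xi))$ rather than by a Moser argument. Your route is shorter and rests on a standard general tool; the paper's buys an explicit correction term whose construction (relying only on $\phi_q(0,0)=(0,0)$, as the subsequent remark notes) is reused in the crossing case of Proposition \ref{prop:cross-normal-form}, where the zero set is no longer a smooth manifold and a single relative Moser argument along all of $Z_1\cup Z_2$ would be more awkward to set up.
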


In particular, $Z$ is mapped into
$\{(p,x,\xi)=0\}$.

\begin{proof}
  Let $P_0\in Z$, and let $U$ be a small neighbourhood of $P_0$ in $M$.
  Let us use Proposition \ref{prop:quadred} with set of parameters
  $Z\cap U$ and quadratic form $\Hess(h)$, which is semi-positive
  definite along $Z\cap U$, with kernel of constant symplectic rank.
  
This yields, at each point of $Z$ in a
neighbourhood of $P_0$, a family of $2n$ vector fields which form a
symplectic basis: $$\mathcal{B}=(Q_1,\ldots,Q_r,P_1,\ldots,P_r,X_1,\ldots,
X_{n-r},\Xi_1,\ldots,\Xi_{n-r}),$$ such that $\Span(Q_1,\ldots,Q_r)=TZ$.
In the general setting, this does not give a symplectic change of
variables under which the quadratic form is diagonal along the whole zero set (indeed,
$Q_1,\ldots,Q_r$ are prescribed by the $2n-r$ other vector fields, and
do not commute in general). However, one can separate the slow
variables and the fast variables (first step), then diagonalise the
fast variables (second step).

{\it First step}: Let us define the distribution $\mathcal{F}$ on
$Z\cap U$
as follows: for $x\in Z\cap U$,
\[
  \mathcal{F}_x=\Span(Q_1,\ldots,Q_r,P_1,\ldots,P_r)(x).
\]
Then $T(Z\cap U)\subset \mathcal{F}$. In particular, there is a piece
$S$ of symplectic submanifold of $M$, containing $Z\cap U$, and tangent to
$\mathcal{F}$ on $Z\cap U$.

Using Proposition \ref{prop:moser}, we let $\phi_0$ be a
symplectomorphism from a neighbourhood of $P_0$ in $M$ into a
neighbourhood of $0$ in $\R^{2r}\times \R^{2(n-r)}$, such that $S$ is
mapped into $\R^{2r}\times \{0\}$. Using Proposition \ref{prop:moser}
again, let $\phi_1$ be a symplectomorphism
on a neighbourhood of $0$ in $\R^{2r}$, that maps $\phi_0(Z)$ into
$\R^r\times \{0\}$. Then the map $\tilde{\phi}_1$ acting on
$\R^{2r}\times \R^{2(n-r)}$ by
$$\tilde{\phi_1}(p,q,x,\xi)=(\phi_1(p,q),x,\xi)$$ is a
symplectomorphism. We claim that $\rho=\tilde{\phi}_1\circ \phi_0$
separates the fast variables from the slow variables up to
$O((x,\xi,p)^3)$.

Indeed, consider $D\rho$ at a point of $Z$. Since $\rho$ sends $Z$
into $\R^r\times \{0\}$, and $S$ into $\R^{2r}\times \{0\}$, the matrix
of $D\rho$, from the basis $\mathcal{B}$ to the canonical basis, is of
the form:
$$D\rho=\begin{pmatrix}[cc|cc]A_{qq}&0&0&0\\A_{pq}&A_{pp}&0&0\\ \hline
  A_{xq}&A_{xp}&A_{xx}&A_{x\xi}\\A_{\xi
    q}&A_{\xi p}&A_{\xi x} & A_{\xi \xi}\end{pmatrix}.$$
Moreover, $D_\rho$ is symplectic, so that the bottom left part
vanishes. Hence, 
$$h\circ \rho^{-1}=Q_{F}(q)(x,\xi)+Q_S(q)(p)+O(|p|^3+|x|^3+|\xi|^3),$$ for
some quadratic forms $Q_F$ and $Q_S$. 

Since $h$ vanishes at order exactly $2$ on $Z$, the quadratic forms $Q_F$ and
$Q_S$ are positive definite.

{\it Second step}: It only remains to diagonalise $Q_F$ with a symplectomorphism. In
fact, this is possible without modifying $Q_S$. Indeed, let
$\phi:(\R^r,0)\mapsto Sp(2(n-r))$ be such that, for every $q$ near zero, the
matrix $\phi(q)$ realises a symplectic reduction of $Q_F(q)$, with
eigenvalues $\lambda_{1}(q),\ldots,\lambda_{n-r}(q)$. All of this
depends smoothly on $q$ by Proposition \ref{prop:quadred}. With $J$ the
standard complex structure matrix on $\R^{2(n-r)}$ and $\langle
\cdot,\cdot \rangle$ its standard Euclidian norm, we define, for
every $1\leq i \leq r$, the real function
$$f_i:(q,x,\xi)\mapsto \frac 12 \langle
(x,\xi),(\partial_{q_i}\phi(q)J\phi^t(q))(x,\xi)\rangle.$$

We then define $f:(\R^{2n-r},0)\to \R^r$ as the map with components $f_i$
in the canonical basis. Then a straightforward computation shows that the map
$$\Phi:(q,p,x,\xi)\mapsto(q,p+f,\phi(q)(x,\xi))$$ is a
symplectomorphism. As $f=O_{(x,\xi)\to 0}((x,\xi)^2)$, the 2-jet of
$h\circ \Phi$ at $(q,0,0,0)$ is the same as the 2-jet of
$h\circ((q,p,x,\xi)\mapsto (q,p,\phi(q)(x,\xi)))$,
i.e. $Q_S(q)(p)+\sum_{i=1}^{n-r}\lambda_i(q)(x_1^2+\xi_i^2)$. This
concludes the proof. 
\end{proof}
\begin{rem}
  We corrected the map $$(q,p,x,\xi)\mapsto
  (q,p,\phi_q(x,\xi))$$ into a symplectomorphism by only changing the
  second coordinate. This does not depend on the fact that $\phi_q$
  acts linearly but relies only on $\phi_q(0,0)=(0,0).$
\end{rem}
\subsection{Approximate first eigenfunction}\label{subsec:reg-first-eigenvalue}
Let us quantize, using
Proposition \ref{prop:qmaps}, the symplectic map of Proposition \ref{prop:coords}, and
conjugate with pseudodifferential operators:
\begin{defn}
  For any choice $\mathfrak{S}_N$ of quantization of the map $\rho$ of
  Proposition \ref{prop:coords}, the classical symbol
  $g_{\mathfrak{S}}\sim \sum N^{-i}g_i$ on a neighbourhood $U$ of $0$ in
  $\R^{2n}$ is defined as follows: for any sequence $(v_N)_{N\geq 1}$ with microsupport in
  a compact set of $U$, the following holds:
$$\mathcal{B}_N^{-1}\mathfrak{S}_N^{-1}T_N(h)\mathfrak{S}_N\mathcal{B}_Nv_N=Op_W^{N^{-1}}(g_{\mathfrak{S}})v_N
+ O(N^{-\infty}).$$
\end{defn}
In what follows, we choose an arbitrary quantum map $\mathfrak{S}_N$,
and we write $g$ instead of $g_{\mathfrak{S}}.$
The reason we use Weyl quantization in this subsection is because we will
rely heavily on squeezing operators. The computations are much easier
to follow for this formalism.

The principal and subprincipal symbols of $g$ are explicit at the points
of interest: $g_0=h\circ \rho$ by construction, and $g_1$ is prescibed on $\{g_0=0\}$ by the Melin
estimates for Weyl and Toeplitz quantizations:
\begin{prop}\label{prop:Melin-reg}
  For any $q$ close to $0$, one has $$g_1(q,0,0,0)=\frac 12 \sum_{i=1}^{n-r}\lambda_i(q)+\frac 14 \tr(Q_S(q)).$$
\end{prop}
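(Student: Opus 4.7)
The plan is to identify $g_1(q,0,0,0)$ by computing the low-energy contribution of $T_N(h)$ near the zero $(q,0,0,0)\in Z$ in two ways: directly via the Toeplitz Melin value of Definition \ref{def:mu}, and via the Weyl subprincipal expansion of the conjugated operator $Op_W^{N^{-1}}(g)$. Both computations depend only on the two-jet of $h\circ\rho^{-1}$ at $(q,0,0,0)$, which by Proposition \ref{prop:coords} is the quadratic form
\[
Q_{\mathrm{tot}}(q)(p,x,\xi) = Q_S(q)(p) + \sum_{i=1}^{n-r}\lambda_i(q)(x_i^2+\xi_i^2).
\]

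First, I would compute $\mu(q)$ directly. By the Bargmann--Weyl identification for quadratic symbols recalled in the paper before Definition \ref{def:charac}, $T_1(Q_{\mathrm{tot}}(q))$ is unitarily equivalent to $Op_W^1(Q_{\mathrm{tot}}(q)) + \tfrac{1}{4}\tr(Q_{\mathrm{tot}}(q))$. After an orthogonal change of the $p$-variables diagonalising $Q_S(q)$ (which preserves both trace and Weyl spectrum), the operator $Op_W^1(Q_{\mathrm{tot}}(q))$ splits into $n-r$ harmonic oscillators of the form $\lambda_i(x_i^2+\xi_i^2)$ and $r$ free Laplacians in the diagonalised $p$-direction. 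The example following Definition \ref{def:charac}, specialised to $\alpha=\beta=\lambda_i$, gives a ground-state energy $\tfrac{1}{2}\lambda_i(q)$ for each fast mode, while the Laplacians contribute $0$, so $\inf\Sp(Op_W^1(Q_{\mathrm{tot}}(q))) = \tfrac{1}{2}\sum_i \lambda_i(q)$. Together with $\tr(Q_{\mathrm{tot}}(q)) = \tr(Q_S(q)) + 2\sum_i \lambda_i(q)$, Definition \ref{def:mu} gives
\[
\mu(q) = \sum_{i=1}^{n-r}\lambda_i(q) + \tfrac{1}{4}\tr(Q_S(q)).
\]

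Second, I would test the Weyl operator $Op_W^{N^{-1}}(g_0 + N^{-1}g_1 + \ldots)$ on a squeezed-Gaussian quasimode $\psi_N$ centred at $(q,0,0,0)$, with width $N^{-1/2}$ in $(x,\xi)$ (adapted to the fast oscillators) and $N^{-1/4}$ in $p$ (so that the free-particle kinetic energy contributes $O(N^{-3/2})$, subleading to the $N^{-1}$ fast contribution). A standard semiclassical expansion then yields
\[
\langle \psi_N, Op_W^{N^{-1}}(g)\psi_N\rangle = N^{-1}\biggl(\tfrac{1}{2}\sum_{i=1}^{n-r}\lambda_i(q) + g_1(q,0,0,0)\biggr) + o(N^{-1}).
\]
The pulled-back state $\mathcal{B}_N^{-1}\mathfrak{S}_N^{-1}\psi_N$ lives in the Hardy space and remains concentrated near $P_0$: Proposition \ref{prop:universal} ensures the Bargmann step preserves the expansion at scale $N^{-1/2+\delta}$, and Lemma \ref{lem:qmaps-speed} shows that $\mathfrak{S}_N$ preserves concentration. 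Hence the Toeplitz expectation coincides with the Weyl one modulo $O(N^{-\infty})$; a pointwise version of the upper bound of Proposition \ref{prop:upest} identifies this with $N^{-1}\mu(q) + o(N^{-1})$. Equating delivers $g_1(q,0,0,0) = \mu(q) - \tfrac{1}{2}\sum_i \lambda_i(q) = \tfrac{1}{2}\sum_i \lambda_i(q) + \tfrac{1}{4}\tr(Q_S(q))$.

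The main obstacle is the pointwise matching: Propositions \ref{prop:Melin} and \ref{prop:upest} are stated globally in terms of $\mu_{\min}$, and to read off $g_1$ at a single point one needs a localised version, obtained by combining the quasimode construction above with the pseudo-locality of the resolvent (Proposition \ref{prop:plr}). A cleaner alternative bypasses the Melin machinery entirely: one expands $\mathfrak{S}_N^{-1}T_N(h)\mathfrak{S}_N$ symbolically using Proposition \ref{prop:qmaps} and performs the explicit Toeplitz-to-Weyl conversion on the Bargmann side, reading off the $N^{-1}$ coefficient at $(q,0,0,0)$ directly from the differential operator $L_1$ and the trace correction.
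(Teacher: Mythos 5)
Your proposal is correct and follows essentially the same route as the paper: the paper likewise identifies $g_1(q,0,0,0)$ as the defect between the Toeplitz Melin value $\mu(\rho(q,0,0,0))=\sum_i\lambda_i(q)+\tfrac14\tr Q_S(q)$ read off from the normal form and the Weyl Melin infimum $\tfrac12\sum_i\lambda_i(q)$ of $g_0$, the two being compared through $\mathfrak{S}_N$ and $\mathcal{B}_N$ by means of a localised variational characterisation of $\mu(x)$ over states concentrated at scale $N^{-\frac12+\delta}$. The ``pointwise matching'' you flag as the main obstacle is precisely what the paper supplies by asserting that localised variational formula (a local form of Propositions \ref{prop:Melin} and \ref{prop:upest}), so your argument is the paper's with the quasimode made explicit.
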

\begin{proof}
  From the expression of $h\circ \rho$ in Proposition
  \ref{prop:coords}, one has
\[
\mu(\rho(q,0,0,0))=\sum_{i=1}^{n-r}\lambda_i(q)+\frac 14 \tr(Q_S(q)).
\]

If $h(x)=0$ and $\delta>0$ is small enough, the value $\mu(x)$ has the following variational characterisation:
\[
\mu(x)=\lim_{N\to +\infty}\left(N\inf\left(\int_M h|u|^2,u\in
    H_N(X),\int_{B(x,N^{-\frac 12+\delta})}|u|^2=1\right)\right)
\]

This variational problem can be read via the quantum map. If
\[
\int_{B(x,2N^{-\frac 12+\delta})}|u|^2=O(N^{-\infty}),
\]
then $\mathcal{B}_N^{-1}\mathfrak{S}_N^{-1}u$ microlocalises at speed
$N^{-\frac 12 + \delta}$ on $\rho^{-1}(x)$, and moreover,
\[
\int_M h|u|^2=\left\langle \mathcal{B}_N^{-1}\mathfrak{S}_N^{-1} u, Op^{N^{-1}}_W(g_0+N^{-1}g_1) \mathcal{B}_N^{-1}\mathfrak{S}_N^{-1}u
\right\rangle+O(N^{-2})\|u\|_2.
\] 

Now, if $x=\rho(q,0,0,0)$, the usual Melin estimate yields
\[
\lim_{N\to +\infty}\left(N\inf\left(\left\langle v
    Op_W^{N^{-1}}(g_0) v\right\rangle, \mathfrak{S}_N\mathcal{B}_N
    v\text{ as above}\right)\right)=\frac 12 \sum_{i=1}^{n-r}\lambda_i(q),
\]
hence, $g_1(\rho(q,0,0,0))$ contains all the defect between $\mu(\rho(q,0,0,0))$ and this estimate.
\end{proof}

\begin{rem}
  In general, the subprincipal symbol is not unique after application
  of a quantum map. Indeed, if $a$ is any smooth real-valued function on $M$ then
  $\exp(iT_N(a))$ is a unitary operator, and composing
  $\mathfrak{S}_N$ with this operator changes the subprincipal term.

Proposition \ref{prop:Melin-reg} shows that on the points where the principal symbol vanishes, the
subprincipal symbol is in fact rigid through any such transformations.
\end{rem}

Let us find a candidate for an approximate first eigenfunction:

\begin{prop}\label{prop:eigenf-reg}
  Suppose that the function $q\mapsto \mu\circ\rho(q,0,0,0)$ has a
  non-degenerate minimum at $q=0$. Let $\phi$ be the positive quadratic
  form such that $q\mapsto e^{-\phi(q)}$ is the ground state
  of the operator $$Q_S(0)(-i\nabla)+\frac 12
  \sum_{i,j=1}^rq_iq_j\cfrac{\partial^2}{\partial q_i\partial q_j}(\mu
  \circ \rho)(0,0,0,0),$$ with eigenvalue $\mu_2$.

  Then there exists a sequence of polynomials $(b_i)_{i\geq 1}$, and a
  sequence of real numbers $(\mu_i)_{i\geq 1}$, with
  \begin{align*}
    \mu_0&=\mu\circ \rho(q,0,0,0)\\
           \mu_1&=0
  \end{align*}and 
$\mu_2$
as previously, such
  that, for every $k$, $$f^k_N:(q,x)\mapsto N^{\frac n2-\frac r4}e^{-N\frac{x^2}{2}-\sqrt{N}\phi(q)}\left(1+\sum_{i=1}^{k}N^{-\frac
    i4}b_i(N^{\frac 14}q,N^{\frac 12}x)\right)$$ is an approximate
  eigenvector to $Op_W^{N^{-1}}(g)$, with
  eigenvalue $$\lambda^k_N=N^{-1}\sum_{i=0}^{k}N^{-\frac i4}\mu_i,$$ in the
  sense that, for every $K$ there exists $k$ such that
  $$\|Op_W^{N^{-1}}(g)f^k_N-\lambda^k_Nf^k_N\|_{L^2}=O(N^{-K}).$$
\end{prop}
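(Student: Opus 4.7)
The strategy is a multiscale Rayleigh--Schr\"odinger construction adapted to the natural scales $q \sim N^{-1/4}$ and $x \sim N^{-1/2}$ dictated by the announced ansatz. Introduce the unitary rescaling $U_N v(q,x) = N^{r/8 + (n-r)/4} v(N^{1/4}q, N^{1/2}x)$ and set
\[
\widehat H_N := N \cdot U_N^{-1} Op_W^{N^{-1}}(g)\, U_N.
\]
Taylor-expanding $g \sim g_0 + N^{-1} g_1 + N^{-2} g_2 + \ldots$ near the miniwell, using the normal form of Proposition~\ref{prop:coords} for $g_0$ and Proposition~\ref{prop:Melin-reg} for $g_1|_Z$, and applying the scaling rules $q \leadsto N^{-1/4}q$, $-i\partial_q \leadsto N^{1/4}(-i\partial_q)$, $x \leadsto N^{-1/2}x$, $-i\partial_x \leadsto N^{1/2}(-i\partial_x)$, one checks that $\widehat H_N \sim \sum_{k \geq 0} N^{-k/4} \widehat H_k$, where each $\widehat H_k$ is a polynomial-coefficient differential operator of $(q, -i\partial_q, x, -i\partial_x)$ whose polynomial degree is bounded in terms of $k$.

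A direct inspection identifies $\widehat H_0$ as the fast harmonic oscillator in $(x, -i\partial_x)$ plus a $q$-independent constant, whose ground state is $\psi_{\mathrm f}(x) = \pi^{-(n-r)/4} e^{-|x|^2/2}$ with eigenvalue $\mu_0 = \mu\circ\rho(0,0,0,0)$. The slow contributions $Q_S(q)(p)$ and the quadratic-in-$q$ part of $g_1$ each enter the rescaled operator with weight $N^{-1/2}$, i.e., at order $k=2$: a computation shows that $\widehat H_2$, projected onto $\psi_{\mathrm f}$ in the fast variables, acts on the slow factor as the harmonic oscillator $\widehat H_2^{\mathrm{slow}} := Q_S(0)(-i\nabla_q) + \tfrac 12 q^\top \Hess(\mu\circ\rho)(0)\, q$ of the statement, whose ground state is $e^{-\phi(q)}$ with eigenvalue $\mu_2$. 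The operator $\widehat H_1$ collects only the linear-in-$q$ Taylor contributions, which are odd in $q$; this parity will force $\mu_1 = 0$ at the solvability step.

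Insert the ansatz $\tilde f \sim \psi_0 (1 + \sum_{i \geq 1} N^{-i/4} b_i)$ with $\psi_0 = e^{-\phi(q)} \psi_{\mathrm f}(x)$, and formal eigenvalue $\tilde\mu \sim \sum_{i \geq 0} N^{-i/4} \mu_i$, into $(\widehat H_N - \tilde\mu)\tilde f = 0$, and match powers of $N^{-1/4}$. This yields the transport cascade
\[
(\widehat H_0 - \mu_0)(\psi_0 b_i) = \sum_{j=1}^{i} (\mu_j - \widehat H_j)(\psi_0 b_{i-j}), \qquad b_0 \equiv 1.
\]
The operator $\widehat H_0 - \mu_0$, a pure fast oscillator, has kernel $\mathrm{span}(\psi_{\mathrm f})$ in its natural Gaussian-weighted $L^2$, and its Fredholm inverse on the orthogonal complement preserves the polynomial-times-$\psi_{\mathrm f}$ structure by Hermite-basis diagonalisation. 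Projecting each cascade equation onto $\psi_{\mathrm f}$ in the fast variables reduces it to a Fredholm problem in $q$ alone, of the form $(\widehat H_2^{\mathrm{slow}} - \mu_2)\, c_i(q) = R_i(q)$; since $\widehat H_2^{\mathrm{slow}} - \mu_2$ has one-dimensional kernel $\mathbb{R}\cdot e^{-\phi(q)}$, the solvability condition $R_i \perp e^{-\phi}$ uniquely determines $\mu_i$, after which $c_i$ is uniquely determined modulo an inessential multiple of $e^{-\phi}$. The polynomial $b_i$ is then recovered by inverting the fast-direction Fredholm operator on the orthogonal complement.

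The main technical obstacle is to maintain the polynomial structure of each $b_i$ and control the degree (which grows linearly in $i$): this amounts to checking that the operators $\widehat H_j$ for $j \geq 1$ raise polynomial degree by at most a fixed constant (a consequence of the $O(|x|^3 + |\xi|^3 + |p|^3)$ tail bound in Proposition~\ref{prop:coords} combined with the scaling rules), and that the Fredholm inverses of both $\widehat H_0 - \mu_0$ and $\widehat H_2^{\mathrm{slow}} - \mu_2$ preserve polynomials, which is classical for tensor products of quantum harmonic oscillators. Granting these, the cascade closes; truncating at a sufficiently large order $k(K)$ yields a residue $O(N^{-K})$ in the rescaled picture, and pulling back by $U_N$ produces $f_N^k$ and $\lambda_N^k$ with the claimed approximation property.
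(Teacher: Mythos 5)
Your construction is essentially the paper's own proof: the same $N^{1/4}$/$N^{1/2}$ squeezing (the paper phrases it as rescaling the symbol, you as conjugating by the unitary $U_N$, which is the same operation), the same expansion of the conjugated operator in powers of $N^{-1/4}$ with the fast oscillator at order $0$ and the slow oscillator $Q_S(0)(-i\nabla_q)+\tfrac12\Hess(\mu\circ\rho)(0)$ emerging at order $2$ after projection onto the fast ground state, and the same two-tier solvability cascade preserving the polynomial-times-Gaussian structure. One small precision: at order $1$ the fast-variable solvability must hold \emph{pointwise} in $q$, which requires $\nabla(\mu\circ\rho)(0)=0$ (i.e.\ the minimum hypothesis, via Proposition \ref{prop:Melin-reg}), not merely the oddness of $\widehat H_1$ in $q$; this is exactly how the paper obtains $\mu_1=0$ together with the consistency of the $b_1$ equation.
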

This proposition provides an almost eigenfunction which we will show
to be associated to the lowest eigenvalue (see Proposition
\ref{prop:spgap-reg}). It is the main argument in the proof of Theorem
B; the concentration speed of this eigenfunction on zero, which is $N^{-\frac
  14}$, is the concentration speed of the lowest eigenvector of
$T_N(h)$ on the miniwell $P_0$, because of Proposition \ref{lem:qmaps-speed}.
\begin{proof}
  The proof proceeds by a squeezing of $Op_W^{N^{-1}}(g)$ by a
  factor $N^{\frac 14}$ along the $q$ variable.

  Let $$\tilde{g}_N=g(N^{-\frac 14}q,N^{-\frac 34}p,N^{-\frac
    12}x,N^{-\frac 12}\xi).$$

  Then $Op^{N^{-1}}_W(g_N)$ is conjugated with $Op^1_W(\tilde{g}_N)$
  through the unitary change of variables $u\mapsto N^{\frac n2-\frac r4}u(N^{-\frac 14}q,N^{-\frac 12}x).$

  Grouping terms in a Taylor expansion of $\tilde{g}_N$ yields
  $$\tilde{g}_N=N^{-1}\sum_{i=0}^{K}N^{-\frac
    i4}a_i(q,p,x,\xi)+O(N^{-\frac{K+5}{4}}),$$with first terms
\begin{align*}
  a_0=&g_1(0,0,0,0)+\sum_{i=1}^{n-r}\lambda_i(0)(x_i^2+\xi_i^2)\\
a_1=&q\cdot\nabla_q\left(g_1(\cdot,0,0,0)+\sum_{i=1}^{n-r}\lambda_i(\cdot)(x_i^2+\xi_i^2)\right)(0)\\
a_2=&Q_S(p)+\frac 12\Hess_q\left(g_1(\cdot,0,0,0)+\sum_{i=1}^{n-r}\lambda_i(\cdot)(x_i^2+\xi_i^2)\right)(0)(q)\\
      &+R_3(x,\xi)+L(x,\xi).
\end{align*}
Here $R_3$ is a homogeneous polynomial of degree $3$ and $L$ is a
linear form.

We further write $A_i=Op_W^{1}(a_i)$.

Recall $g_1(q,0,0,0)+\frac 12 \sum \lambda_i(q)=\mu\circ
\rho(q,0,0,0)$ and let $\phi$ be the positive quadratic form such that $e^{-\phi}$ is the ground
state (up to a positive factor) of $$Op_W^1\left(Q_S(p)+
\frac 12 \Hess(\mu\circ \rho)(0)(q)\right),$$ and
let $$u_0(q,x)=e^{-\frac{x^2}{2}-\phi(q)}.$$ We will provide a sequence
of almost eigenfunctions of $Op_W^{1}(\tilde{g}_N)$, of the form $$u_0(q,x)\left(1+\sum_{i=1}^{+\infty}N^{-\frac
    i4}b_i(q,x)\right),$$ with approximate
  eigenvalue $$N^{-1}\sum_{i=0}^{+\infty}N^{-\frac i4}\mu_i.$$ We
  proceed by perturbation of the dominant order $A_0$, which does not
  depend on $q$. 
  Our starting point is
  $$u_0=e^{-\frac {|x|^2}{2}-\phi(q)},\,\mu_0=\min \Sp(A_0)$$
  $$u_1=0,\,\mu_1=0.$$
  Indeed, one has $A_0u_0=\mu_0u_0$, and $A_1u_0=0$
  since $$\nabla\left(g_1(\cdot,0,0,0)+\frac 12 \sum_{i=1}^{n-r}\lambda_i(\cdot)\right)(0)=0,$$ so that $u_0$ is
  an approximate eigenvector for $Op^1_W(\tilde{g})$.

  Let us proceed by induction. Let $k\geq 1$ and suppose that we have
  already built $u_0,\ldots,u_k$ and $\mu_1,\ldots,\mu_k$ which solve
  the eigenvalue equation at order $k$; suppose further that there
  exists $C_{k+1}\in \R$ such that, for every $q\in \R^r$,

  $$\int_{\R^{n-r}}\overline{u_0}(x,q)\left(\sum_{i=1}^{k+1}[A_iu_{k+1-i}](q,x)-\sum_{i=1}^{k}\mu_iu_{k+1-i}(q,x)\right)\dd
  x = C_{k+1}|u_0(x,q)|^2.$$

  Then one can solve the equation
  $$(A_0-\mu_0)u_{k+1}+\cdots+(A_{k+1}-\mu_{k+1})u_0=0,$$
  up to a multiple of $e^{-\frac{|x|^2}{2}}$ in $u_{k+1}$. Indeed,
  if we write
  $$u_{k+1}=v(q)e^{-\frac{|x|^2}{2}}+w(x,q),$$ where for every $q\in
  \R^r$ one has $w(q,\cdot)\perp e^{-\frac{|\cdot|^2}{2}},$ the equation
  reduces to
  $$(A_0-\mu_0)w+\cdots+(A_{k+1}-\mu_{k+1})u_0=0.$$
  Freezing $q$ and taking the scalar product with $x\mapsto
  e^{-\frac{|x|^2}{2}}$ yields
  $$\lambda_{k+1}=C_{k+1}.$$

  Then, with $q$ still frozen one has $(A_0-\mu_0)w=r.h.s$ where the
  r.h.s is orthogonal to the ground state of $A_0$, which allow us to
  solve for $w$.

  If the r.h.s is $u_0$ times a polynomial in $(q,x)$, then the same
  holds for $w$ (in particular, for all $i$ one has $A_iw\in L^2$ so
  that it makes sense to proceed with the induction).

  It remains to choose $v$ so that $u_{k+1}$ satisfies the
  orthogonality constraint above, in order to be able to build the
  next terms.

  Since $\mu_1=0$ and $A_1u_0=0$, the terms $i=1$ vanish so that the
  first integral in which $u_{k+1}$ appears is not the next one but
  the one after it:
  $$\int_{\R^{n-r}}\overline{u_0}(x,q)\left(\sum_{i=2}^{k+3}\left[A_iu_{k+3-i}\right](q,x)-\sum_{i=2}^{k+2}\mu_iu_{k+3-i}(q,x)\right)\dd
  x.$$
  Hence, one wants to solve
  $$\int_{\R^{n-r}}e^{-\frac{|x|^2}{2}}[(A_2-\mu_2)ve^{-\frac{|\cdot|^2}{2}}](q,x)=F(q)+C_{k+3}e^{-\phi(q)},$$
  with \begin{multline*}F(q)=-\int_{\R^{n-r}}e^{-\frac{|x|^2}{2}}\left([(A_2-\mu_2)w](x,q)+\sum_{i=3}^{k+3}[A_iu_{k+3-i}](x,q)\right.\\-\left.\sum_{i=3}^{k+2}\mu_i
      u_{k+3-i}(x,q)\right)\dd x.\end{multline*}

    The symbol $a_2$ decomposes into a quadratic symbol in $(q,p)$,
    and an odd polynomial in $(x,\xi)$. The latter does not contribute
    to the integral in the left-hand-side, and the former commutes
    with multiplication by $e^{-\frac{|x|^2}{2}}$, so that
    $$\int_{\R^{n-r}}e^{-\frac{|x|^2}{2}}[(A_2-\mu_2)ve^{-\frac{|\cdot|^2}{2}}](q,x)=C_{n-r}\left(Q_S(iD)+\frac
      12\Hess(g_1+\frac
      12\sum_{i=1}^{n-r}\lambda_i)(q)-\mu_2\right)v.$$
    The equation on $v$ is then
    \[
      \left(Q_S(iD)+\frac 12\Hess(\mu\circ
        \rho)(0)(q)-\mu_2\right)v=C_{n-r}^{-1}\left(F(q)+C_{k+3}e^{-\phi(q)}\right).
        \]

    With $$C_{k+1}=-\langle e^{-\phi(q)},F(q)\rangle,$$ one
    has $$F-C_{k+1}e^{-\phi}\perp e^{-\phi},$$ so that one can solve
    for $v$.

    Again, if $u_0,\ldots,u_{k}$ and $w$ are $u_0$ times a polynomial
    function in $(x,q)$, then $F$ is $e^{-\phi}$ times a
    polynomial function, so that the same is true for $v$. This
    concludes the construction by induction.

    The estimation of the error terms stems directly from the fact
    that the terms $u_k$ are polynomials time a function with Gaussian
    decay. Hence, this formal construction yields approximate eigenfunctions.
  \end{proof}

 Before we show that the almost eigenfunction computed in Proposition
 \ref{prop:eigenf-reg} corresponds indeed to the lowest eigenvalue,
 let us use the quantum maps $\mathfrak{S}_N$ to obtain upper and lower
 bounds for $T_N(h)$, which will be useful in Section 7.

\begin{prop}\label{prop:Weylreg}
  For $t>0$ let $A^{reg}_N$ the following operator on $L^2(\R^r)$:

\[
A^{reg}_N=Op_W^{N^{-1}}\left(|p|^2+N^{-1}|q|^2\right)
\]

Under the conditions of Proposition \ref{prop:eigenf-reg}, there
exists $a_0>0$, and two constants $0<c<C$ such that, for any $N$,  for
any $a<a_0$, for
any normalized $u\in L^2(X)$ supported in
  $B(P_0,a)\times \S^1$, with $v=\mathcal{B}_N^{-1}\mathfrak{S}_N^{-1}u$, one has:
\begin{multline*}
c\langle
  v,A^{reg}_Nv\rangle
  -C\langle v,Op^{N^{-1}}_W(|N^{-\frac 12},x,\xi|^3)v\rangle
  +c\left(\langle v,Op_W^{N^{-1}}(|x|^2+|\xi|^2)v\rangle-N^{-1}\frac{n-r}{2}\right)
  \\\leq \langle u,hu\rangle-N^{-1}\mu(P_0)
\end{multline*}

In addition, the following bound holds:
\begin{multline*}
c\langle v,A^{reg}_Nv\rangle+\langle v,Op_W^{N^{-1}}(Q_F(0)(x,\xi))v\rangle -C\langle v,Op^{N^{-1}}_W(|N^{-\frac
  12},x,\xi|^3)v\rangle\\ -aC\left(\langle
  v,Op_W^{N^{-1}}(|x|^2+|\xi|^2)v\rangle -N^{-1}\frac{n-r}{2}\right)\\\leq \langle
u,hu\rangle-N^{-1}\mu(P_0)+\frac{N^{-1}}{2}\sum_i\lambda_i(0)\\
\leq C\langle v,A^{reg}_Nv\rangle+\langle v,Op_W^{N^{-1}}(Q_F(0)(x,\xi))v\rangle +C\langle v,Op^{N^{-1}}_W(|N^{-\frac
  12},x,\xi|^3)v\rangle\\ +aC\left(\langle
  v,Op_W^{N^{-1}}(|x|^2+|\xi|^2)v\rangle -N^{-1}\frac{n-r}{2}\right)
\end{multline*}

Here, $O(|x,\xi,N^{-\frac 12}|^3)$ stands for $O(|x,\xi|^3+N^{-\frac 32})$.
\end{prop}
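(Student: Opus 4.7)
The plan is to transport the problem to the Weyl calculus via the quantum map from Proposition \ref{prop:qmaps} composed with the Bargmann transform, so that $\langle u, h u\rangle = \langle v, Op_W^{N^{-1}}(g) v\rangle + O(N^{-\infty})$ with $v = \mathcal{B}_N^{-1}\mathfrak{S}_N^{-1}u$, supported within $O(a)$ of the origin in the coordinates $(q,p,x,\xi)$ of Proposition \ref{prop:coords}. The symbol expands as $g = g_0 + N^{-1}g_1 + O(N^{-2})$, where
\[
g_0 = Q_S(q)(p) + \sum_{i=1}^{n-r}\lambda_i(q)(x_i^2+\xi_i^2) + O(|p,x,\xi|^3)
\]
by the normal form, and $g_1(q,0,0,0) = \tfrac{1}{2}\sum\lambda_i(q) + \tfrac{1}{4}\tr Q_S(q)$ by Proposition \ref{prop:Melin-reg}. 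Taylor-expanding $g_1$ to first order in $(p,x,\xi)$ and lumping together the cubic remainder of $g_0$, the $O(N^{-1}|p,x,\xi|)$ remainder of $g_1$, and the $O(N^{-2})$ tail of $g$ produces the stated $Op_W^{N^{-1}}(|N^{-1/2},x,\xi|^3)$ error, with the cubic $|p|^3$ contribution absorbed via $|p|\leq a$ into the $|p|^2$ part of $A^{reg}_N$.

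The central step is the rearrangement
\[
g_0 + N^{-1}g_1 - N^{-1}\mu(P_0) = Q_S(q)(p) + \Bigl[\sum\lambda_i(q)(x_i^2+\xi_i^2) - \tfrac{N^{-1}}{2}\sum\lambda_i(q)\Bigr] + N^{-1}\bigl[\mu(\rho(q,0,0,0))-\mu(P_0)\bigr] + \mathrm{err},
\]
using $\mu(\rho(q,0,0,0)) = \sum\lambda_i(q) + \tfrac{1}{4}\tr Q_S(q)$. After Weyl quantization each bracket is non-negative on functions supported in $|q|\leq a$: the first because $Q_S$ is positive definite, the second because for every fixed $q$ the operator $Op_W^{N^{-1}}(\sum\lambda_i(q)(x_i^2+\xi_i^2))$ is a harmonic oscillator in $(x,\xi)$ with ground-state energy exactly $\tfrac{N^{-1}}{2}\sum\lambda_i(q)$, and the third because $q=0$ is a non-degenerate minimum of $\mu\circ\rho(\cdot,0,0,0)$. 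Using $\lambda_i(q)\geq c>0$ gives the second bracket $\geq c\bigl(Op_W^{N^{-1}}(|x|^2+|\xi|^2)-\tfrac{N^{-1}(n-r)}{2}\bigr)$, while the first and third combine to $\geq c'A^{reg}_N$; this yields the one-sided inequality.

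For the sharper two-sided bound I keep $Q_F(0)(x,\xi)=\sum\lambda_i(0)(x_i^2+\xi_i^2)$ explicit by splitting $\sum\lambda_i(q)(x_i^2+\xi_i^2)=Q_F(0)(x,\xi)+\sum(\lambda_i(q)-\lambda_i(0))(x_i^2+\xi_i^2)$, with the second summand bounded fibrewise by $Ca(|x|^2+|\xi|^2)$; this produces exactly the $-aC(\text{excess fast})$ slack. Similarly $Q_S(q)-Q_S(0)=O(a)$ is absorbed into $A^{reg}_N$ by adjusting the constant. The additive shift $\tfrac{N^{-1}}{2}\sum\lambda_i(0)$ on the right-hand side is the ground-state energy of the fast oscillator at $q=0$, which is what remains after subtracting $N^{-1}\mu(P_0)$ from $\langle v, Op_W^{N^{-1}}(Q_F(0)(x,\xi))v\rangle + N^{-1}\tfrac{1}{4}\tr Q_S(0)$. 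The residual scalar $-\tfrac{N^{-1}}{2}\sum(\lambda_i(q)-\lambda_i(0))$ of size $aN^{-1}$ is controlled by $CaN^{-1}\|v\|^2$ and absorbed into the $aC(\text{excess fast})$ slack upon noting that $\tfrac{N^{-1}(n-r)}{2}$ appears on both sides and is itself of size $N^{-1}$. The upper bound is obtained by the symmetric inequalities.

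The main obstacle is bookkeeping the $q$-dependent perturbations of the fast oscillator and of $Q_S$ so that every remainder lands in one of the tolerated error bars. The saving grace is the direct-integral structure in $q$: the key identity $Op_W^{N^{-1}}(\sum\lambda_i(q)(x_i^2+\xi_i^2))\geq\tfrac{N^{-1}}{2}\sum\lambda_i(q)$ then holds pointwise in $q$ with the optimal, $q$-dependent constant, which is precisely what makes the subprincipal correction $-\tfrac{1}{2}\sum\lambda_i(q)$ dictated by Proposition \ref{prop:Melin-reg} cancel the fast-mode zero-point energy cleanly and leave the slow dynamics encoded in $A^{reg}_N$ as the principal contribution at order $N^{-3/2}$.
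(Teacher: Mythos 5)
Your overall route---transport by the quantum map and the Bargmann transform, the normal form of Proposition \ref{prop:coords}, the identity $g_1=\mu\circ\rho(q,0,0,0)-\tfrac12\sum_i\lambda_i(q)+O(|p,x,\xi|)$ from Proposition \ref{prop:Melin-reg}, and the pointwise-in-$q$ zero-point identity for the fast oscillator---is the paper's route, and your first (one-sided) lower bound is essentially the argument given there.

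There is, however, a genuine gap in the two-sided estimate, at exactly the point where the paper has to work hardest. You control the fast-mode perturbation by the symbol inequality $|Q_F(q)(x,\xi)-Q_F(0)(x,\xi)|\leq Ca(|x|^2+|\xi|^2)$ and assert that this ``produces exactly the $-aC(\text{excess})$ slack''. It does not: after quantization it produces slack $aC\langle v,Op_W^{N^{-1}}(|x|^2+|\xi|^2)v\rangle$, which exceeds the allowed quantity $aC\bigl(\langle v,Op_W^{N^{-1}}(|x|^2+|\xi|^2)v\rangle-N^{-1}\tfrac{n-r}{2}\bigr)$ by a scalar of size $aN^{-1}$. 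Together with the residual $-\tfrac{N^{-1}}{2}\sum_i(\lambda_i(q)-\lambda_i(0))$ coming from $g_1$, which you acknowledge, you are left with scalar errors of order $aN^{-1}$ that cannot be absorbed into any of the tolerated terms: the excess term vanishes when $v$ is a ground state of the fast oscillator in $(x,\xi)$, while $\langle v,A^{reg}_Nv\rangle$ and the $N^{-3/2}$ hidden in the cubic error are both $O(N^{-3/2})\ll aN^{-1}$. Noting that ``$N^{-1}\tfrac{n-r}{2}$ appears on both sides'' does not rescue this. The loss is not harmless either, since in Section 7 the proposition is applied in spectral windows of width $N^{-1}\Lambda_N$ with $\Lambda_N$ possibly as small as $N^{-1/2+\epsilon}$. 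The paper's proof replaces your inequality by an exact identity: since $Q_F$ has been diagonalised, $Op_W^{N^{-1}}(Q_F(q)-Q_F(0))$ is diagonal in the Hermite basis in $(x,\xi)$, and splits into an ``excess'' part $\sum_\nu\langle\alpha_\nu(q),\nu\cdot(\lambda(q)-\lambda(0))\alpha_\nu(q)\rangle$, which is genuinely bounded by $Ca$ times the excess, \emph{plus} a zero-point part $\tfrac{N^{-1}}{2}\langle v,\sum_i(\lambda_i(q)-\lambda_i(0))v\rangle$; this second term cancels \emph{exactly} against the $-\tfrac{N^{-1}}{2}\sum_i\lambda_i(q)$ contributed by $g_1$ once the constant $\tfrac{N^{-1}}{2}\sum_i\lambda_i(0)$ is moved to the middle member of the chain of inequalities. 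You need this exact cancellation; neither $O(aN^{-1})$ scalar can be handled separately.
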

\begin{proof}
  Let us prove the first lower bound. As
  \[
    g_0(q,p,x,\xi)=Q_F(q)(x,\xi)+Q_S(q)(p)+O(|p,x,\xi|^3),
  \]
  one has first, by a lower bound on $Op_W^{N^{-1}}(Q_F(q)(x,\xi))$,
  \begin{multline*}
    \langle v,Op_W^{N^{-1}}(g_0)v\rangle \\\geq c\langle
    v,Op_W^{N^{-1}}(|p|^2)v\rangle+\frac{N^{-1}}{2}\langle v,\sum_i
    \lambda_i(q),v\rangle+\langle v,Op_W^{N^{-1}}(|x,\xi,N^{-\frac 12}|^3)v\rangle.
    \end{multline*}
    Let us make this bound more precise. Since $\lambda_i(0)>0\, \forall
    i$, for $q$ small enough one has
\[Op_W^{N^{-1}}(Q_F(q)(x,\xi))-\frac{N^{-1}}{2}\sum_i\langle v,\lambda_i(q)v\rangle\geq cOp_W^{N^{-1}}(|x|^2+|\xi|^2)-\frac{N^{-1}}{2}c(n-r).\]

Hence
     \begin{multline*}
    \langle v,Op_W^{N^{-1}}(g_0)v\rangle \geq c\langle
    v,Op_W^{N^{-1}}(|p|^2)v\rangle+\frac{N^{-1}}{2}\langle v,\sum_i
    \lambda_i(q),v\rangle\\+c\langle
    v,Op_W^{N^{-1}}(|x|^2+|\xi|^2)\rangle-c\frac{N^{-1}}{2}(n-r)-C\langle
    v,Op_W^{N^{-1}}(|x,\xi,N^{-\frac 12}|^3)v\rangle.
  \end{multline*}

  Recall from Proposition \ref{prop:Melin-reg} that
  $g_1=\mu(\rho(q,0,0,0))-\frac 12 \sum_i\lambda_i(q)+O(|x,p,\xi|).$

  Hence,
  \begin{multline*}
  \langle v,Op_W^{N^{-1}}(g)v\rangle \geq c\langle
    v,Op_W^{N^{-1}}(|p|^2)v\rangle+\frac{N^{-1}}{2}\langle
    v,\mu(\rho(q,0,0,0)),v\rangle\\+c\langle v,Op_W^{N^{-1}}(|x|^2+|\xi|^2)-c\frac{N^{-1}}{2}(n-r)-C\langle
    v,Op_W^{N^{-1}}(|N^{-\frac 12},x,\xi|^3)v\rangle.
  \end{multline*}

  As $\mu(\rho(q,0,0,0))\geq \mu(P_0)+c|q|^2$, this yields the lower
  bound.

We now turn to the second estimate. This requires a bound on
\[\langle v,Op_W^{N^{-1}}(Q_F(q)(x,\xi)-Q_F(0)(x,\xi))v\rangle.\]
Since $Q_F$ has been diagonalised this operator is diagonal in the
Hilbert base given by the Hermite functions. Let us write
\[v=\sum_{\nu\in \N^{n-r}}\alpha_{\nu}(q)H_{\nu,N}(x),\]
Where $(H_{\nu,N})_{\nu}$ denote the Hilbert base of $\R^{n-r}$ given
by the Hermite functions. Then
\begin{multline*}
\langle v,Op_W^{N^{-1}}(Q_F(q)(x,\xi)-Q_F(0)(x,\xi))v\rangle\\
=\sum_{\nu}\langle \alpha_{\nu}(q),\nu\cdot
(\lambda(q)-\lambda(0))\alpha_{\nu}(q)\rangle+\frac 12 \langle v,\sum_{i}\lambda_i(q)-\lambda_i(0)v\rangle.
\end{multline*}
The key point is
\[\left|\sum_{\nu}\langle \alpha_{\nu}(q),\nu\cdot
  (\lambda(q)-\lambda(0))\alpha_{\nu}(q)\rangle\right| \leq
  Ca\sum_{\nu}\langle \alpha_{\nu}(q),|\nu| \alpha_{\nu}(q)\rangle.\]

The right-hand term is then equal to
\[Ca\left(\langle v,Op_W^{N^{-1}}(|x|^2+|\xi|^2)
    v\rangle-N^{-1}\frac{n-r}{2}\right).\]
This yields the desired control since
\[g_1=\mu(q)-\frac 12 \sum_i\lambda_{i}(q)+O(|p,x,\xi|).\]
\end{proof}
% To study the Weyl law of the full symbol, we need to control the
% growth of successive terms in the symbol expansion:
% \begin{prop}\label{prop:order-reg}
%   Let $n\geq 2$. There exists $C>0$ such that, for every $(q,p)\in
%   \R^{2r}$, one has
%   \begin{align*}|a_n(q,p,0,0)|\leq C_n(1+|q|^2+|p|^2)^{\frac n2}.\\
%     \|\partial^2_{(x,\xi)}a_n(q,p,0,0)\| \leq C_n(1+|q|^2+|p|^2)^{\frac
%     n2}.
%   \end{align*}
% \end{prop}
% \begin{proof}
%   Recall that the symbol obtained by the quantum map has an expansion
%   as a classical symbol: $$g_N\sim \sum_{i=0}^{+\infty}N^{-i}g_i.$$
%   If a term of $g_i$ of the form $q^{\alpha}p^{\beta}$ belongs to
%   $a_n$, then $$|\alpha|+3|\beta|+4i=n+4.$$ If $|\beta|\geq 2$ then
%   one can deduce $$|\alpha|+|\beta| \leq n.$$ If $|\beta|\leq 1$ then
%   $i\geq 1$, since the principal symbol is non-negative and vanishes along $Z$, so
%   that $$|\alpha|+|\beta|\leq n.$$

%   Hence, $(q,p)\mapsto a_n(q,p,0,0)$ is a polynomial function of total
%   degree bounded by $n$, hence the claim.

%   In a similar manner, if a term of $g_i$ of the form
%   $q^{\alpha}p^{\beta}(x,\xi)_k(x,\xi)_l$ belongs to $a_n$
%   then $$|\alpha|+3|\beta|+4i+4=n+4$$ so that $$|\alpha|+|\beta|\leq
%   n.$$ Again the terms of interest form a polynomial in $(q,p)$ of
%   degree bounded by $n$, hence the claim.
% \end{proof}
\subsection{Spectral gap}

It only remains to show that the sequence of almost eigenfunctions
given by Proposition \ref{prop:eigenf-reg} corresponds to the first
eigenvalue of $T_N(h)$.

\begin{prop}\label{prop:spgap-reg}
 Let $h\geq 0$ be such that the minimum of the Melin value $\mu$ is only reached at
  one point, which is a miniwell for $h$.
  
  Let $(\mu_i)$ be the real sequence constructed in the previous
  proposition, and let $\lambda_{\min}$ be the first eigenvalue of
  $T_N(h)$.

  Then $$\lambda_{\min}\sim N^{-1}\sum_{i=0}^{\infty}N^{-\frac
  i4}\mu_i.$$ Moreover, there exists $c>0$ such that,
for every $N$, one has $$\dist(\lambda_{\min},Sp(T_N(h))\setminus
\{\lambda_{\min}\})\geq c N^{-\frac 32}.$$
\end{prop}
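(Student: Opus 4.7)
The plan is to combine the quasi-modes of Proposition \ref{prop:eigenf-reg} with the quadratic form lower bound of Proposition \ref{prop:Weylreg} via a min-max argument. The quasi-modes give the upper bound on $\lambda_{\min}$ together with the full asymptotic expansion; the lower bound, together with the explicit spectral gap of a model operator on $L^2(\R^n)$, yields the matching lower estimate and the $N^{-\frac 32}$ gap.

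For the upper bound, I transport the quasi-mode back to $H_N(X)$: set $u_N^k = \mathfrak{S}_N \mathcal{B}_N f_N^k$. By Proposition \ref{prop:qmaps} and the microlocalisation of $f_N^k$ at scales $(N^{-\frac 14},N^{-\frac 12})$ in $(q,x)$, one has $\|u_N^k\|_{H_N} = 1 + O(N^{-\infty})$ and $\|(T_N(h) - \lambda_N^k) u_N^k\|_{L^2} = O(N^{-K})$ for arbitrary $K$, upon choosing $k=k(K)$ large. Since $T_N(h)$ is self-adjoint on a finite-dimensional space, this implies $\dist(\lambda_N^k, \Sp(T_N(h))) = O(N^{-K})$, and in particular $\lambda_{\min} \leq \lambda_N^k + O(N^{-K})$, so $\lambda_{\min} \sim N^{-1}\sum_i N^{-\frac i4}\mu_i$ as an upper bound.

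For the matching lower bound, let $u_N$ be any unit eigenfunction of $T_N(h)$ with eigenvalue $\lambda_N \leq N^{-1}(\mu(P_0)+\varepsilon_0)$ for $\varepsilon_0$ small. By Theorem A, $u_N$ localises on $P_0$, so $v_N := \mathcal{B}_N^{-1}\mathfrak{S}_N^{-1}(\chi u_N)$ is unit modulo $O(N^{-\infty})$ for a smooth cutoff $\chi$ supported on $B(P_0,a)$. Proposition \ref{prop:Weylreg} then gives
\[
\lambda_N - N^{-1}\mu(P_0) \geq c\langle v_N, A_N^{reg} v_N\rangle + c\bigl\langle v_N, \bigl(Op_W^{N^{-1}}(|x|^2+|\xi|^2)-\tfrac{n-r}{2}N^{-1}\bigr) v_N\bigr\rangle - R_N,
\]
where $R_N$ denotes the cubic remainder. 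The model operator $A_N^{reg} + Op_W^{N^{-1}}(|x|^2+|\xi|^2)$ is a tensor sum of harmonic oscillators: its first factor has eigenvalues $N^{-\frac 32}(2|\mathbf{n}|+r)$, the second has eigenvalues $N^{-1}(2|\mathbf{m}|+n-r)$. Hence its ground state is simple, and the spectral gap is of order $N^{-\frac 32}$ (the slow modes of $A_N^{reg}$ are the narrowest).

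The gap then follows from min-max: let $\lambda^{(1)}\leq \lambda^{(2)}$ be the two smallest eigenvalues of $T_N(h)$, with eigenfunctions $u_N^{(1)},u_N^{(2)}$, and let $v_N^{(1)},v_N^{(2)}$ be their images under $\mathcal{B}_N^{-1}\mathfrak{S}_N^{-1}$. The upper bound analysis forces $v_N^{(1)}$ to coincide with the model ground state modulo $O(N^{-K})$; hence $v_N^{(2)}$ is almost orthogonal to this ground state, and the gap of the model yields $\langle v_N^{(2)}, (A_N^{reg} + H_{\mathrm{fast}} - \text{g.s.}) v_N^{(2)}\rangle \geq cN^{-\frac 32}$, whence $\lambda^{(2)}-\lambda^{(1)}\geq cN^{-\frac 32}$. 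The main technical obstacle is to control $R_N$ strictly better than $O(N^{-\frac 32})$, which a priori ties with the gap; this is handled by shrinking the cutoff radius $a$ so that the constant in front of the cubic remainder in Proposition \ref{prop:Weylreg} is much smaller than $c$, and by bootstrapping the concentration of Proposition \ref{prop:preloc} to confinement at scale $N^{-\frac 14}$ in the slow variables, which makes $R_N = O(N^{-\frac 32-\eta})$ for some $\eta>0$ and closes the argument.
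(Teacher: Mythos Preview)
Your upper bound via the transported quasi-modes is fine and matches the paper. The problem is in the lower bound and the gap argument.

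The remainder $R_N$ coming from Proposition \ref{prop:Weylreg} is genuinely of order $N^{-\frac 32}$, the same scale as the gap you are trying to extract, and your claim that it can be pushed to $O(N^{-\frac 32-\eta})$ is not correct. That remainder contains two pieces: a constant $CN^{-\frac 32}$ (this is the meaning of the notation $|N^{-\frac 12},x,\xi|^3$ in the Proposition, coming from sub-subprincipal terms and the $O(|p,x,\xi|)$ error in $g_1$), and $C\langle v,Op_W^{N^{-1}}(|x,\xi|^3)v\rangle$. For any state whose fast variables $(x,\xi)$ are concentrated at the harmonic-oscillator scale $N^{-\frac 12}$, the second piece is $\sim N^{-\frac 32}$ as well. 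Shrinking the cutoff radius $a$ does not touch the constant $C$ in front of the cubic term (only the separate $aC$ term in the second estimate depends on $a$), and bootstrapping confinement in the slow variable $q$ to scale $N^{-\frac 14}$ does nothing for the $(x,\xi)$ concentration, which cannot improve beyond $N^{-\frac 12}$. Since Proposition \ref{prop:Weylreg} gives only inequalities, these $O(N^{-\frac 32})$ errors do not cancel when you subtract the estimates for $v_N^{(1)}$ and $v_N^{(2)}$, and the constant $C$ has no a priori relation to the gap constant $c$ of your model operator. The min-max comparison therefore does not close.

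The paper sidesteps this by working directly with the squeezed operator $Op_W^1(\tilde{g}_N)=N^{-1}\sum_i N^{-\frac i4}A_i$ already introduced in Proposition \ref{prop:eigenf-reg}, and running a two-step projection rather than a single tensor-model comparison. First, the $O(1)$ gap of $A_0$ in the fast variables forces any state with $\langle v_N,Op_W^1(\tilde{g}_N)v_N\rangle\leq N^{-1}\mu_0+CN^{-\frac 32}$ to be of the form $e^{-|x|^2/2}w_N(q)+O(N^{-\frac 12})$. Second, on the reduced factor $w_N$ one uses the $O(1)$ gap of the slow quadratic operator $A_2=Op_W^1\big(Q_S(0)(p)+\tfrac 12\Hess(\mu\circ\rho)(0)(q)\big)$: if $C-\mu_2$ is below that gap, then $\langle w_N,e^{-\phi}\rangle$ is bounded below. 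In the squeezed expansion the next corrections are $O(N^{-\frac 74})$, strictly smaller than the $N^{-\frac 32}$ gap, so the argument closes cleanly. The moral is that Proposition \ref{prop:Weylreg} is calibrated for the Weyl counting of Section 7, where $O(N^{-\frac 32})$ slack is harmless; for the spectral gap you need the finer hierarchy of the squeezed expansion.
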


\begin{preuve}
  Let us show that any function orthogonal to the one proposed in
  Proposition \ref{prop:eigenf-reg} has an energy which is larger by at least
  $cN^{-\frac 32}$.

Let $(v_N)$ be a sequence of unit vectors in $L^2(\R^n)$. If $$\langle
v_N,Op^{1}_W(\tilde{g}_N)v_N\rangle \leq N^{-1}\mu_0+CN^{-\frac
  32}$$ for some $C$, then $v_N=e^{-\frac{|x|^2}{2}}w_N(q)+O(N^{-\frac 12})$, with
$\|w_N\|_{L^2}=1+O(N^{-\frac 12})$.

If $C-\mu_2$ is strictly smaller than the spectral gap of the quadratic
operator $$Op_W^1\left(Q_S(0)(p)+\frac 12\Hess \mu\circ \rho(\cdot,0,0,0)(q)\right),$$ then
$\langle w_N,e^{-\phi(q)}\rangle\geq a$ for some $a>0$ independent of $N$,
which concludes the proof.
\end{preuve}

%%% Local Variables:
%%% mode: latex
%%% TeX-master: "Article"
%%% End:

\section{A degenerate case}
In this section we treat a case in which the zero set of
the symbol is not a submanifold. The
local hypotheses on the symbol are as follows:

\begin{defn}\label{defn:simple-crossing}
  Let $h\in C^{\infty}(M,\R^+)$ and $P_0\in M$. The zero set of $h$ is
  said to have a \emph{simple crossing} at $P_0$ if there is an open
  set $U$ containing $P_0$ such that:
\begin{itemize}
\item $\{h=0\}\cap U=Z_1\cup Z_2$, where $Z_1$ and $Z_2$ are two
  pieces of smooth isotropic submanifolds of $M$.
\item $Z_1\cap Z_2=\{P_0\}$ and $T_{P_0}Z_1\cap T_{P_0}Z_2=\{0\}$.
\item $T_{P_0}Z_1\oplus T_{P_0}Z_2$ is isotropic.
\item For $i=1,2$, on all of $Z_i\setminus \{P_0\}$, $h$ vanishes at order exactly
  $2$ on $Z_i$.
\item There is $c>0$ such that, for all $x\in Z_1\cup Z_2$, one has:
\[
\mu(x)-\mu(P_0)\geq c \dist(P_0,x).
\]
\end{itemize}

\end{defn}
The last condition may seem very strong. However, $\mu$ is typically
only Lipschitz-continuous at the intersection. A typical example is
\[
h(q_1,q_2,p_1,p_2)=p_1^2+p_2^2+q_1^2q_2^2,
\]
where along
$\{q_1,0,0,0\}$ one has $\mu(q_1)=|q_1|+1.$
We exclude on purpose situations like
$\mu(q_1)=1+|q_1|-q_1+q_1^2$, which grows like $|q_1|$ for $q_1<0$ but
grows like $q_1^2$ for $q_1>0$.

Under the hypotheses of Definition \ref{defn:simple-crossing}, we first give a symplectic normal form of $h$
near $P_0$, then a description of the first eigenvector and eigenvalue
of $T_N(h)$.

\subsection{Symplectic normal form}
Let $Q\geq 0$ be a semidefinite positive quadratic form on $(\R^{2n},\omega)$, and
$(e_i,f_i)$ a symplectic basis of $\R^{2n}$ which diagonalises $Q$:
\[
Q\left(\sum_{i=1}^n
  q_ie_i+p_if_i\right)=\sum_{i=r+1}^{r'}p_i^2+\sum_{i=r'+1}^n\lambda_i(q_i^2+p_i^2),
\]
\[
\forall i, \lambda_i\neq 0.
\]
Let $M$ denote the matrix of $Q$ in the canonical basis. Then
\[
\{\pm i\lambda_{r'+1},\ldots,\pm i\lambda_{n}\}=
\sigma(JM)\setminus \{0\}.
\]
More precisely, if $E_{\lambda}$ denotes the (complex)
eigenspace of $JM$ with eigenvalue $\lambda$, then
\[
E_{i\lambda_j}\oplus
E_{-i\lambda_j}=\Span_{\C}((e_k,f_k),k>r',\lambda_k=\lambda_j).
\]
Moreover, Jordan blocks never occur for nonzero eigenvalues. Hence,
\begin{prop}
  If $Q:\R^m\mapsto S^+_{2n}(\R)$ is a smooth parameter-dependent
  semipositive quadratic form on $(\R^{2n},\omega)$, and if the $d$-th
  largest symplectic eigenvalue $\lambda_{n-d+1}$ (with multiplicity)
  never crosses the $d+1$-th largest symplectic eigenvalue, then the
  $2d$ last vectors $(e_{n-d+1},f_{n-d+1},\ldots,e_n,f_n)$ of a
  symplectic basis diagonalising $Q$ depend smoothly on the parameter,
  up to reordering of the $d$ largest symplectic eigenvalues.
\end{prop}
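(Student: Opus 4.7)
The plan is to combine two ingredients: (a) smooth dependence of the spectral subspace associated to the $d$ largest symplectic eigenvalues, obtained via contour integration, and (b) the non-degenerate case of Proposition~\ref{prop:quadred} applied to the restriction of $Q$ to this subspace.

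First I would observe that the $2d$ real vectors $(e_{n-d+1},f_{n-d+1},\ldots,e_n,f_n)$ span a symplectic subspace $W\subset \R^{2n}$ whose complexification equals $\bigoplus_{j>n-d}(E_{i\lambda_j}\oplus E_{-i\lambda_j})$, by the identification of eigenspaces of $JM$ recalled just before the statement. The non-crossing hypothesis gives, locally in the parameter $t$, a uniform gap separating $\{\pm i\lambda_j(t),\,j>n-d\}$ from the remainder of $\Sp(JM(t))$ (which includes $0$). Pick a contour $\Gamma\subset \C$ encircling exactly these $2d$ eigenvalues and symmetric under complex conjugation; the spectral projector
\[
P(t)=\frac{1}{2\pi i}\oint_{\Gamma}(zI-JM(t))^{-1}\,\dd z
\]
then depends smoothly on $t$, is real, and its image $W(t)$ is a smooth family of $2d$-dimensional real subspaces of $\R^{2n}$.

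Next I would verify that each $W(t)$ is symplectic and that the restriction of $Q$ to it is positive definite. Symplecticity comes from the fact that the spectral subspaces for the zero eigenvalue (the kernel of $Q$) and for the symplectic eigenvalues $\lambda_{r'+1},\ldots,\lambda_{n-d}$ are $\omega$-orthogonal complements of $W(t)$; definiteness is automatic since $W(t)$ contains none of the zero modes. At this point the problem has been reduced to smoothly diagonalising a smooth family of positive definite quadratic forms on a smooth family of $2d$-dimensional symplectic spaces.

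I would then invoke the $r=0$ case of Proposition~\ref{prop:quadred}, which produces the symplectic basis as $AU^{\top}M^{1/2}J$, where $U$ is a smooth family of orthogonal matrices diagonalising the antisymmetric matrix $M^{1/2}JM^{1/2}$ and $D=AA^{\top}$ its block of eigenvalues. The only delicate point is that the Rellich-type smoothness of $(U,D)$ fails exactly at crossings among the $\lambda_j(t)$, $j>n-d$, and at such crossings one must permute eigenvalues to preserve smoothness of the joint eigendata. This is precisely the reordering clause in the statement, and I expect this to be the main technical obstacle: one argues locally on a contractible neighbourhood of the parameter, choosing near each crossing a smooth path of orthonormal bases of the relevant (possibly higher-dimensional) real spectral plane of $M^{1/2}JM^{1/2}$, and reindexing the $\lambda_j$'s accordingly so that the resulting vectors $(e_j,f_j)_{j>n-d}$ vary smoothly. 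Assembling these local frames gives the desired smooth symplectic basis up to reordering.
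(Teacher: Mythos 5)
Your argument is correct and is essentially the paper's own: the proposition is presented there as a direct consequence (``Hence'') of the preceding identification of $E_{i\lambda_j}\oplus E_{-i\lambda_j}$ with $\Span(e_k,f_k)$ and the absence of Jordan blocks for nonzero eigenvalues, i.e.\ precisely the smooth dependence of the spectral subspace of $JM(t)$ isolated by the gap, which you make explicit via the Riesz projector before reducing to the non-degenerate ($r=0$) case of Proposition~\ref{prop:quadred}. Your write-up is in fact more careful than the paper's, notably in flagging that crossings among the top $d$ symplectic eigenvalues (the ``reordering'' clause) are the only genuinely delicate point of the reduction.
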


One can build a symplectic normal form as previously, under
the conditions above.

\begin{prop}\label{prop:cross-normal-form}
  Let $h$ satisfy the simple crossing conditions of Definition
  \ref{defn:simple-crossing}, and let
  \begin{align*}
    r_1&=\dim(Z_1)\\
    r_2&=\dim(Z_2).
  \end{align*}
  Then there is
  an open set $V \subset U$, containing $P_0$, and a symplectic map
\[
\sigma:V\mapsto\R^{2r_1}\times \R^{2r_2}\times \R^{2(n-r_1-r_2)}
\]
  such that
\begin{multline*}
h \circ
\sigma^{-1}(q_1,p_1,q_2,p_2,x,\xi)\\
=\sum_{i=1}^{n-r_1-r_2}\lambda_i({ q_1},{ q_2})(x_i^2+\xi_i^2)+
Q_S(q_1,q_2)({ p_1},{ p_2})\\
+\sum_{i,j=1}^{r_1}\sum_{k,l=1}^{r_2}\alpha_{ijkl}q_{1,i}q_{1,j}q_{2,k}q_{2,l}\\
+
O(\|{ x},{ \xi},{ p_1},{ p_2}\|^3)+O(\|{ q_1}\|^2\|{ q_2}\|^2(\|{ q_1}\|+\|{ q_2}\|)).
\end{multline*}
Moreover, for every $(q_1,q_2)\in (\R^{r_1}\setminus
\{0\})\times(\R^{r_2}\setminus \{0\})$ small enough, the matrices given by
$\left[\sum_{i,j}\alpha_{ijkl}q_{1,i}q_{1,j}\right]_{k,l}$ and
$\left[\sum_{k,l}\alpha_{ijkl}q_{2,k}q_{2,l}\right]_{i,j}$ are positive.
\end{prop}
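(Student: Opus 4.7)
My proof would follow the two-step strategy of Proposition \ref{prop:coords}: straighten the zero set, then symplectically normalise the transverse quadratic part. Since $T_{P_0}Z_1\oplus T_{P_0}Z_2$ is isotropic of dimension $r_1+r_2$, I would first pick a symplectic basis of $T_{P_0}M$ adapted to this decomposition, with $(e_i)_{i\leq r_1}$ spanning $T_{P_0}Z_1$, $(e_i)_{r_1<i\leq r_1+r_2}$ spanning $T_{P_0}Z_2$, $(f_i)_{i\leq r_1+r_2}$ forming an isotropic symplectic dual, and the remaining $(e_i,f_i)$ a symplectic complement. To promote this linear model to an actual symplectomorphism sending $Z_1\mapsto\R^{r_1}_{q_1}$ and $Z_2\mapsto\R^{r_2}_{q_2}$, I would first straighten $Z_1$ via the standard Moser argument of Proposition \ref{prop:moser}, then straighten the image of $Z_2$ with the time-one flow of a Hamiltonian vanishing to order two on $\R^{r_1}_{q_1}$ (so that the flow fixes $\R^{r_1}_{q_1}$ pointwise). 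That such a Hamiltonian can be chosen so that its flow sends $Z_2$ to the linear model is a relative Darboux-type statement relying crucially on isotropy of $T_{P_0}(Z_1\cup Z_2)$.

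Next, Taylor expand $h$ in the transverse variables $w=(p_1,p_2,x,\xi)$ with smooth coefficients in $(q_1,q_2)$:
\[
h=B(q_1,q_2)+L(q_1,q_2)\cdot w+A(q_1,q_2)(w)+O(\|w\|^3).
\]
Non-negativity of $h$ together with vanishing on $Z_1\cup Z_2$ forces both $B$ and $L$ to vanish whenever $q_1=0$ or $q_2=0$. Two applications of Hadamard's lemma identify $B=\sum\alpha_{ijkl}q_{1,i}q_{1,j}q_{2,k}q_{2,l}+O(\|q_1\|^2\|q_2\|^2(\|q_1\|+\|q_2\|))$; similarly $L=O(\|q_1\|\|q_2\|)$. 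The quadratic $A(q_1,q_2)$ is positive definite on the transverse space: at $(0,0)$ because the Hessian of $h$ at $P_0$ is non-degenerate transverse to its kernel, and for nearby $(q_1,q_2)$ by continuity.

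It remains to eliminate $L\cdot w$ and to symplectically separate $A(q_1,q_2)(w)$ into $\sum\lambda_i(q_1,q_2)(x_i^2+\xi_i^2)+Q_S(q_1,q_2)(p_1,p_2)$. The linear term is absorbed by a near-identity symplectomorphism (time-one flow of a Hamiltonian solving an appropriate homological equation), which modifies $B$ only by a quantity of bidegree $(2,2)$ in $(q_1,q_2)$, to be incorporated into the $\alpha_{ijkl}$. For the quadratic part, I would extend $A(q_1,q_2)$ to a smooth family of semi-positive forms on $\R^{2n}$ by declaring it to vanish on the $(q_1,q_2)$-directions; the extended family has constant isotropic kernel $\R^{r_1+r_2}_{q_1,q_2}$, so Proposition \ref{prop:quadred} produces a smoothly varying symplectic basis splitting slow$+$momentum from fast modes. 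The two-step argument of Proposition \ref{prop:coords}---first building a symplectic submanifold $S$ of dimension $2(r_1+r_2)$ containing $Z_1\cup Z_2$ and tangent to the slow$+$momentum directions and straightening it, then diagonalising the fast part via a fibre-wise symplectic action on $(x,\xi)$ compensated by a shift of $(p_1,p_2)$---transfers essentially verbatim. Finally, positivity of $[\sum_{i,j}\alpha_{ijkl}q_{1,i}q_{1,j}]_{k,l}$ for small $q_1\neq 0$ follows from the hypothesis that $h$ vanishes on $Z_1\setminus\{P_0\}$ to order exactly two: at $(q_1,0,0,0,0,0)$ the $q_2$-block of the transverse Hessian equals $2[\sum_{i,j}\alpha_{ijkl}q_{1,i}q_{1,j}]_{k,l}+O(\|q_1\|^3)$, which must be positive definite; the symmetric claim follows by exchanging $q_1$ and $q_2$.

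The main obstacle is the simultaneous straightening of both isotropic submanifolds in the first step: the stratified structure of $Z_1\cup Z_2$ makes the standard relative Darboux/Moser argument for a single smooth submanifold inadequate, and one must argue by first straightening $Z_1$ and then $Z_2$ using a Hamiltonian vanishing to sufficient order on $\R^{r_1}_{q_1}$ so as not to perturb it.
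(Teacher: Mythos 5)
Your proposal reaches the stated normal form and is correct in outline, but it reorganises the argument in a way that genuinely differs from the paper at the one point where the crossing matters. The paper first builds the fast-mode subbundle \emph{intrinsically} along $Z_1\cup Z_2$ (symplectic eigenvectors of the true Hessian of $h$), and its main technical work is to show that this bundle extends smoothly across the crossing point $P_0$ and across all of $\hat Z$: this is done via a Grassmannian distance estimate at $P_0$ and then a variational characterisation $\kappa(F)=\dist\bigl(0,\Sp(J_FM_F)\bigr)$ whose unique non-degenerate maximiser $\mathcal{D}(z)$ is needed precisely because $\Hess(h)$ is not semi-positive on $\hat Z$ away from $Z_1\cup Z_2$; only afterwards does it straighten $Z_1$, $Z_2$ and $S$ simultaneously. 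You straighten first and then feed the $ww$-block $A(q_1,q_2)$ of the Taylor expansion, extended by zero on the $q$-directions, into Proposition \ref{prop:quadred}; since that block is automatically smooth on all of $\hat Z$ and positive definite near $0$ by continuity, you sidestep the extension-across-the-crossing argument entirely. This is a legitimate and arguably more elementary route; what it gives up is that your splitting is attached to a coordinate block rather than to the intrinsic symplectic spectral decomposition of $\Hess(h)$ along $Z_1\cup Z_2$, so you must check (as you implicitly do via the elimination of $L$) that the two agree on the zero set, which is what Proposition \ref{prop:Melin-cross} later relies on. Your identification of the quartic part by two applications of Hadamard's lemma and your derivation of the positivity of the $\alpha$-matrices from the order-exactly-two vanishing are the same as the paper's.

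The step you should expand is the removal of $L(q)\cdot w$. Hadamard only gives $L=O(\|q_1\|\,\|q_2\|)$, and a monomial $q_{1,i}q_{2,k}w_j$ is \emph{not} covered by $O(\|w\|^3)+O(\|q_1\|^2\|q_2\|^2(\|q_1\|+\|q_2\|))$ (test $q_1=q_2=t$, $\|w\|=\epsilon t$ with $\epsilon$ small and fixed). The naive fix, translating $w$ to the fibrewise minimiser $w_*(q)=O(\|q_1\|\,\|q_2\|)$, is not symplectic, and the section $\{(q,w_*(q))\}$ need not be isotropic, so it cannot simply be straightened onto $\hat Z$. Your homological-equation flow is the right tool, but a single pass only raises the bidegree of $L$ in $(q_1,q_2)$ by one, and an elementary AM--GM splitting of the mixed monomials shows that the remainder is absorbed by the stated error terms only once its bidegree is at least $(2,2)$ in $(q_1,q_2)$; so the reduction must be iterated, using at each step that the remaining linear term still vanishes to second order on $Z_1\cup Z_2$ and that $A>0$ makes the homological equation solvable. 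In fairness, the paper's own proof is silent on this term, so on this point you are more explicit than the source rather than less.
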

\begin{proof}
  At $P_0$, there are exactly $n-r_1-r_2$ nonzero symplectic
  eigenvalues (with multiplicity) for the Hessian of $h$. Hence, in small
  neighbourhoods $V_1$ and $V_2$ of $P_0$ in $Z_1$ and $Z_2$, there is no crossing between the
  $r_1+r_2$-th largest eigenvalue and the one immediatly below. From
  the previous lemma there is a smooth choice of symplectic
  eigenvectors for the $r_1+r_2$ largest symplectic eigenvalues, which
  span a symplectic subbundle of $T_{Z_1\cup Z_2}M$. The symplectic
  orthogonal $\mathcal{F}$ of this bundle contains $TZ_1\cup TZ_2$ since the latter
  consists of zero vectors for the quadratic form. $\mathcal{F}$ forms a distribution, which is integrable along $Z_1\cup
  Z_2\setminus P_0$ as in Proposition \ref{prop:coords}. Let us
  show that $\mathcal{F}$ is also integrable at $P_0$.

  We take a first set of local coordinates $(q_1,q_2,...)$ such that
  $Z_1=\{q_1,0,0\}$ and $Z_2=\{0,q_2,0\}$. 

  Then, the restriction of the Hessian of $h$ at $(q_1,0,0)$ to
  $\{0,q_2,0\}$ is $O(|q_1|^2)$ as $q_1$ approaches zero. In
  particular, the distance in the Grassmannian from $\{0,q_2,0\}$ to
  the distribution above at $(q_1,0,0)$ is $O(|q_1|^2)$.
  Hence, the first differential of $\mathcal{F}$ at $P_0$
  in the directions contained in $Z_1$ and $Z_2$ are zero, so that the
  distribution is integrable at $P_0$.

  Hence, there is
  a symplectic manifold $S$, containing $Z_1\cup Z_2$, so that
  $T_{Z_1\cup Z_2}S$ is the symplectic orthogonal of the bundle above.

  As in the proof of Proposition \ref{prop:coords}, one first considers
  a symplectic map which
  \begin{itemize}
  \item sends $Z_1$ to $\R^{r_1}\times \{0,0,0,0,0\},$
  \item sends $Z_2$ to $\{0,0\}\times \R^{r_2}\times \{0,0,0\},$
\item sends $S$ to $\R^{2r_1}\times \R^{2r_2}\times \{0,0\}.$
\end{itemize}
Let us define
\[\hat{Z}=\R^{r_1}\times \{0\}\times \R^{r_2}\times\{0,0,0\}.\]

Then $\hat{Z}$ is isotropic. Along $\hat{Z}$ one wishes to find the
$n-r_1-r_2$ largest symplectic eigenpairs of the Hessian matrix of
$h$. However, this Hessian is not necessarily semi-positive definite
on $\hat{Z}$. To tackle this issue, we first formulate the span
$\mathcal{D}$ of what will be the largest symplectic eigenvectors by
a variatonal formulation.

We first observe that there is a symplectic subspace of
$\R^{2n}$, of dimension $2(n-r_1-r_2)$, on which the restriction of
$\Hess(h)(0)$ is definite positive.

By continuity, for $z\in \hat{Z}$ small enough, the set of
subspaces of $\R^{2n}$ of dimension $2(n-r_1-r_2)$, on which the
restriction of $\Hess(h)(z)$ is semi-definite positive, is non-empty. Let us call
$G^+(z)$ this closed subset of the Grassmannian. For each element
$F\in G^+(z)$, we pick a basis of $F$ under which we denote by $M_{F}$
the matrix elements of $\Hess(h)(z)$ and $J_F$ the matrix elements of
the symplectic form $\omega$. Since $M_F$ has a square root among
semidefinite positive matrices, the matrix $J_FM_F$ has purely
imaginary spectrum (since it is conjugated with the antisymmetric
matrix $M_F^{\frac
  12}J_FM_F^{\frac 12}$). We then let $\kappa(F)=\dist(0,Sp(J_FM_F))$.

We claim that maximising $\kappa(F)$ on $G^+(z)$ leads to
$\mathcal{D}(z)$.
We first note that $\kappa(F)=0$ on the boundary of $G^+(z)$ (on which
the matrix $M_F$ is singular), and moreover $\kappa(F)=0$ if $F$
is not a symplectic subspace of $R^{2n}$.

For $z=0$, the function $\kappa$ has only one maximum which is
positive and non-degenerate. Hence the same holds for $z$ small since
$\kappa$ at $z$ is $C^2$-close to $\kappa$ at zero. The unique
maximal point $\mathcal{D}(z)$ depends smoothly on $z$ and is
transverse to $T_z\hat{Z}$. From the expression of the differential of
$\kappa$, the symplectic orthogonal of
$\mathcal{D}(z)$ is also its $\Hess(h)(z)$-orthogonal.

The Hessian matrix of $h$ at $z$, when restricted on $\mathcal{D}(z)$, is definite
positive, hence has symplectic eigenpairs which depend smoothly on $z$.
Hence, as in Proposition \ref{prop:coords} one can find a symplectic map
which is identity on $S$ and which diagonalises the fast modes along $\hat{Z}$.

Then it only remains to study the behaviour of $h\circ \sigma^{-1}$
on $\hat{Z}$, near $0$.
As $h$ is non-negative and vanishes exactly on $Z_1\cup Z_2$, one has
\[
h\circ \sigma^{-1}(q_1,0,q_2,0,0,0)=O(\|q_1\|^2\|q_2\|^2).
\]
The
dominant term is then of the form:
\[
\sum_{i,j,k,l}\alpha_{ijkl}q_{1,i}q_{1,j}q_{2,k}q_{2,l}.
\]

The positivity conditions on the tensor $\alpha$ are then directly given by the fact that $h$
vanishes at order $2$ on $Z_1\setminus \{P_0\}$ and $Z_2\setminus \{P_0\}$.
\end{proof}

  One can easily adapt Definition \ref{defn:simple-crossing} to the
   case of a crossing along a submanifold.

 \begin{defn}[Crossing along a submanifold]\label{defn:cross-subm}
     Let $h\in C^{\infty}(M,\R^+)$ and $P_0\in M$. The zero set of $h$ is
   said to \emph{cross along a submanifold} near $P_0$ if there is an open
   set $U$ containing $P_0$ such that:
 \begin{itemize}
 \item $\{h=0\}\cap U=Z_1\cup Z_2$, where $Z_1$ and $Z_2$ are two
   pieces of smooth isotropic submanifolds of $M$.
 \item $Z_1\cap Z_2=Z_3$ is a piece of smooth submanifold containing
   $P_0$. For each $x\in Z_3$, one has $T_xZ_3=T_xZ_1\cap T_xZ_2$.
 \item For each $x\in Z_3$, the space $T_{x}Z_1+ T_{x}Z_2$ is isotropic.
 \item For $i=1,2$, on all of $Z_i\setminus Z_3$, $h$ vanishes at order exactly
   $2$ on $Z_i$.
 \item There is $c>0$ such that, for all $x\in Z_1\cup Z_2$, one has:
 \[
 \mu(x)-\mu(P_0)\geq c \dist(Z_3,x).
 \]
 \end{itemize}
 \end{defn}
 With this definition one can find a normal form as previously:

   \begin{prop}
   Let $h$ satisfy the conditions of Definition \ref{defn:cross-subm},
   and let
   \begin{align*}
     r_1&=\dim(Z_1)-\dim(Z_3)\\
     r_2&=\dim(Z_2)-\dim(Z_3)\\
     r_3&=\dim(Z_3).
   \end{align*}
   Then there is
   an open set $V \subset U$, containing $P_0$, and a symplectic map
 \[
 \sigma:V\mapsto\R^{2r_1}\times \R^{2r_2}\times \R^{2r_3}\times\R^{2(n-r_1-r_2-r_3)}
 \]
   such that
 \begin{multline*}
 h \circ
 \sigma^{-1}(q_1,p_1,q_2,p_2,q_3,p_3,x,\xi)\\
 =\sum_{i=1}^{n-r_1-r_2}\lambda_i({ q_1},{ q_2},q_3)(x_i^2+\xi_i^2)+
 Q_a(q_1,q_2,q_3)({ p_1},{ p_2})+Q_b(q_1,q_2,q_3)(p_3)\\
 +\sum_{i,j=1}^{r_1}\sum_{k,l=1}^{r_2}\alpha_{ijkl}(q_3)q_{1,i}q_{1,j}q_{2,k}q_{2,l}\\
 +
 O(\|{ x},{ \xi},{ p_1},{ p_2}\|^3)+O(\|{ q_1}\|^2\|{ q_2}\|^2\cdot\|( q_1, q_2)\|).
 \end{multline*}
 Moreover, for every $q_3\in \R^{r_3}$ small enough, for every $(q_1,q_2)\in (\R^{r_1}\setminus
 \{0\})\times(\R^{r_2}\setminus \{0\})$ small enough, the matrices given by
 $\left[\sum_{i,j}\alpha_{ijkl}q_{1,i}q_{1,j}\right]_{k,l}$ and
 $\left[\sum_{k,l}\alpha_{ijkl}q_{2,k}q_{2,l}\right]_{i,j}$ are positive.
 \end{prop}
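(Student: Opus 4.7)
The plan is to adapt the proof of Proposition \ref{prop:cross-normal-form} by treating the submanifold $Z_3$ as a smooth parameter space along which every construction depends smoothly. First, at each point of $Z_3$ the Hessian of $h$ has exactly $n - r_1 - r_2$ nonzero symplectic eigenpairs (counted with multiplicity), which by the smoothness lemma preceding Proposition \ref{prop:cross-normal-form} span a symplectic subbundle depending smoothly on the base point in $Z_3$. Its symplectic orthogonal $\mathcal{F}$ contains $TZ_1 \cup TZ_2$ since the latter consists of vectors in the kernel of the Hessian; integrability of $\mathcal F$ is checked along $Z_3$ exactly as at the single point $P_0$ in Proposition \ref{prop:cross-normal-form}, giving a symplectic submanifold $S$ of dimension $2(r_1 + r_2 + r_3)$ containing $Z_1 \cup Z_2$ near $P_0$.

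Second, iterated applications of Moser's principle (Proposition \ref{prop:moser}) produce a symplectic chart in which $S$ corresponds to the linear subspace $\{(x,\xi) = 0\}$, the submanifold $Z_3$ to $\{q_1 = p_1 = q_2 = p_2 = p_3 = 0\}$ (parametrized by $q_3$), and $Z_1$, $Z_2$ respectively to $\{q_2 = p_1 = p_2 = p_3 = 0\}$ and $\{q_1 = p_1 = p_2 = p_3 = 0\}$.

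Third, one extends the fast-mode decomposition. The Hessian of $h$ fails to be semi-positive off $Z_3$ within $\hat Z = \{(q_1, 0, q_2, 0, q_3, 0, 0, 0)\}$, so one applies the variational characterization used in Proposition \ref{prop:cross-normal-form}: for $z \in \hat Z$ close to $P_0$, maximise $\kappa(F) = \dist(0, \Sp(J_F M_F))$ over subspaces $F$ of dimension $2(n - r_1 - r_2)$ on which $\Hess(h)(z)$ is semi-definite positive. At $P_0$ the maximum is unique and non-degenerate, so the smooth maximiser $\mathcal D(z)$ persists for $z$ near $P_0$ and is symplectic-orthogonal to $T_z \hat Z$. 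Integrating $\mathcal D$ and diagonalising the restriction of $\Hess(h)$ to it by a smooth $(q_1, q_2, q_3)$-dependent symplectic-eigenvector construction (Proposition \ref{prop:quadred}) yields the term $\sum \lambda_i(q_1, q_2, q_3)(x_i^2 + \xi_i^2)$.

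Finally, on $\hat Z$ the function $h \circ \sigma^{-1}$ vanishes on $\{q_1 = 0\} \cup \{q_2 = 0\}$, so Hadamard's lemma gives $h \circ \sigma^{-1}|_{\hat Z} = O(\|q_1\|^2 \|q_2\|^2)$ with leading $4$-tensor $\sum \alpha_{ijkl}(q_3) q_{1,i} q_{1,j} q_{2,k} q_{2,l}$; the required positivity statements on the two partially contracted matrices follow from $h$ vanishing transversely at order exactly $2$ on $Z_1 \setminus Z_3$ and $Z_2 \setminus Z_3$. The main obstacle is the block-diagonal splitting $Q_S = Q_a(q_1,q_2,q_3)(p_1, p_2) + Q_b(q_1,q_2,q_3)(p_3)$ of the slow-mode quadratic form: this requires a further symplectic correction, analogous to the map $f$ constructed at the end of the proof of Proposition \ref{prop:coords}, killing the mixed $(p_1, p_2)$--$p_3$ terms of $Q_S$ while preserving the already-achieved normalisations. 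Here one uses that $TZ_3$ is distinguished inside the isotropic sum $T_{P_0}Z_1 + T_{P_0}Z_2$, and that the splitting need only hold modulo the error terms already present in the statement.
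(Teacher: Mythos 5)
Your proposal is correct and follows essentially the same route as the paper: rerun the simple-crossing normal form of Proposition \ref{prop:cross-normal-form} with $Z_3$ as a smooth parameter space, and then perform one further symplectic reduction to decouple $p_3$ from $(p_1,p_2)$. The paper implements that last decoupling by applying Proposition \ref{prop:quadred} with parameter space $Z_3$ to obtain a $q_3$-dependent symplectic splitting $S\oplus S'$ with $TZ_3\subset S$, orthogonal for $Q_S(0,0,q_3)$, followed by a re-flattening of $\{h=0\}$ up to an $O(|p_3|^2)$ correction --- the same device as your ``$f$-type'' correction, and at a comparable level of detail.
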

 \begin{proof}
   One can repeat the proof of Proposition \ref{prop:cross-normal-form}
   with $\hat{Z}$ containing $Z_1\cup Z_2$. This yields the desired normal
   form, except for $Q_a$ and $Q_b$ which are replaced with a more
   general quadratic function $Q_S(q_1,q_2,q_3)(p_1,p_2,p_3)$.

   In order to separate $p_3$ from $(p_1,p_2)$, we first apply Lemma
   \ref{prop:quadred} with $Z_3$ as parameter space, in order to find, for every $q_3$, a
   decomposition of $\R^{2(r_1+r_2+r_3)}$ into a sum of symplectic
   spaces $S\oplus S'$ with $TZ_3\subset S$, and so that $S$ and $S'$
   are orthogonal for $Q_S(q_1=0,q_2=0,q_3).$
   There is a symplectomorphism $\sigma$ sending $S$ into $\R^{2r_1}$
   such that $\sigma|_{Z_3}=Id$.

   The map $\sigma$ may distort $\{h=0\}$, which can be flattened
   again on each space $\R^{2r_1}\times \R^{2r_2}\times
   \{q_3,0\}\times \R^{2(n-r_1-r_2-r_3)}$, with smooth dependence on
   $q_3$.

   Again, this leads to a symplectomorphism on a neighbourhood of zero
   in $\R^{2n}$, up to a small correction controlled by $O(|p_3|^2)$.
 \end{proof}
 \begin{rem}[More general degenerate crossings]
  Simple crossings (and crossings along submanifolds) are not stable
  by Cartesian products, which leads to a slightly more general
  situation (see Remark \ref{rem:crossprod}).

  On the other hand, one could try to deal with symbols whose zero set
  form a \emph{stratified manifold}, which are defined recursively: a
  stratified manifold is a union of smooth manifolds with clean
  intersections, such that the union of all intersections is itself a
  stratified manifold. The boundary of a hypercube is an instance of a
  stratified manifold.

  In this respect, a model case for a stratified situation of degree
  three is
  \[
    p_1^2+p_2^2+p_3^2 + q_1^2q_2^2q_3^2,
  \]
  with zero set $\{p_1=0,p_2=0,p_3=0,q_i=0\}$ for every $i=1,2,3$.
  
  For this operator, the ground state is rapidly decreasing at
  infinity \cite{helffer_decroissance_1992} but this is not due to
  subprincipal effects. Indeed, in this setting, $\mu$ is constant
  along the three axes. If we add a generic transverse quadratic
  operator $Q_q(x,\xi)$, the subprincipal effect will dominate and
  has no reason to select the point $\{q=0\}$, as opposed to the
  simple crossing case where an open set of symbols sharing the same
  minimal set have minimal Melin value at the crossing point.
 \end{rem}

\subsection{Study of the model operator}
\label{subsec:study-model-operator}
As Proposition \ref{prop:cross-normal-form} suggests, the following
operators play an important role in the study of the crossing case:
\[
P=Q(iD)+\sum_{i,j=1}^{r_1}\sum_{k,l=1}^{r_2}\alpha_{ijkl}q_{1,i}q_{1,j}q_{2,k}q_{2,l}+\sum_{i=1}^{r_1}L_{1,i}
q_{1,i}+\sum_{i=1}^{r_2}L_{2,i}q_{2,i},
\]
acting on
$L^2(\R^{r_1+r_2})$, where $D$ is the differentiation operator and
$Q>0$ is a quadratic form.
The linear form $L$ will appear as an effect of the subprincipal
symbol, as we will see later.

Let $Q_1$ and $Q_2$ denote the restrictions of the quadratic form $Q$
on $\R^{r_1}\times \{0\}$ and $\{0\}\times \R^{r_2}$, respectively.
Throughout this Subsection we impose the following conditions on $P$:
\begin{itemize}
\item For every $(q_1,q_2)$, one has
\[
\sum_{i,j=1}^{r_1}\sum_{i,j=1}^{r_2}\alpha_{ijkl}q_{1,i}q_{1,j}q_{2,k}q_{2,l}\geq 0.
\]
\item For every $q_1\neq 0$, one has
\[
Q_2(iD)+\sum_{ijkl}q_{1,i}q_{1,j}q_{2,k}q_{2,l}>
  \sum_{i=1}^{r_1}L_{1,i}q_{1,i},
\]
\item For every $q_2\neq 0$, one has
\[
Q_1(iD)+\sum_{ijkl}q_{1,i}q_{1,j}q_{2,k}q_{2,l}>
  \sum_{i=1}^{r_1}L_{2,i}q_{2,i},
\]
\end{itemize}

\begin{rem}\label{rem:crossprod}
  These conditions are weaker than what Definition
\ref{defn:simple-crossing} calls for. There does not need to be a simple
crossing in this case as the following example illustrates:
\[
P=-\Delta+q_{1,1}^2q_{2,1}^2+q_{1,2}^2q_{2,2}^2.
\]

There, the zero set of the symbol is a union of four isotropic
surfaces in $\R^8$, i.e. $\{p=0, q_{1,i}=0,q_{2,j}=0\}$ for all
$(i,j)\in \{1,2\}^2$.
\end{rem}
\begin{prop}
  Under the previous conditions, there exists $c>0$ such that
\[
P\geq c(Q(iD) + |q|).
\]
\end{prop}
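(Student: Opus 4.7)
The plan is to promote the two scalar strict hypotheses to absolute-value operator inequalities by scaling and compactness, and then to combine them by a doubling decomposition of $P$ which exploits the evenness of the quartic term $V(q_1,q_2) := \sum_{i,j,k,l}\alpha_{ijkl}q_{1,i}q_{1,j}q_{2,k}q_{2,l}$ in each of $q_1, q_2$ separately.

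For the upgrade step, I fix $q_1 \neq 0$ and regard $A(q_1) := Q_2(iD_{q_2}) + V$ as a positive self-adjoint operator on $L^2(\R^{r_2})$. Under the unitary rescaling $q_2 \mapsto |q_1|^{-1/2}q_2$, $A(q_1)$ is conjugate to $|q_1|\, A(\hat q_1)$, where $\hat q_1 = q_1/|q_1|$, so its ground state energy factorises as $\lambda(q_1) = |q_1|\,\nu(\hat q_1)$ with $\nu$ continuous on the sphere $S^{r_1-1}$. The strict hypothesis reads $\nu(\hat q_1) > L_1 \cdot \hat q_1$, so compactness of the sphere gives a uniform gap $\nu(\hat q_1) \geq L_1 \cdot \hat q_1 + 2\epsilon$ for some $\epsilon>0$. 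Since $V$ is even in $q_1$ we have $\nu(-\hat q_1) = \nu(\hat q_1)$, and applying the bound at $\pm\hat q_1$ gives $\nu(\hat q_1) \geq |L_1 \cdot \hat q_1| + 2\epsilon$. Multiplying by $|q_1|$ and integrating over $q_1$ yields the operator inequality
\[
Q_2(iD_{q_2}) + V \;\geq\; |L_1 \cdot q_1| + 2\epsilon\,|q_1|
\]
on $L^2(\R^{r_1+r_2})$, together with its symmetric analogue obtained by swapping $q_1$ and $q_2$.

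Adding the two upgraded inequalities and using $|q_1|+|q_2|\geq |q|$ and $|L_1 q_1|+|L_2 q_2|\geq |L\cdot q|$, I get $(Q_1+Q_2)(iD) + 2V \geq |L\cdot q| + 2\epsilon|q|$. I then decompose
\[
2P \;=\; \bigl[(Q_1+Q_2)(iD) + 2V + L\cdot q\bigr] + \bigl[(2Q-Q_1-Q_2)(iD) + L\cdot q\bigr].
\]
The first bracket is $\geq |L\cdot q| + L\cdot q + 2\epsilon|q| \geq 2\epsilon|q|$ by the elementary identity $|a|+a\geq 0$ applied to each $L_i q_i$. For the second bracket, positive definiteness of $Q$ gives a constant $c'>0$ with $Q \geq c'(Q_1+Q_2)$ as quadratic forms; after, if necessary, a preliminary linear change of coordinates on $p$ that diagonalises $Q$ into block form (a change which preserves the positivity structure of $V$), one has $(2Q-Q_1-Q_2)(iD) \geq \delta\, Q(iD)$ for some $\delta>0$. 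To finally extract the claimed lower bound, I would absorb the residual signed linear term and patch things together by a coercivity argument: the operator $Q(iD) + |q|$ has compact resolvent (elliptic kinetic part plus confining potential), so any minimising sequence for the quotient $\langle u, Pu\rangle / \langle u, (Q(iD)+|q|)u\rangle$ is pre-compact in $L^2$, with a limit $u^\ast$ of unit norm on which $\langle u^\ast, Pu^\ast\rangle > 0$ thanks to the upgraded inequalities. This gives $c>0$.

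The main obstacle is the coefficient book-keeping: $V$ appears only once in $P$ but is needed with full weight in each of the upgraded inequalities, and any direct splitting of $V$ between the two brackets degrades the lower bound by the scaling factor $\sqrt{ab}<1$. The doubling $2P = \ldots$ together with the absolute-value strengthening of the hypotheses, itself a consequence of $V$ being even in each $q_i$ separately, is what makes the signed linear terms $L_i q_i$ cancel at the operator level and allows the decomposition to close cleanly.
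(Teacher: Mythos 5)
Your first step --- upgrading the two strict hypotheses to the quantitative bounds $Q_2(iD)+V\geq |L_1\cdot q_1|+2\epsilon|q_1|$ and its analogue, via the unitary rescaling $q_2\mapsto |q_1|^{-1/2}q_2$, $1$-homogeneity of the fibrewise ground-state energy, compactness of the sphere, and evenness of $V$ in each block --- is correct and is essentially the paper's first step made explicit (the paper phrases the conclusion as $Q_2(iD)+V\geq (1-c)L_1\cdot q_1+2c|q_1|$, but the mechanism is the same). The combination step, however, does not close. First, the inequality $(2Q-Q_1-Q_2)(iD)\geq \delta\,Q(iD)$ is false in general: $2Q-(Q_1+Q_2)$ is $Q$ plus its off-diagonal part, and this need not be positive semidefinite when the cross terms of $Q$ are large (try $Q(p_1,p_2)=p_1^2+p_2^2+1.8\,p_1p_2$, which is positive definite while $2Q-Q_1-Q_2$ has a negative eigenvalue). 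Your proposed repair --- a linear change of coordinates in $p$ alone that block-diagonalises $Q$ --- is not available: a block-diagonal change of the $p$-variables cannot remove the off-diagonal block of $Q$, and a change mixing $p_1$ with $p_2$ must be accompanied by the contragredient change mixing $q_1$ with $q_2$, which destroys the product structure of $V$ and the form of the hypotheses. Second, and more fundamentally, even granting that inequality your second bracket $(2Q-Q_1-Q_2)(iD)+L\cdot q$ is unbounded below: a nonnegative kinetic operator plus an unconfined linear potential is an Airy-type operator with spectrum unbounded from below. All of $2V$ was spent in the first bracket (where it is genuinely needed at full weight, as you note), so nothing remains to confine the bare linear term in the second bracket. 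The closing ``coercivity argument'' does not repair this: precompactness of a minimising sequence for the quotient $\langle u,Pu\rangle/\langle u,(Q(iD)+|q|)u\rangle$ is asserted rather than proved, a weak limit could vanish, and positivity of $\langle u^*,Pu^*\rangle$ on the limit is essentially what is to be shown.

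For comparison, the paper's combination never isolates a linear term from the operator that dominates it: it records the consequence of each hypothesis in the form $Q_2(iD)+V\geq (1-c)L_1\cdot q_1+2c|q_1|$, so that when the $+L_1\cdot q_1$ present in $P$ is restored the linear contributions are absorbed by the $|q_1|$ margin together with the positivity of the remaining operator, extracts a fraction $c$ of $Q(iD)$ at the same time, and only then averages the two resulting lower bounds on $P$ to produce $2P\geq 2cQ(iD)+2c|q_1|+2c|q_2|$. If you want to salvage your route, the fix is to keep each $L_i\cdot q_i$ inside the bracket containing the copy of $V$ and the kinetic form that control it (using $|a|+a\geq 0$ there, as you do), and to make sure the leftover kinetic term carries no potential at all; the delicate point you must then address honestly is the comparison between $Q(iD)$ and $Q_1(iD)+Q_2(iD)$ when $Q$ has cross terms, which your current write-up assumes away.
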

\begin{proof}
  Let $Q_2$ be the restriction of the quadratic form $Q$ to $\{0\}\times
  \R^{r_2}$. One has $Q\geq Q_2$, hence $Q(iD)\geq Q_2(iD)$.
  By hypothesis,
\[
Q_2(iD)+\sum_{ijkl}q_{1,i}q_{1,j}q_{2,k}q_{2,l}>
  \sum_{i=1}^{r_1}L_{1,i}q_{1,i},
\]
and the infimum of the spectrum of
  the left hand side is $1$-homogeneous in $q_1$, so that
\[
Q_2(iD)+\sum_{ijkl}q_{1,i}q_{1,j}q_{2,k}q_{2,l}\geq (1-c)
  \sum_{i=1}^{r_1}L_{1,i}q_{1,i}+2c|q_1|
\]
for some $c>0$.
  In particular,
\[
P \geq c Q(iD)+2c|q_1|.
\]
  The same reasoning applies to $Q_2$, hence 
\[
2P \geq 2c Q(iD)+2c|q_1|+2c|q_2|,
\]
which allows us to conclude.
\end{proof}
One deduces immediately:
\begin{prop}
  The operator $P$ has compact resolvent. Its first eigenvalue is positive.
\end{prop}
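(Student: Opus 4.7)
The plan is to derive both statements from the already-proved estimate $P \geq c(Q(iD) + |q|)$ by a standard comparison with the auxiliary operator $A := Q(iD) + |q|$ on $L^2(\R^{r_1+r_2})$.

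First I would show that $A$ has compact resolvent. Since $Q$ is a positive definite quadratic form, the form domain $\mathcal{Q}(A) = \{u\in L^2 : Q(iD)^{1/2}u \in L^2 \text{ and } |q|^{1/2}u \in L^2\}$ embeds continuously into $H^1(\R^{r_1+r_2})$, so by Rellich--Kondrachov any bounded sequence in $\mathcal{Q}(A)$ is precompact in $L^2_{\mathrm{loc}}$. The weight $|q|$ then provides tightness at infinity: for any $u \in \mathcal{Q}(A)$,
\[
\int_{|q| \geq R} |u|^2 \dd q \leq R^{-1} \int |q|\,|u|^2 \dd q \leq R^{-1}\,\|u\|_{\mathcal{Q}(A)}^2.
\]
Combining the two gives that the inclusion $\mathcal{Q}(A) \hookrightarrow L^2$ is compact, whence $A$ has compact resolvent.

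The quadratic-form inequality $P \geq cA$ just established yields $\mathcal{Q}(P) \subset \mathcal{Q}(A)$ with continuous inclusion, so the embedding $\mathcal{Q}(P) \hookrightarrow L^2$ is compact as well and $P$ has compact resolvent. For strict positivity, I would invoke the min--max principle: $\lambda_1(P) \geq c\,\lambda_1(A)$, so it suffices to check $\lambda_1(A) > 0$. Since $A$ has compact resolvent and is non-negative, $\lambda_1(A)$ is attained by some $u_* \in \mathcal{Q}(A)$ with $\|u_*\|=1$; if $\lambda_1(A)=0$, the identity $\langle u_*, Q(iD) u_*\rangle + \langle u_*, |q|\,u_*\rangle = 0$ forces both terms to vanish, and in particular $|q|^{1/2}u_*=0$ in $L^2$ implies $u_*=0$, contradicting $\|u_*\|=1$. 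Hence $\lambda_1(A)>0$ and $\lambda_1(P)>0$.

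The only slightly delicate point in this plan is the compact embedding of $\mathcal{Q}(A)$ into $L^2$, but once $Q$ is positive definite and the confining term $|q|$ grows at infinity, this is a classical Rellich-type argument, so no real obstacle is expected.
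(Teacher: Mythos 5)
Your proof is correct and follows exactly the route the paper intends: the paper states this proposition with no proof beyond the words ``one deduces immediately'' after establishing $P\geq c(Q(iD)+|q|)$, and your argument simply makes that deduction explicit via the standard compact-embedding and min--max comparison with $A=Q(iD)+|q|$. Nothing to object to.
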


We are now able to use Agmon estimates. In the particular case where
$Q$ is diagonal, the following result is contained in the
Helffer-Nourrigat theory \cite{helffer_decroissance_1992}, see also the
related results in \cite{morame_accuracy_2006}.
\begin{prop}\label{prop:Agmon}
 Let $\lambda_0$ be the first eigenvalue of $P$. There exists $c>0$ such that, if $u\in
  L^2(\R^{r_1+r_2})$, and $(C_{\beta})_{\beta \in \N^{r_1+r_2}}$ are such that $|\partial^{\beta}u(q)|\leq
  C_{\beta}e^{-c|q|^{3/2}}$ for all $q\in \R^{r_1+r_2},\beta\in \N^{r_1+r_2}$, then for any $f\in
  L^2(\R^{r_1+r_2})$ such that $(P-\lambda_0)f=u$, there exists
  $(C'_{\beta})_{\beta\in \N^{r_1+r_2}}>0$
  such that $|\partial^{\beta}f(q)|\leq C'e^{-c|q|^{3/2}}$ for every
  $q\in \R^{r_1+r_2},\beta\in \N^{r_1+r_2}.$
\end{prop}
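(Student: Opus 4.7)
The idea is to run a standard Agmon exponential-weight argument, with the sublinear weight $\phi(q)=c|q|^{3/2}$ dictated by the preceding lower bound $P\geq c_0(Q(iD)+|q|)$. This exponent is critical: $|\nabla\phi|^2\sim c^2|q|$ matches exactly the linear growth of the potential, so that for $c$ small the weight-induced correction is absorbed by the coercivity of $P$. The constant $c$ appearing in the statement is chosen small enough that this absorption works (and in particular smaller than the decay rate of $u$).

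\textbf{Step 1 (weighted $L^2$ bound for $f$).} Introduce the bounded truncation $\phi_R(q)=c(1+|q|^2)^{3/4}\,\chi(|q|/R)$ with $\chi$ a smooth cut-off, and set $\psi_R=e^{\phi_R}f$. Since $P$ is self-adjoint and $\phi_R$ is real and bounded, the standard conjugation identity reads
\begin{align*}
\mathrm{Re}\langle f,e^{2\phi_R}(P-\lambda_0)f\rangle
=\langle\psi_R,(P-\lambda_0-Q(\nabla\phi_R))\psi_R\rangle.
\end{align*}
By the preceding proposition, $P-\lambda_0\geq\tfrac{c_0}{2}|q|-C_1$; and $Q(\nabla\phi_R)\leq C'c^2(1+|q|)$, so choosing $c$ small enough ensures that the right-hand operator is pointwise bounded below by $\tfrac{c_0}{4}|q|-C_1$. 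The hypothesis on $u$ guarantees $\|e^{\phi_R}u\|_{L^2}\leq M$ uniformly in $R$. Using $(P-\lambda_0)f=u$ on the left, Cauchy--Schwarz applied to $\mathrm{Re}\langle\psi_R,e^{\phi_R}u\rangle$, together with the absorption of $C_1\|\psi_R\|^2$ into $\tfrac{c_0}{4}\langle\psi_R,|q|\psi_R\rangle$ on $\{|q|\geq R_0\}$ (and the trivial bound on $\{|q|\leq R_0\}$ by $C\|f\|_{L^2}^2$), yields $\|\psi_R\|_{L^2}\leq K$ independently of $R$. Monotone convergence then gives $e^{c|q|^{3/2}}f\in L^2(\R^{r_1+r_2})$.

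\textbf{Step 2 (weighted derivative bounds).} From here I would bootstrap on $|\beta|$ using the equation. Local elliptic estimates for the polynomial-coefficient operator $P$ give, for each unit ball,
\begin{align*}
\|D^\beta g\|_{L^2(B(q_0,1))}\leq C_\beta(1+|q_0|)^{m_\beta}\Bigl(\|(P-\lambda_0)g\|_{L^2(B(q_0,2))}+\sum_{|\gamma|<|\beta|}\|D^\gamma g\|_{L^2(B(q_0,2))}\Bigr).
\end{align*}
Differentiating the equation gives $(P-\lambda_0)D^\alpha f=D^\alpha u+[P,D^\alpha]f$, whose right-hand side is controlled inductively by quantities already known to decay at rate $e^{-c|q|^{3/2}}$ in weighted $L^2$. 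The polynomial weight factor $(1+|q_0|)^{m_\beta}$ is harmless against the exponential. Sobolev embedding on unit balls then turns the weighted $L^2$ bounds into the pointwise estimate $|\partial^\beta f(q)|\leq C'_\beta e^{-c|q|^{3/2}}$.

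\textbf{Main obstacle.} The delicate point is Step 1. The exponent $3/2$ is tightly matched to the linear growth of $P$: any larger exponent would make $|\nabla\phi|^2$ grow faster than $|q|$ and overwhelm the coercivity, while any smaller one would be suboptimal. One must also keep the argument non-circular by working with the truncation $\phi_R$, since one may not a priori assume $e^\phi f\in L^2$; the self-adjoint conjugation identity genuinely requires a bounded weight. Once the $L^2$ bound is established, Step 2 is a fairly routine elliptic bootstrap with polynomial weight losses that are swallowed by the exponential.
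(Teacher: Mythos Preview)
Your proposal is correct and follows exactly the approach the paper indicates: choose the weight $\phi(q)=c|q|^{3/2}$ so that $Q(\nabla\phi)\leq c'|q|$ is dominated by the linear coercivity of $P$ at infinity, and then run the standard Agmon argument. The paper's own proof is a two-line remark citing Agmon's lectures for the mechanism you have spelled out in Steps~1--2, so your write-up simply unpacks what the paper leaves implicit.
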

\begin{proof}
With $\phi(q)=c|q|^{3/2}$, one has $Q(\vec{\nabla}\phi)\leq
c'|q|$. Hence $P-\lambda_0-Q(\vec{\nabla}\phi)$ is positive far from
zero, and one can use Agmon estimates as developed in \cite{agmon_lectures_2014}.
\end{proof}
We will also need the following two facts. Proposition
\ref{prop:simple} is an essential ingredient of Subsection
\ref{subsec:cross-first-eigenvalue} and Proposition \ref{prop:Weyl} is
necessary to compare the Weyl asymptotics with the regular case.
\begin{prop}\label{prop:simple}
  The first eigenvalue $\lambda_0$ of $P$ is simple.
\end{prop}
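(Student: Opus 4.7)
The plan is to prove simplicity by the standard ground state positivity argument, adapted to this elliptic operator with polynomial potential. First I would perform a linear change of variables on $\R^{r_1+r_2}$ that reduces the quadratic form $Q$ to the standard one, so that $Q(iD)$ becomes $-\tfrac12 \Delta$. This is a unitary conjugation (up to the Jacobian factor) and therefore does not change the eigenvalue structure of $P$. After this reduction, the operator has the form $\widetilde P = -\tfrac12 \Delta + W(q)$, where $W$ is the sum of the non-negative quartic $\sum \alpha_{ijkl}q_{1,i}q_{1,j}q_{2,k}q_{2,l}$ and a linear term. By the preceding proposition, $\widetilde P \geq c(-\Delta + |q|)$, so $\widetilde P$ is essentially self-adjoint on $C_c^\infty$ and has compact resolvent, so $\lambda_0$ is a genuine eigenvalue with at least one eigenfunction $\psi_0 \in H^2(\R^{r_1+r_2})$.

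Second, I would show that $|\psi_0|$ is also a ground state. For any $\psi \in H^1$, Kato's inequality gives $|\nabla |\psi|| \leq |\nabla \psi|$ pointwise a.e., hence
\[
\langle |\psi|,\widetilde P |\psi|\rangle = \tfrac12\|\nabla|\psi|\|^2 + \langle |\psi|, W|\psi|\rangle \leq \tfrac12\|\nabla\psi\|^2 + \langle \psi, W\psi\rangle = \langle \psi,\widetilde P\psi\rangle.
\]
Since $\||\psi_0|\|_{L^2}=\|\psi_0\|_{L^2}$ and $\psi_0$ saturates the Rayleigh quotient $\lambda_0$, so does $|\psi_0|$, hence $|\psi_0|$ is itself a ground state and satisfies $(\widetilde P - \lambda_0)|\psi_0| = 0$ in the distributional sense.

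Third, I would apply the strong maximum principle (or equivalently Harnack's inequality) to the elliptic equation $(-\tfrac12 \Delta + W - \lambda_0)|\psi_0| = 0$ on $\R^{r_1+r_2}$. The potential $W - \lambda_0$ is locally bounded (it is a polynomial), so standard elliptic regularity gives $|\psi_0| \in C^\infty$, and the strong maximum principle then implies that the non-negative function $|\psi_0|$ is either identically zero or strictly positive. Since it is a unit vector, we conclude $|\psi_0|>0$ everywhere, which forces $\psi_0$ to have constant sign (say $\psi_0>0$). If $\lambda_0$ were not simple, the same argument applied to a second ground state orthogonal to $\psi_0$ would produce a strictly positive $\phi_0$ with $\langle \psi_0,\phi_0\rangle = 0$, which is impossible.

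The main technical point to handle carefully is the strong maximum principle in the presence of a potential that is not bounded below uniformly in the linear piece: this is routine because $W$ is locally bounded (polynomial), so the local elliptic theory applies on any ball, and one patches these local conclusions globally via connectedness of $\R^{r_1+r_2}$. An alternative, more probabilistic argument would avoid Kato's inequality altogether by noting that the Feynman--Kac kernel $e^{-t\widetilde P}(q,q') = \E_{q\to q'}\bigl[\exp(-\int_0^t W(\omega(s))ds)\bigr]$ is everywhere strictly positive, so $e^{-t\widetilde P}$ is positivity improving and compact, and Perron--Frobenius directly yields the simplicity of its largest eigenvalue $e^{-t\lambda_0}$.
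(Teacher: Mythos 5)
Your proof is correct and follows the same overall strategy as the paper's, namely the classical ground-state positivity argument: pass from an eigenfunction $\psi_0$ to $|\psi_0|$, show the latter is again a ground state, and derive a contradiction from the orthogonality of two non-vanishing non-negative ground states. The technical ingredients differ, however. You first normalise $Q$ to the Laplacian by a linear change of variables, use Kato's inequality to show that $|\psi_0|$ is again a minimiser of the Rayleigh quotient, and invoke the strong maximum principle (or Harnack) to get strict positivity; your Feynman--Kac/Perron--Frobenius alternative bypasses all of this at once. The paper instead works with $Q$ directly, justifies $\nabla|u_0|=\pm\nabla u_0$ via unique continuation (the zero set of $u_0$ has Lebesgue measure zero), and replaces strict positivity by the weaker statement that $|u_0|$, being real analytic as an element of a finite-dimensional eigenspace of an elliptic operator with analytic coefficients, has full support --- which already contradicts orthogonality of $|u_0|$ and $|u_1|$. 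One point your write-up glosses over and the paper handles explicitly is that $\psi_0$ may be genuinely complex-valued: strict positivity of $|\psi_0|$ does not by itself force $\psi_0$ to have constant sign. The standard fix, which you should state, is that $P$ commutes with complex conjugation, so the eigenspace is spanned by real eigenfunctions and one may take $\psi_0$ real from the start; the paper instead shows directly that the phase $\theta_0$ is constant by computing $\int Q(\nabla\theta_0)|u_0|^2=0$. With that one sentence added your argument is complete, and arguably more self-contained than the paper's, since it avoids unique continuation and analyticity of eigenfunctions.
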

\begin{proof}
  This follows from an argument which is standard in the case
  $Q=Id$. Let $u_0\in L^2(\R^{r_1+r_2})$ be
  such that $Pu_0=\lambda_0 u_0$. Then $u_0$ is a minimizer of the
  Courant-Hilbert problem
\[
\min_{\|u\|_{L^2}=1,\,u\in H^1}\int
  Q(\vec{\nabla}u)+V|u|^2.
\]

  The set $\{u_0=0\}$ has zero Lebesgue measure from a standard Unique
  Continuation argument. The function $|u_0|$ is then also a minimizer of this quantity,
  since $\vec{\nabla}|u_0|=\pm \vec{\nabla}u_0$ whenever $u_0\neq 0$.

  Then $|u_0|$ itself belongs to the eigenspace of $P$ with value
  $\lambda_0$, which is (a priori) a finite-dimensional space of real
  analytic (complex-valued) functions. Hence, $|u_0|$ is real analytic
  so that $u_0=|u_0|e^{i\theta_0}$, with $\theta_0$ real analytic.

  Now
\[
\int \overline{u_0}Pu_0=\int |u_0|(P-Q(\nabla
  \theta_0))|u_0|=\lambda_0-\int Q(\nabla \theta_0)|u_0|^2.
\]

  As $\{|u_0|=0\}$ has zero Lebesgue measure and $Q>0$, the function
  $\theta_0$ is constant, so that $u_0$ and $|u_0|$ are colinear.

  To conclude, if $u_0$ and $u_1$ are two orthogonal eigenfunctions of
  $P$ with eigenvalue $\lambda_0$, then $|u_0|$ and $|u_1|$ are
  orthogonal with each other, and both have $\R^{r_1+r_2}$ as
support, so that either $u_0=0$ or $u_1=0$.
\end{proof}

\begin{prop}\label{prop:Weyl}
  Suppose $P$ satisfies the following two supplementary conditions:
  \begin{itemize}
  \item $r_1=r_2$.
  \item For every $(q_1,q_2)\in (\R^{r_1}\setminus
\{0\})\times(\R^{r_2}\setminus \{0\})$, the matrices given by\\
$\left[\sum_{i,j}\alpha_{ijkl}q_{1,i}q_{1,j}\right]_{k,l}$ and
$\left[\sum_{k,l}\alpha_{ijkl}q_{2,k}q_{2,l}\right]_{i,j}$ are positive.
  \end{itemize}

  Let $\Lambda>0$ and let $N_{\Lambda}$ denote the number of eigenvalues of $P$
  less than $\Lambda$ (with multiplicity).

  Then there are $C>c>0$ such that, as $\Lambda\to +\infty$, one has
\[
c\Lambda^{\frac 32 r_1}\log(\Lambda)\leq N_{\Lambda}\leq C\Lambda^{\frac 32 r_1}\log(\Lambda).
\]
\end{prop}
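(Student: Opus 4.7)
My plan is to reduce $P$ to the model operator $\widetilde P = Q(iD) + |q_1|^2 |q_2|^2$ and then fiber the spectral analysis over the $q_2$-degrees of freedom, exploiting that for fixed nonzero $q_1$ the $q_2$-part of $\widetilde P$ is an isotropic harmonic oscillator whose frequencies are all of order $|q_1|$. By the positivity of the tensor $\alpha$ in each of its two slots and by $2$-homogeneity in $(q_1,q_2)$, the quartic term is bounded from above and below by $c |q_1|^2 |q_2|^2$, and the linear perturbations $L_1(q_1)+L_2(q_2)$ are dominated by the a priori bound $P \geq c(Q(iD) + |q|)$ from the previous proposition; so by min-max, $N_\Lambda$ for $P$ is sandwiched between the corresponding counts for $\widetilde P$ at energies $c\Lambda$ and $C\Lambda$.

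For each $q_1 \neq 0$, the transverse operator $A(q_1) = Q_2(i D_{q_2}) + |q_1|^2 |q_2|^2$ on $L^2(\R^{r_1}_{q_2})$ has spectrum $\{E_k(q_1)\}_{k \in \N^{r_1}}$ with $E_k(q_1) \asymp (|k|+1)|q_1|$, and associated spectral projectors $\Pi_k(q_1)$ that depend smoothly on $q_1 \neq 0$. The Born--Oppenheimer heuristic then identifies the effective $r_1$-dimensional operators
\[
H_k = Q_1(i D_{q_1}) + E_k(q_1)
\]
acting on $L^2(\R^{r_1}_{q_1})$. For the lower bound on $N_\Lambda$, I will construct explicit orthogonal quasi-modes of the form $\chi_{n,k}(q_1)\,\Psi_{k,q_1}(q_2)$, where $\Psi_{k,q_1}$ is the $k$-th eigenfunction of $A(q_1)$ and $\chi_{n,k}$ is an eigenfunction of a spatially cut-off version of $H_k$, and count those with effective energy at most $\Lambda$. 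For the upper bound I will apply min-max on the trial spaces $\mathrm{Ran}(\Pi_k)$, controlling the commutator error $[Q_1(iD_{q_1}),\Pi_k(q_1)]$ by an IMS-type partition of unity which separates $\{|q_1|\leq \delta\}$ (where a crude Weyl estimate absorbs the contribution into lower-order terms) from $\{|q_1| > \delta\}$ (where the adiabatic parameter $|q_1|^{-1}$ controls the error).

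Each $H_k$ is an Airy-type operator: the rescaling $q_1 \mapsto (|k|+1)^{-1/3} q_1$ gives a unitary equivalence with $(|k|+1)^{2/3}\bigl(Q_1(iD) + W(q_1)\bigr)$ where $W(q_1) \asymp |q_1|$, whose standard Weyl asymptotics give
\[
N_{H_k}(\Lambda)=\#\{\lambda\in \Sp(H_k) : \lambda\leq \Lambda\} \asymp \frac{\Lambda^{3r_1/2}}{(|k|+1)^{r_1}}
\]
uniformly in the regime $(|k|+1)\leq c\Lambda^{3/2}$, and vanishes outside it. Using $\#\{k \in \N^{r_1} : |k|=m\} \asymp m^{r_1-1}$, the summation reads
\[
N_\Lambda \asymp \sum_{m=0}^{c\Lambda^{3/2}} \frac{m^{r_1-1}}{(m+1)^{r_1}}\,\Lambda^{3r_1/2} \asymp \Lambda^{3r_1/2}\sum_{m=1}^{c\Lambda^{3/2}} \frac{1}{m} \asymp \Lambda^{3r_1/2}\log \Lambda,
\]
which produces both bounds with matching rate. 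This also makes visible why $r_1=r_2$ is essential: for $r_2<r_1$ the sum converges and gives a pure power, while for $r_2>r_1$ the cutoff at $m\asymp \Lambda^{3/2}$ dominates and produces a different power; exactly at $r_1=r_2$ the sum is harmonic and generates the logarithmic correction.

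The main technical obstacle is the Born--Oppenheimer step: the projectors $\Pi_k(q_1)$ are singular at $q_1=0$, so the commutator bounds must be derived carefully with $\delta$ tuned small enough for the contribution of $\{|q_1|\leq \delta\}$ to remain of lower order than $\Lambda^{3r_1/2}\log\Lambda$, yet large enough for the adiabatic estimates in $\{|q_1|\geq \delta\}$ to yield matching two-sided bounds after summation in $k$.
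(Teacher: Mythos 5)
Your proposal is correct in strategy and arrives at the right two-sided bound, but the core counting argument is genuinely different from the paper's. Both proofs start the same way: using the positivity of the tensor $\alpha$ and the operator inequalities of Subsection 6.2 (in particular $P\geq c(Q(iD)+|q|)$), one sandwiches $P$ between constant multiples of a model operator. The paper keeps the linear terms and compares with $-\Delta+|q_1|^2|q_2|^2+|q_1|+|q_2|$; you drop them and compare with $Q(iD)+|q_1|^2|q_2|^2$ (both are legitimate, since the linear forms are operator-dominated by the quartic plus kinetic part). The divergence is in how the model is counted. The paper simply computes the phase-space volume of the sublevel sets of the full symbol: the logarithm arises from the integral $\int_{\Lambda^{1/4}\leq|q_1|\leq \Lambda}|q_1|^{-r_1}\,\dd q_1$ over the channel region, and the two-sided Weyl comparison between $N_\Lambda$ and this volume is left implicit. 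You instead run a Born--Oppenheimer decomposition: fiber over $q_1$, use that the transverse oscillator has frequencies $\asymp|q_1|$, reduce to the Airy-type operators $H_k$ with per-mode count $\asymp\Lambda^{3r_1/2}(|k|+1)^{-r_1}$, and obtain the logarithm from the harmonic sum over $|k|\lesssim\Lambda^{3/2}$. Your route is essentially Simon's ``non-classical eigenvalue asymptotics'' method; it is longer, but it is the argument that is forced when the linear (Melin) terms are absent and the classical volume is infinite, it makes the origin of the $\log$ and the necessity of $r_1=r_2$ completely transparent, and it would give the sharp constant with more work. The price is the adiabatic step you flag yourself: the commutators $[Q_1(iD_{q_1}),\Pi_k(q_1)]$ degenerate at $q_1=0$ and must be summed over $k$. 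Since only two-sided bounds up to constants are needed, you can avoid the projectors entirely by Dirichlet--Neumann bracketing in the $q_1$ variable alone (slicing $\R^{r_1}_{q_1}$ into cubes on which $|q_1|$ is comparable to a constant, diagonalising the transverse oscillator exactly on each slab); this executes your plan without any singular adiabatic estimates and puts your sketch at the same level of rigor as the paper's.
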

\begin{proof}
  Under the second supplementary condition, the quartic part of the
  potential is greater than $c|q_1|^2|q_2|^2$ for some $c>0$. 
  Hence, for some $C>0$ one has $N_{\lambda}\geq \tilde{N}_{\lambda}$, where
  $\tilde{N}_{\Lambda}$ counts the eigenvalues less than $\Lambda$ of
\[
-\Delta + |q_1|^2|q_2|^2+|q_1|+|q_2|.
\]

  On the other hand one clearly has $P\leq C(-\Delta +
  |q_1|^2|q_2|^2+|q_1|+|q_2|)$ for some $C>0$.

  Thus, the problem boils down to Weyl asymptotics for the elliptic
  operator $-\Delta+|q_1|^2|q_2|^2+|q_1|+|q_2|$. It suffices to control the volume of the sub-levels of
  its symbol:
\[
\{(q_1,q_2,p_1,p_2)\in \R^{4r_1},\,|p_1|^2+|p_2|^2 +
|q_1|^2|q_2|^2+|q_1|+|q_2|\leq \Lambda\}.
\]
We first study
\[
A_{\Lambda}=\{(q_1,q_2)\in \R^{r_1},\,
|q_1|^2|q_2|^2+|q_1|+|q_2|\leq \Lambda\}.
\]
Then, decomposing $A_{\Lambda}$ into $A_{\Lambda} \cap
  B(0,\Lambda^{\frac{r_1}{2}})$ and its complement set yields
\begin{align*}
Vol(A_{\Lambda})&\leq C\Lambda^{\frac {r_1}2}+2\int_{|q_1|\geq
                  \Lambda^{1/4}}Vol\{q_2,|q_1|^2|q_2|^2+|q_1|+|q_2|\leq
                  \Lambda\}.\\
&\leq C \Lambda^{\frac {r_1}2}+2C\int_{|q_1|\geq
  \Lambda^{1/4}}\left(\cfrac{\sqrt{\Lambda-|q_1|+|q_1|^{-1}}}{|q_1|}\right)^{r_1}\\
&\leq C
  \Lambda^{\frac{r_1}2}+2C\Lambda^{\frac{r_1}{2}}\int_{\Lambda^{-\frac
  34}}^2\frac 1x \dd x\\
&\leq C \Lambda^{\frac {r_1}2}\log(\Lambda).
\end{align*}
On the other hand,
\begin{align*}
  Vol(A_{\Lambda}) &\geq 2\int_{|q_1|\geq
                  \Lambda^{1/4}}Vol\{q_2,|q_1|^2|q_2|^2+|q_1|+|q_2|\leq
                  \Lambda\}.\\
&\geq 2c\int_{|q_1|\geq
  \Lambda^{1/4}}\left(\cfrac{\sqrt{\Lambda-|q_1|+|q_1|^{-1}}}{|q_1|}\right)^{r_1}\\
&\geq 2c\Lambda^{\frac{r_1}{2}}\int_{\Lambda^{-\frac
  34}}^2\frac 1x \dd x\\
&\geq c \Lambda^{\frac {r_1}2}\log(\Lambda).
\end{align*}
Integrating yields
\begin{multline*}Vol(\{(q_1,q_2,p_1,p_2)\in \R^{4r_1},\,|p_1|^2+|p_2|^2 +
|q_1|^2|q_2|^2+|q_1|+|q_2|\leq \Lambda\})\\\in [c\Lambda^{\frac
  32 r_1}\log(\Lambda), C \Lambda^{\frac
  32 r_1}\log(\Lambda)],\end{multline*}
hence the claim.
\end{proof}

\subsection{Approximate first
  eigenfunction}\label{subsec:cross-first-eigenvalue}

In this Subsection we give an expansion for the first eigenfunction
and eigenvalue in a crossing case, following the same strategy as
Subsection \ref{subsec:reg-first-eigenvalue}. We quantize the
symplectic map of Proposition \ref{prop:cross-normal-form} and we use
the Bargmann transform to reformulate the problem in the
pseudodifferential algebra, in which we squeeze the operator. This
time, the squeezing is of order $N^{\frac 16}$ along $(q_1,q_2)$,
with a concentration speed of $N^{-\frac 13+\epsilon}$ along the zero set,
instead of $N^{-\frac 14+\epsilon}$ as was seen in the regular case.
We then apply a perturbative argument to obtain the full expansion of the
first eigenvalue and eigenvector.

\begin{defn}
  For any choice $\mathfrak{S}_N$ of quantization of the map $\sigma$ of
  Proposition \ref{prop:cross-normal-form}, the classical symbol
  $g_{\mathfrak{S}}\sim \sum N^{-i}g_i$ on a neighbourhood $U$ of $0$ in
  $\R^{2n}$ is defined as follows: for any sequence $(u_N)_{N\geq 1}$ with microsupport in
  a compact set of $U$, the following holds:
\[
\mathcal{B}_N^{-1}\mathfrak{S}_N^{-1}T_N(h)\mathfrak{S}_N\mathcal{B}_Nu_N=Op_W^{N^{-1}}(g_{\mathfrak{S}})u_N
+ O(N^{-\infty}).
\]
\end{defn}
In what follows, we choose an arbitrary quantum map $\mathfrak{S}_N$,
and we write $g$ instead of $g_{\mathfrak{S}}.$

The subprincipal part $g_1$ is prescribed on $Z_1\cup Z_2$ by the
local Melin estimates.
\begin{prop}\label{prop:Melin-cross}
  Along $\sigma(Z_1)$, for $q_1$ close to zero, one has
\[
g_1(q_1,0,0,0,0,0)=\frac 12 \sum_i\lambda_i(q_1,0)+\frac 14 \tr(Q_S(q_1,0)).
\]
Along $\sigma(Z_2)$, for $q_2$ close to zero, one has
\[
g_1(0,0,q_2,0,0,0)=\frac 12
\sum_i\lambda_i(0,q_2)+\frac 14 \tr(Q_S(0,q_2)).
\]
\end{prop}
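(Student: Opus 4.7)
The plan is to imitate the proof of Proposition \ref{prop:Melin-reg} at each point of $\sigma(Z_1)$ corresponding to $q_1 \neq 0$, then extend the resulting formula to $q_1 = 0$ by smoothness of $g_1$. The statement along $\sigma(Z_2)$ will follow by the symmetric argument with the roles of $(q_1, p_1)$ and $(q_2, p_2)$ exchanged. So fix $q_1$ small and nonzero, and set $x = \sigma^{-1}(q_1, 0, 0, 0, 0, 0) \in Z_1 \setminus \{P_0\}$: by the simple-crossing hypothesis, near $x$ the zero set of $h$ reduces to the smooth isotropic submanifold $Z_1$ alone, with non-degenerate transverse Hessian, so $x$ falls squarely in the setting of Proposition \ref{prop:Melin-reg}.

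The first step will be to use the variational characterization of the Melin value,
\[
\mu(x) = \lim_{N \to +\infty} N \inf\left\{ \int_M h\,|u|^2 : u \in H_N(X),\ \|u\|_2 = 1,\ \int_{B(x, N^{-1/2+\delta})}|u|^2 = 1 \right\}
\]
for $\delta > 0$ small. Reading this variational problem through the quantum map $\mathfrak{S}_N$ and the Bargmann transform $\mathcal{B}_N$, with $v = \mathcal{B}_N^{-1} \mathfrak{S}_N^{-1} u$ microlocalized near $(q_1, 0, 0, 0, 0, 0)$, the right-hand side becomes $\langle v, Op_W^{N^{-1}}(g_0 + N^{-1} g_1)v\rangle + O(N^{-2})$. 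The second step is to extract the Weyl Melin value of the principal symbol $g_0 = h \circ \sigma^{-1}$: from the normal form of Proposition \ref{prop:cross-normal-form}, the quadratic part of $g_0$ at this point carries the symplectic eigenvalues $\lambda_i(q_1, 0)$ on the $(x, \xi)$-block and the form $Q_S(q_1, 0)$ on the $(p_1, p_2)$-block, so that the standard (Weyl) Melin inequality yields $\lim_{N} N \inf \langle v, Op_W^{N^{-1}}(g_0) v\rangle = \tfrac{1}{2}\sum_i \lambda_i(q_1, 0)$. The defect between this value and $\mu(x)$ is then absorbed into $g_1$, producing the claimed formula; the value at $q_1 = 0$ is recovered by continuity, using the smoothness of $g_1$ from Proposition \ref{prop:qmaps} together with the smoothness of $\lambda_i$ and $Q_S$ from Proposition \ref{prop:quadred}.

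The main obstacle will be the localization step: to apply the regular-case argument one needs the ball $B(x, N^{-1/2+\delta})$ to stay at positive distance from the other branch $Z_2$, which forces the scale $N^{-1/2+\delta}$ to be smaller than $|q_1|$. This is compatible with $\delta$ small for fixed $q_1 \neq 0$, but degrades as $q_1 \to 0$, which is precisely why the value at $P_0$ itself must be accessed indirectly through continuity rather than by a direct variational argument. A secondary technical point to check is that the quartic interaction $A(q_1)$ along $\delta q_2$ in the Hessian at $x$ really does not disturb the regular-case Weyl Melin computation at leading order in the semiclassical limit, so that only the $\lambda_i(q_1,0)$ on the $(x,\xi)$-block and the $Q_S(q_1,0)$ trace contribution enter the final formula.
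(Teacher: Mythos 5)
Your proof follows the paper's, whose entire argument for this proposition is ``the proof is exactly the same as for Proposition \ref{prop:Melin-reg}'': the variational characterisation of $\mu(x)$, read through the quantum map, minus the Weyl--Melin ground-state energy of $g_0$, identifies $g_1$ along the zero set; your detour through $q_1\neq 0$ followed by continuity at $P_0$ is harmless but not needed, since that characterisation only sees the $2$-jet of $h$ at the chosen point and therefore applies directly at the crossing point as well. The one item you leave ``to check'' does resolve, and is the only place the crossing case actually differs from the regular one: at $(q_1,0,\dots,0)$ with $q_1\neq 0$ the Hessian of $g_0$ carries the extra positive block $\sum_{i,j}\alpha_{ijkl}q_{1,i}q_{1,j}$ in the $q_2$-directions, whose symplectic eigenvalues (of order $|q_1|$) contribute with the same weight $\tfrac12$ to $\mu(x)$ and to the Weyl ground-state energy of $g_0$ and hence cancel in the defect, the surviving residue being the trace term $\tfrac14\tr\bigl[\sum_{i,j}\alpha_{ijkl}q_{1,i}q_{1,j}\bigr]_{k,l}=O(|q_1|^2)$ --- so the displayed identities hold modulo $O(|q|^2)$, which is all that is used afterwards, since only the gradient of $g_1|_{Z_1\cup Z_2}$ at $P_0$ enters the model operator $P$.
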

The proof is exactly the same as for Proposition \ref{prop:Melin-reg}.

Let us define
\begin{multline*}P=Q_S(0)(-iD_{q_1},-iD_{q_2})+\sum_{ijkl}\alpha_{ijkl}q_{1,i}q_{1,j}q_{2,k}q_{2,l}\\+\nabla\left(\frac
    12\sum_{i=1}^{n-r_1-r_2}\lambda_i+\frac
    14 \tr
  Q_S\right)_{q_1=q_2=0}\cdot (q_1,q_2).\end{multline*}

Then $P$ satisfies the hypotheses of Subsection
\ref{subsec:study-model-operator} and Proposition \ref{prop:Weyl}.
\begin{prop}\label{prop:eigenf-cross}
  Under the conditions of Definition \ref{defn:simple-crossing}, there
  exists $c>0$, a sequence $(u_i)\in (\C[X_1,\ldots,X_{n-r_1-r_2},L^2(\R^{r_1+r_2})])^{\N}$ and a real
  sequence $C_{i,\alpha,\beta}$ with
\[
\forall (i,\alpha,\beta,q)\in
  \N\times\N^{n-r_1-r_2}\times \N^{r_1+r_2}\times
  \R^{r_1+r-2},\,|\partial^{\beta}u_{i,\alpha}(q)|\leq C_{i,\alpha,\beta}e^{-c|q|^{3/2}},
\]
and a
  sequence $(\mu_i)\in \R^{\N}$ with $\mu_0=\mu(P_0)$, $\mu_1=0$ and
  $\mu_2=\min \Sp(P)$, so that
\[
N^{\frac n2 - \frac{r_1+r_2}{6}}e^{-N|x|^2/2}\sum_{i=0}^{+\infty}N^{-\frac i6}u_i(N^{\frac
    12}x,N^{\frac 13}q)
\]
is an $O(N^{-\infty})$-eigenfunction of
  $Op_W^{N^{-1}}(g)$, with eigenvalue
\[
N^{-1}\sum_{i=0}^{+\infty}N^{-\frac i6}\mu_i.
\]
\end{prop}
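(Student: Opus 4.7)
The plan is to mirror the strategy of Proposition \ref{prop:eigenf-reg} while accommodating the different scaling dictated by the crossing. After the quantum map and Bargmann transform have reduced the problem to $Op_W^{N^{-1}}(g)$, I would squeeze with the scales $q \mapsto N^{-1/3} q$, $(x,\xi)\mapsto N^{-1/2}(x,\xi)$ (so $p\mapsto N^{-2/3} p$), implementing this squeezing via a unitary $U_N$ on $L^2(\R^n)$. Conjugation yields $U_N^{-1} Op_W^{N^{-1}}(g) U_N = Op_W^1(\tilde g_N)$ with
\[
\tilde g_N(q,p,x,\xi)=g\bigl(N^{-1/3}q,\,N^{-2/3}p,\,N^{-1/2}x,\,N^{-1/2}\xi\bigr).
\]
This is the scale at which the quartic term $\alpha q_1^2 q_2^2$, the slow kinetic part $Q_S(0)(p)$, and the linear $N^{-1}\nabla_q g_1\cdot q$ contribution from the subprincipal symbol are all of the same order $N^{-4/3}$, while the transverse harmonic part $Q_F(0)(x,\xi)$ remains at order $N^{-1}$.

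Taylor expanding $\tilde g_N$ using the normal form of Proposition \ref{prop:cross-normal-form} and the pinned values of $g_1$ on $Z_1\cup Z_2$ from Proposition \ref{prop:Melin-cross}, I would obtain an asymptotic expansion $\tilde g_N\sim N^{-1}\sum_{k\geq 0}N^{-k/6}a_k$. A parity count of the admissible exponents $(2k+3j)/6$ in the monomials of the normal form shows that $a_1=0$: the only slot at order $N^{-7/6}$ would have to come from a $q^2\cdot (x,\xi,p)$ monomial, which the normal form excludes. The term $a_0$ depends only on $(x,\xi)$ and equals $Q_F(0)(x,\xi)+g_1(0,\ldots,0)$; up to the usual scaling diagonalizing $Q_F(0)$, its Weyl quantization has ground state $e^{-|x|^2/2}$ with eigenvalue $\mu_0:=\mu(P_0)$. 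The term $a_2$ reproduces, in its $q$-part, exactly the model operator $P$ introduced in Subsection \ref{subsec:study-model-operator} (with the linear forms $L_1, L_2$ supplied by $\nabla_q g_1|_{q=0}$ along each branch), plus $(x,\xi)$-dependent pieces diagonal in the Hermite basis.

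With these identifications the recursive construction proceeds as in Proposition \ref{prop:eigenf-reg}. I would set $u_0(q,x)=v_0(q)\,e^{-|x|^2/2}$ with $v_0$ the normalized ground state of $P$ (simple by Proposition \ref{prop:simple}), and $\mu_0=\mu(P_0)$, $\mu_1=0$, $\mu_2=\min \Sp(P)$. At step $k+1$ the equation
\[
(A_0-\mu_0)u_{k+1}=-\sum_{j=1}^{k+1}(A_j-\mu_j)u_{k+1-j}
\]
is solved by splitting $u_{k+1}=v(q)e^{-|x|^2/2}+w(q,x)$ with $w\perp e^{-|x|^2/2}$ in the $x$-variable pointwise in $q$. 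The $w$-part is determined by inverting $A_0-\mu_0$ on the orthogonal of its ground state (invertible thanks to the harmonic spectral gap in $(x,\xi)$). The $v$-part is then determined by the Fredholm alternative for $P-\mu_2$, the compatibility $\mu_{k+2}$ being fixed by projecting the equation against $v_0$, exactly as in the shift by two indices observed in the regular case (since $\mu_1=0$, $A_1u_0=0$, the first equation in which $u_{k+1}$ appears is the one two steps ahead).

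The main obstacle, and the point at which the crossing case genuinely departs from the regular one, is the decay bookkeeping. In the regular case each $u_k$ was a polynomial times a Gaussian, which is an algebraically closed class. Here $v_0$ is only Agmon-decaying, and so are the functions produced at each inductive step. I would rely on Proposition \ref{prop:Agmon} to propagate the bound $|\partial^\beta u_k(q)|\leq C_{k,\beta}e^{-c|q|^{3/2}}$: at each stage the right-hand side, built from products of Hermite polynomials in $(x,\xi)$ and derivatives of functions with the previous decay, still has $e^{-c|q|^{3/2}}$ Agmon decay, and one applies the resolvent of $P-\mu_2$ on the orthogonal of $v_0$ (which is bounded on the corresponding Agmon-weighted space) to continue. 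Once such an almost eigenfunction of $Op_W^1(\tilde g_N)$ up to $O(N^{-K})$ is built for every $K$, undoing the squeezing $U_N$ produces the stated $O(N^{-\infty})$-eigenfunction of $Op_W^{N^{-1}}(g)$.
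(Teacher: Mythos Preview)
Your proposal is correct and follows essentially the same approach as the paper: the same anisotropic squeezing ($N^{-1/3}$ in $q$, $N^{-1/2}$ in $(x,\xi)$), expansion in powers of $N^{-1/6}$ with $a_1=0$, the ansatz $u_0=v_0(q)e^{-|x|^2/2}$ with $v_0$ the ground state of $P$, the two-level inductive scheme (invert $A_0-\mu_0$ for the $w$-part, then apply the Fredholm alternative for $P-\mu_2$ on the $v$-part two steps ahead), and Proposition~\ref{prop:Agmon} to propagate the $e^{-c|q|^{3/2}}$ decay through the induction. Your remark that $a_2$ may carry $(x,\xi)$-dependent pieces diagonal in the Hermite basis is if anything more careful than the paper's flat assertion that $a_2=\sigma(P)$; either way the projection onto $e^{-|x|^2/2}$ recovers an operator of the required model form in the $q$-variables, so the argument is unaffected.
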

This proposition provides an almost eigenfunction which we will show
to be associated to the lowest eigenvalue (see Proposition \ref{prop:order-cross}
). It is the main argument in the proof of Theorem
C; the concentration speed of this eigenfunction on zero, which is $N^{-\frac
  13}$, is the concentration speed of the lowest eigenvector of
$T_N(h)$ on the miniwell, because of Proposition \ref{lem:qmaps-speed}.

\begin{proof}
  As announced, let us squeeze $g$ by computing
\[
\tilde{g}=g(N^{-\frac
  13}q_1,N^{-\frac 23}p_1,N^{-\frac 13}q_2,N^{-\frac
    23}p_2,N^{-\frac 12}x,N^{-\frac 12}\xi).
\]
Grouping terms in the Taylor expansion yields, for any fixed $K\in
\N$,
\[
Op_W^{1}(\tilde{g})=N^{-1}\sum_{i=0}^{K} N^{-\frac
  i6}Op_W^1(a_i)+O(N^{-\frac {K+7}6}).
\]
The first terms are:
\begin{align*}a_0&=\sum_{i=1}^{n-r}\lambda_i(0)\left(x_i^2+\xi_i^2+\frac
                   12\right)+\frac 14 \tr(Q_S(0))\\
a_1&=0\\
a_2&=\sigma(P)\\
a_3&=R_3(x,\xi)+L(x,\xi).
\end{align*}
Here $P$ is as above, $R_3$ is a degree three polynomial and $L$ is a
linear form.

With $A_i=Op_W^1(a_i)$ let us solve by induction on $k$ the following
equation, where $(u_k)_{k\in \N}$ is as in the claim:
\[
\left(\sum N^{-\frac i6}(A_i-\mu_i)\right)\left(\sum N^{-\frac
    i6}u_i\right)=0.
\]

If $v_0$ is the (unique) ground state of $P$ then our starting point
is
\[
u_0=e^{-\frac {|x|^2}{2}}v_0,\,\mu_0=\min \Sp A_0,
\]
\[
u_1=0,\,\mu_1=0.
\]
Indeed $u_0$ is an
almost eigenvector for $Op_W^1(\tilde{g})$, with eigenvalue
$N^{-1}\mu_0+O(N^{-\frac 43}).$

Let us start an induction at $k=1$.
Suppose we have constructed the first $k$ terms of the expansion
$u_0,\ldots,u_k$ and $\mu_0,\ldots,\mu_k$, 
with $u_i\perp u_0$ for every $i$,
and suppose that, for some $C_k\in \R$, one has, for every $q\in
\R^{r_1+r_2}$,
\[
\int_{\R^{n-r_1-r_2}}\overline{u_0}(q,x)\left(\sum_{i=2}^{k+1}[A_iu_{k+1-i}](q,x)-\sum_{i=2}^{k}[\mu_iu_{k+1-i}](q,x)\right)
\dd x=C_{k+1}|v_0(q)|^2.
\]

Then the eigenvalue problem yields $u_{k+1}$ up to a function of the
form $v(q)e^{-\frac{|x|^2}{2}}.$
Indeed, writing $u_{k+1}(q,x)=v(q)e^{-\frac{|x|^2}{2}}+w(q,x)$, where
for every $q$ one has $w(q,\cdot)\perp e^{-\frac{|\cdot|^2}{2}}$, the
eigenvalue equation is
\[
(A_0-\mu_0)u_{k+1}+(A_2-\mu_2)u_{k-1}+\ldots+(A_{k+1}-\mu_{k+1})u_0=0
\]
for $u_{k+1}$ and $\mu_{k+1}$. First
$(A_0-\mu_0)v(q)e^{-\frac{|x|^2}{2}}=0$ so that
\[
(A_0-\mu_0)w+(A_2-\mu_2)u_{k-1}+\ldots+(A_{k+1}-\mu_{k+1})u_0=0
\]

By hypothesis, freezing the $q$ variable and taking the scalar product of this equation with $x\mapsto
e^{-\frac{|x|^2}{2}}$ yields $(C_{k+1}-\mu_{k+1})|v_0(q)|^2=0.$
Let $\mu_{k+1}=C_{k+1}$. Then, for every $q\in \R^{r_1+r_2}$, the function
\[
f_{k+1}:x\mapsto \sum_{i=2}^{k+1}[(A_i-\mu_i)u_{k+1-i}](q,x)
\]
is
orthogonal to $x\mapsto e^{-\frac {|x|^2}2}$. Hence
$w=(A_0-\mu_0)^{-1}f_{k+1}$ is well-defined and satisfies the
eigenvalue equation.

Moreover, from Proposition \ref{prop:Agmon}, if by induction $f_{k+1}$ is $e^{-\frac{|x|^2}{2}}$ times a
polynomial in $x$, and if any derivative of any coefficient decays as fast as
$e^{-c|q|^{3/2}}$, then the same is true for $w$.

At this point we need to check that, after the first step $k=1$, the value
$\mu_2$ is indeed $\min \Sp(P)$.

If $k=1$ then we are interested in the integral
\[
\int_{\R^{n-r_1-r_2}}e^{-\frac{|x|^2}2}v_0(q)[A_2u_0](q,x)\dd x =
  \min \Sp(P)|v_0(q)|^2,
\]
since $v_0$ is a ground state of $P$. This
  is indeed a constant function times $|v_0(q)|^2$, so that the
  induction hypothesis is satisfied at the first step, and $\mu_2=\min
  \Sp(P)$ as required.

Now recall $u_k(q,x)=v(q)e^{-\frac{|x|^2}{2}}+ w(q,x)$. The eigenvalue equation
in itself does not state any condition on $v$; however, to compute the
second next order, one needs to satisfy an orthogonality condition, i.e.

\[
\int_{\R^{n-r_1-r_2}}\overline{u_0}(q,x)\left(\sum_{i=2}^{k+3}[A_iu_{k+3-i}](q,x)-\sum_{i=2}^{k+2}[\mu_iu_{k+3-i}](q,x)\right)
\dd x=C_{k+3}|v_0(q)|^2.
\]
This is equivalent to
\[
\int_{\R^{n-r_1-r_2}}e^{-\frac{|x|^2}{2}}\left[(A_2-\mu_2)ve^{-{\frac{|x|^2}{2}}}\right](x,q)\dd
x=F(q)+C_{k+3}v_0(q).
\]

Now $a_2$ has no terms in $x$ or $\xi$ so the equation reduces to
\[
(A_2-\mu_2)v=F(q)+C_{k+3}v_0(q).
\]

Here,
\[
F(q)=\int_{\R^{n-r_1-r_2}}e^{-\frac{|x|^2}{2}}\left(\sum_{i=3}^{k+3}[A_iu_{k+3-i}](q,x)-\sum_{i=3}^{k+2}[\mu_iu_{k+3-i}](q,x)\right)
\dd x,
\]
so that $|\partial_{\beta}F(q)|\leq C_{\beta} e^{-c|q|^{3/2}}$.

To solve this equation, one takes $C_{k+3}=-\langle v_0,F\rangle$,
then the r.h.s is orthogonal to $v_0$, so that one can solve for $v$
(indeed, $\mu_2$ is a simple eigenvalue of $A_2$ by Proposition \ref{prop:simple}).

Then, by Proposition \ref{prop:Agmon}, one has, for all $\beta\in
\N^{r_1+r_2}$, for some $C_{\beta}$, that
$|\partial^{\beta}v(q)|\leq C_{\beta} e^{-c|q|^{3/2}}$ for all $q\in \R^{r_1+r_2}$.
This ends the induction.

The previous considerations were formal, but the decay properties of
the functions $u_k$ imply that
$A_ju_k\in L^2$ for every $j$ and $k$, which concludes the proof.
\end{proof}

\begin{prop}\label{prop:Weylcross}
  For $t>0$ let $A^{cross}_N$ the following operator on $L^2(\R^{r_1+r_2})$:

\[
A^{cross}_N=Op_W^{N^{-1}}\left(|p|^2+|q_1|^2|q_2|^2\right)
\]

Under the conditions of Definition \ref{defn:simple-crossing} and
Proposition \ref{prop:Weyl}, there
exists $a_0>0$, and two constants $0<c<C$ such that, for any $N$, for
any $a<a_0$, for
any normalized $u\in L^2(X)$ supported in
  $B(P_0,a)\times \S^1$, with $v=\mathcal{B}_N^{-1}\mathfrak{S}_N^{-1}u$, one has:
\begin{multline*}
c\langle
v,A^{cross}_Nv\rangle
+c\left(\langle v,Op_W^{N^{-1}}(|x|^2+|\xi|^2)v\rangle
  -N^{-1}\frac{n-r}{2}\right)\\  -C\langle v,Op^{N^{-1}}_W(|N^{-\frac 12},x,\xi|^3)v\rangle-CN^{\frac 43}
  \\\leq \langle u,hu\rangle-N^{-1}\mu(P_0)
\end{multline*}

In addition, the following bound holds:
\begin{multline*}
c\langle v,A^{cross}_Nv\rangle+\langle v,Op_W^{N^{-1}}(Q_F(0)(x,\xi))v\rangle -C\langle v,Op^{N^{-1}}_W(|N^{-\frac
  12},x,\xi|^3)v\rangle\\ -aC\left(\langle
  v,Op_W^{N^{-1}}(|x|^2+|\xi|^2)v\rangle
  -N^{-1}\frac{n-r}{2}\right)-CN^{-\frac 43}\\\leq \langle
u,hu\rangle-N^{-1}\mu(P_0)+\frac{N^{-1}}{2}\sum_i\lambda_i(0)\\
\leq C\langle v,A^{cross}_Nv\rangle+\langle v,Op_W^{N^{-1}}(Q_F(0)(x,\xi))v\rangle +C\langle v,Op^{N^{-1}}_W(|N^{-\frac
  12},x,\xi|^3)v\rangle\\ +aC\left(\langle
  v,Op_W^{N^{-1}}(|x|^2+|\xi|^2)v\rangle
  -N^{-1}\frac{n-r}{2}\right)+CN^{-\frac 43}.
\end{multline*}

\end{prop}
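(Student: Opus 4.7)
The plan is to mirror the strategy of Proposition \ref{prop:Weylreg}, adapting it to the crossing geometry using Proposition \ref{prop:cross-normal-form} and Proposition \ref{prop:Melin-cross}. I would first conjugate by the quantum map $\mathfrak{S}_N$ and the Bargmann transform $\mathcal{B}_N$, so that $\langle u, h u\rangle = \langle v, Op_W^{N^{-1}}(g) v\rangle + O(N^{-\infty})$ for $v = \mathcal{B}_N^{-1} \mathfrak{S}_N^{-1} u$, with $g \sim g_0 + N^{-1} g_1 + \cdots$. The principal symbol $g_0 = h \circ \sigma^{-1}$ splits, thanks to Proposition \ref{prop:cross-normal-form}, into the fast-mode quadratic term $Q_F(q_1,q_2)(x,\xi) = \sum_i \lambda_i(q_1,q_2)(x_i^2+\xi_i^2)$, the slow-mode quadratic term $Q_S(q_1,q_2)(p_1,p_2)$, the quartic crossing potential $V(q_1,q_2) = \sum \alpha_{ijkl} q_{1,i} q_{1,j} q_{2,k} q_{2,l}$, and remainders $O(\|x,\xi,p_1,p_2\|^3) + O(\|q_1\|^2\|q_2\|^2(\|q_1\|+\|q_2\|))$.

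Next, I would lower-bound the fast part exactly as in Proposition \ref{prop:Weylreg}: since $\lambda_i(q_1,q_2) > 0$ for $(q_1,q_2) \in B(0,a)$ with $a < a_0$,
\[
Op_W^{N^{-1}}(Q_F(q_1,q_2)(x,\xi)) \geq \tfrac{N^{-1}}{2}\sum_i \lambda_i(q_1,q_2) + c\, Op_W^{N^{-1}}(|x|^2+|\xi|^2) - c\tfrac{N^{-1}(n-r_1-r_2)}{2},
\]
and similarly $Op_W^{N^{-1}}(Q_S(q_1,q_2)(p)) \geq c\, Op_W^{N^{-1}}(|p|^2)$ by positivity of $Q_S$ on the compact parameter range. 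The positivity assumption on the tensor $\alpha$ in Proposition \ref{prop:cross-normal-form} gives $V(q_1,q_2) \geq c |q_1|^2 |q_2|^2$, and the residual $O(\|q_1\|^2\|q_2\|^2(\|q_1\|+\|q_2\|))$ is absorbed into $aC \cdot Op_W^{N^{-1}}(|q_1|^2|q_2|^2)$. The cubic $O(\|x,\xi,p\|^3)$ is precisely the term $Op_W^{N^{-1}}(|N^{-1/2},x,\xi|^3)$ in the statement. Combining these yields the leading contribution $c\langle v, A^{cross}_N v\rangle + \langle v, Op_W^{N^{-1}}(Q_F(0)(x,\xi)) v\rangle$ after one also replaces $Q_F(q_1,q_2)$ by $Q_F(0)$, the error being $aC \bigl(\langle v, Op_W^{N^{-1}}(|x|^2+|\xi|^2) v\rangle - N^{-1}\tfrac{n-r}{2}\bigr)$ via the Hermite diagonalisation trick used in Proposition \ref{prop:Weylreg}.

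It remains to handle the subprincipal term $N^{-1} g_1$ and the constant $\tfrac{N^{-1}}{2}\sum \lambda_i(0)$. On $Z_1 \cup Z_2$, Proposition \ref{prop:Melin-cross} gives $g_1 = \tfrac{1}{2} \sum_i \lambda_i + \tfrac{1}{4} \tr(Q_S)$. The zero-point $\tfrac{N^{-1}}{2}\sum \lambda_i(q_1,q_2)$ cancels the corresponding piece of $N^{-1} g_1$, and $\tfrac{N^{-1}}{4}\tr(Q_S(0))$ together with the vanishing contribution of slow modes reconstructs $N^{-1}\mu(P_0)$. What remains is the defect $\tfrac{N^{-1}}{2}\sum(\lambda_i(q_1,q_2) - \lambda_i(0))$ plus deviations of $g_1$ from its values on $Z_1 \cup Z_2$; both are Lipschitz of order $|q_1| + |q_2|$.

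The main obstacle, and the source of the $N^{-4/3}$ term in the statement, is that this Lipschitz defect is \emph{not} controlled by $A^{cross}_N$, whose symbol grows only like $|q_1|^2|q_2|^2$. My plan is to absorb it by the elementary operator inequality
\[
N^{-1}(|q_1| + |q_2|) \leq C N^{-4/3} + c \bigl(Op_W^{N^{-1}}(|q_1|^2|q_2|^2) + Op_W^{N^{-1}}(|p|^2)\bigr),
\]
valid after Weyl quantization; one proves it by splitting on the regions $\{|q_1| \leq N^{-1/3}\}$ (where $|q_1| \leq N^{-1/3}$ pointwise gives the $N^{-4/3}$ constant) and its complement (where one uses $|q_1| \leq N^{1/3}|q_1|^2$ and the fact that the simple crossing condition $\mu(x) - \mu(P_0) \geq c\dist(P_0, x)$ from Definition \ref{defn:simple-crossing} combined with the quartic lower bound on $V$ dominates $|q_1|$ away from $\{q_2 = 0\}$, with the remaining piece absorbed via Young's inequality on $|q_1||q_2|^2 \leq |q_1|^2|q_2|^2 \cdot |q_2|^{-1} + \cdots$, treated on slices of $q_2$). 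Putting these ingredients together yields the two-sided bounds with the stated $CN^{-4/3}$ correction, the upper bound being obtained symmetrically by using, instead of positivity-type lower bounds on the quadratic symbols, their Cauchy--Schwarz upper bounds together with the same absorption trick.
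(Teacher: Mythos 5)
Your overall architecture (conjugation by $\mathfrak{S}_N$ and $\mathcal{B}_N$, splitting $g_0$ according to Proposition \ref{prop:cross-normal-form}, the Hermite-basis comparison of $Q_F(q)$ with $Q_F(0)$, and Proposition \ref{prop:Melin-cross} for $g_1$) is the intended one: the paper's own proof simply says it ``follows the exact same lines'' as Proposition \ref{prop:Weylreg}. But there is a genuine gap in the one step that is actually new here, namely the absorption of the linear defect of the subprincipal symbol. The inequality you propose,
\[
N^{-1}(|q_1|+|q_2|)\;\leq\; CN^{-\frac 43}+c\left(Op_W^{N^{-1}}(|q_1|^2|q_2|^2)+Op_W^{N^{-1}}(|p|^2)\right),
\]
is false at the symbol level: evaluate at $p=0$, $q_2=0$, $|q_1|\sim a$ fixed, where the left side is $\sim aN^{-1}$ and the right side is $CN^{-\frac 43}$. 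Your regional splitting does not repair this: on $\{|q_1|\gg N^{-1/3},\ |q_2|\ll N^{-1/3}\}$ the bound $|q_1|\leq N^{1/3}|q_1|^2$ gives $N^{-2/3}|q_1|^2$, which is not dominated by $c|q_1|^2|q_2|^2$ there, and the ``Young's inequality on slices of $q_2$'' step is not a valid operator bound. The true mechanism is an uncertainty-principle effect that only exists at the operator level: a state concentrated near $\{q_2=0\}$ with $|q_1|\sim a$ pays transverse energy $\gtrsim aN^{-1}$ from $Op_W^{N^{-1}}(|p_2|^2)+Op_W^{N^{-1}}(|q_1|^2|q_2|^2)$ (the ground state of the induced harmonic oscillator in $q_2$). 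This is precisely what the model-operator analysis of Subsection \ref{subsec:study-model-operator} encodes: after the rescaling $q\mapsto N^{-1/3}q$, $p\mapsto N^{-2/3}p$, the slow part $Q_S(q)(p)+V(q)+N^{-1}\ell\cdot q$ becomes $N^{-4/3}$ times (a perturbation of) the operator $P$, and the bound $P\geq c(Q(iD)+|q|)$ — whose hypotheses are guaranteed exactly by the last bullet of Definition \ref{defn:simple-crossing}, $\mu(x)-\mu(P_0)\geq c\dist(P_0,x)$ — is what dominates the linear term. Your proof never invokes this operator inequality, and without it the lower bounds cannot be closed.

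Relatedly, your attribution of the $CN^{-\frac 43}$ error differs from the paper's. The paper locates it in the positivity defect of Weyl quantization: from the symbol inequality $c\,\sigma(A_N^{cross})\leq g_0$ one can only deduce $c\,A_N^{cross}\leq Op_W^{N^{-1}}(g_0)+O(N^{-\frac 43})$, the loss being of size (semiclassical parameter)$^2\times$(second derivatives) at the natural scale $q\sim N^{-1/3}$, $p\sim N^{-2/3}$, where all relevant terms are of order $N^{-4/3}$. Once the linear term is handled through $P$ as above, this Gårding-type loss, together with the $N^{-1}O(|q|^2)$ Taylor remainder of $g_1$ (also $O(N^{-4/3})$ at that scale, and absorbable for larger $|q|$ using $P\geq c(Q(iD)+|q|)$ again), accounts for the stated error; your symbol-level splitting is not needed and, as written, does not work.
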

\begin{proof}
  The proof follows the exact same lines as for Proposition
  \ref{prop:Weylreg}: the difficulty lies in handling the $(x,\xi)$ terms
  which take a similar form as above.

  The supplementary $N^{-\frac 43}$ terms are due to positivity
  estimates for the Weyl quantization: from $c\sigma(A_N^{cross})\leq
  g_0$ we can only deduce $cA_N^{cross}\leq
  Op_W^{N^{-1}}(g_0)+O(N^{-\frac 43})$.
\end{proof}

\subsection{Spectral gap}
As before, we show that the almost eigenfunction found previously
corresponds to the first eigenvalue.

\begin{prop}\label{prop:order-cross}
  Let $h\geq 0$ be such that the minimum of the Melin value $\mu$ is only reached at
  one point, which is a simple crossing point of $h$.
  
  Let $(\mu_i)$ be the real sequence constructed in
  Proposition \ref{prop:eigenf-cross}, and let $\lambda_{min}$ be the first eigenvalue of
  $T_N(h)$. 

Then
\[
\lambda_{min}\sim N^{-1}\sum_{i=0}^{\infty}N^{-\frac
  i6}\mu_i.
\]
Moreover, there exists $c>0$ such that,
for every $N$, one has
\[
\dist(\lambda_{min}),Sp(T_H(h))\setminus
\{\lambda_{min}\})\geq c N^{-\frac 43}.
\]
\end{prop}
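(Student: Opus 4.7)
The strategy mirrors the proof of Proposition \ref{prop:spgap-reg}: show that every unit vector orthogonal to the approximate eigenfunction of Proposition \ref{prop:eigenf-cross} has energy exceeding $\lambda_{\min}$ by at least $cN^{-4/3}$.

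First, conjugate $T_N(h)$ by the quantum map $\mathfrak{S}_N$ and the Bargmann transform $\mathcal{B}_N$ to obtain $Op_W^{N^{-1}}(g)$, then apply the anisotropic squeezing $q_i\mapsto N^{-1/3}q_i$, $p_i\mapsto N^{-2/3}p_i$, $x\mapsto N^{-1/2}x$, $\xi\mapsto N^{-1/2}\xi$ used in Proposition \ref{prop:eigenf-cross} to reduce to $Op_W^{1}(\tilde g_N)$. In these coordinates the expansion
\[
Op_W^1(\tilde g_N) = N^{-1}A_0 + N^{-4/3}A_2 + N^{-3/2}A_3 + O(N^{-5/3})
\]
holds, where $A_0$ is the harmonic oscillator in the fast variables (with $\mu_0=\min\Sp A_0$ and ground state $e^{-|x|^2/2}$ up to $q$-dependence) and $A_2=P$ is the model operator of Subsection \ref{subsec:study-model-operator}. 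Lemma \ref{lem:qmaps-speed} ensures that the conclusion transported back to $T_N(h)$ still holds at scale $N^{-4/3}$.

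Second, fix a small constant $C>0$ and assume $v_N$ is a unit vector satisfying $\langle v_N,Op_W^1(\tilde g_N)v_N\rangle\leq N^{-1}\mu_0+CN^{-4/3}$. The operator $A_0$ has a spectral gap $2\min_i\lambda_i(0)>0$ independent of $N$, and this gap is weighted by $N^{-1}$. Decomposing $v_N(q,x)=e^{-|x|^2/2}w_N(q)+r_N(q,x)$ with $r_N(q,\cdot)\perp e^{-|\cdot|^2/2}$ pointwise in $q$, the energy bound forces
\[
cN^{-1}\|r_N\|_{L^2}^2 \leq CN^{-4/3},
\]
so $\|r_N\|_{L^2}=O(N^{-1/6})$ and $\|w_N\|_{L^2}=1+O(N^{-1/3})$. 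Pairing with $e^{-|x|^2/2}$ in the $x$ variable and using that $A_3$ is odd in $(x,\xi)$, the next-order contribution reduces to an effective problem on the $q$ variables:
\[
\langle w_N, Pw_N\rangle \leq \mu_2 + C'' + O(N^{-1/6}),
\]
where $C''$ can be made arbitrarily small by shrinking $C$.

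Third, invoke Proposition \ref{prop:simple}: the first eigenvalue of $P$ is simple, with unique normalized ground state $v_0$. Combined with the compactness of the resolvent, this yields a spectral gap $\gamma>0$ for $P$. Choosing $C$ small enough that $C''<\gamma/2$, the variational bound above forces $|\langle w_N,v_0\rangle|\geq a>0$ for some $a$ independent of $N$. Since the approximate first eigenfunction of Proposition \ref{prop:eigenf-cross}, viewed in the squeezed coordinates, agrees at leading order with $e^{-|x|^2/2}v_0(q)$, this contradicts orthogonality to it. Hence both simplicity of $\lambda_{\min}$ and the $N^{-4/3}$ spectral gap follow, and the asymptotic expansion $\lambda_{\min}\sim N^{-1}\sum N^{-i/6}\mu_i$ is inherited from Proposition \ref{prop:eigenf-cross}.

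The main obstacle is step two, bookkeeping the intermediate orders $N^{-i/6}$ for $i=1,3,5,\ldots$ that have no analogue in the regular case: one must verify that odd-order symbols integrate to zero against the Gaussian in the fast variables (so that they do not contaminate the reduction to $P$), and that the Agmon-type decay $e^{-c|q|^{3/2}}$ from Proposition \ref{prop:Agmon} is strong enough to justify the $L^2$ pairings throughout. The gap in $P$ needed for the final contradiction is not explicit, but for our purposes only its positivity matters, which is guaranteed by simplicity and compactness of resolvent.
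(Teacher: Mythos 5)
Your proposal follows essentially the same route as the paper's proof: decompose a low-energy vector as $e^{-|x|^2/2}w_N(q)$ plus a remainder controlled by the $O(1)$ spectral gap of the fast harmonic oscillator $A_0$ (weighted by $N^{-1}$), reduce at order $N^{-4/3}$ to the model operator $P$, and use the simplicity and compact resolvent of $P$ (Proposition \ref{prop:simple}) to force a nonzero overlap with $v_0$, contradicting orthogonality to the quasimode of Proposition \ref{prop:eigenf-cross}. The paper's argument is just a terser version of yours, so the proposal is correct and in agreement with it.
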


\begin{preuve}
  Let us show that any function orthogonal to the one proposed in
  Proposition \ref{prop:eigenf-cross} has an energy which is larger by at least
  $cN^{-\frac 43}$.

Let $(v_N)_{N\geq 1}$ be a sequence of unit vectors in $L^2(\R^n)$. If $\langle
v_N,Op^{1}_W(\tilde{g}_N)v_N\rangle \leq N^{-1}\mu_0+CN^{-\frac
  43}$ for some $C$, then $v_N=e^{-\frac{|x|^2}{2}}w_N(q)+O(N^{-\frac 13})$, with
$\|w_N\|_{L^2}=1+O(N^{-\frac 13})$.

If $C-\mu_2$ is strictly smaller than the spectral gap of the
operator $P$ then
$\langle w_N,v_0\rangle\geq a$ for some $a>0$,
which concludes the proof.
\end{preuve}

%%% Local Variables:
%%% mode: latex
%%% TeX-master: "Article"
%%% End:

\section{Comparative Weyl law}

\begin{defn}
  \label{defn:dim}
We will say a miniwell has \emph{dimension} $r$ when the dimension of
the zero set of $h$ around the miniwell is $r$. Similarly, we will
say a crossing point has dimensions $(r_1,r_2)$ when the dimensions of
the two manifolds $Z_1$ and $Z_2$ around the point are $r_1$ and
$r_2$, respectively.
\end{defn}

\begin{proof}[Proof of theorem D]$\,$\\
  \begin{enumerate}[label=\Alph*.]
    \item
  The first statement stems directly from Proposition
  \ref{prop:plr}. In fact, the Subsection 4.2 concludes the proof of
  Theorem A using only the fact that the considered eigenvalue lies in
  this spectral window for every $\epsilon>0$, hence the claim.
  
  \item Let $u$
    a sequence of eigenfunctions of $T_N(h)$ in the spectral window
    above, and $P_0$ the miniwell of interest. The
    sequence $u$ localises near $P_0$ so that one can apply a quantum
    map $\mathfrak{S}_N$. 

Let $v_N=\mathcal{B}_N^{-1}\mathfrak{S}_N^{-1}u_N$. The first lower bound on Proposition \ref{prop:Weylreg} yields
\[
\langle v_N,Op_W^{N^{-1}}(|x|^2+|\xi|^2)v_N\rangle-N^{-1}\frac{n-r}2\leq
C\Lambda_NN^{-1}.\]

From Proposition \ref{prop:plr}, for every $\delta>0$, if $\epsilon>0$ is small enough then
$u$ localises on $B(P_0,\delta)$. If $\delta$ is
small enough then
\[\delta C\left(\langle
    v_N,Op_W^{N^{-1}}(|x|^2+|\xi|^2)v_N\rangle-N^{-1}\frac{n-r}2\right)\leq
  \frac{N^{-1}\Lambda_N}{2}.\]

Let us prove an upper bound in the number of eigenvalues of
$T_N(h)$. The second lower bound in Proposition \ref{prop:Weylreg}
leads to
\begin{multline*}
  c\langle v_N,A_N^{reg}v_N\rangle + \langle
  v_N,Op_W^{N^{-1}}(Q_F(0)(x,\xi))v_N\rangle
  -\frac{N^{-1}}2\sum_i\lambda_i(0)\\\leq \langle
  u_N,hu_N\rangle-N^{-1}\mu(P_0)+\frac{N^{-1}\Lambda_{N}}2.
  \end{multline*}
For $\epsilon$ smaller than the spectral gap of $Q_F(0)(x,D)$, the left-hand side has less than $C\Lambda_N^r$ eigenvalues smaller
than $\frac{3N^{-1}\Lambda_N}{2}$, hence the claim.

The lower bound proceeds along the same lines. The upper bound in
Proposition \ref{prop:Weylreg} yields
\begin{multline*}C\langle v_N,A_N^{reg}v_N\rangle+ \langle
  v_N,Op_W^{N^{-1}}(Q_F(0)(x,\xi))v_N\rangle -
  \frac{N^{-1}}2\sum_i\lambda_i(0)\\\geq \langle u_N,hu_N\rangle
  -N^{-1}\mu(P_0)-\frac{N^{-1}\Lambda_N}2.
\end{multline*}
The left-hand side has always more than $c\Lambda_N^r$ eigenvalues
smaller than $\frac{N^{-1}\Lambda_N}{2}$, hence the claim.
\item The proof for crossing points is the same except for the actual
  count of eigenvalues of the reference operator, which stems from
  Proposition \ref{prop:Weyl}.
\end{enumerate}
\end{proof}

%%% Local Variables:
%%% mode: latex
%%% TeX-master: "Article"
%%% End:

\section{Examples: frustrated spin systems}

In this Section, we discuss the class of examples introduced
in Subsection \ref{sec:appl-spin-syst}. We first describe the minimal
set in the general setting, and we prove that, for
a loop of six triangles, the classical minimal set is not a smooth
manifold; then we prove that the choice of the vectors
on triangle ``leaves'' does not affect $\mu$; to conclude we treat numerically a simple case supporting
the general conjecture that $\mu$ is minimal on planar configurations.

\subsection{Description of the zero set}

If a graph is made of triangles $(V_i)_{i\in J}$, and if we denote by
$\{u_i,v_i,w_i\}$ the three elements of $\S^2$ at the vertices of
$V_i$, we write
$$h(e)=\sum_{i\in J}u_i\cdot v_i + u_i\cdot w_i+v_i\cdot w_i.$$
Moreover, for all $u,v,w\in \S^2$ one has
$$u\cdot v + u\cdot w + v\cdot w=\frac 12 \|u+v+w\|^2-\frac 32.$$

A way to minimize the symbol is thus to try to choose the vectors such
that, for each triangle in the graph, the vectors at the vertices form
a great equilateral triangle on $\S^2$ (this is equivalent to the
requirement that their sum is the zero vector). As the example of the
Husimi tree shows, this minimal set can be degenerate: once the vector
at a vertex is chosen, there is an $\S^1$ degeneracy in the
choice of the vectors at its children. 

In the general case this solution is not always possible as can be
seen on the right of Figure \ref{fig:Huskag}. Moreover, even if this solution is possible, the
minimal set is not a submanifold, as we will see in an example.

A subset of interest of these minimal configurations consists in the
case where all vectors are coplanar; this corresponds to colouring the
graph with three colours. For some graphs made of triangles, there is no
3-colouring. Conversely, if the size of the graph grows the number of
3-colourings may grow exponentially fast.

A common conjecture in the physics literature is that, when
applicable, the Melin value $\mu$ is always minimal only
on planar configurations, except for a leaf degeneracy (see
Proposition \ref{prop:leaf}): in other terms, in the semiclassical limit,
the quantum state presumably selects only planar configurations. It is unclear
whether a study of the sub-subprincipal effects would discriminate
further between planar configurations, but numerical evidence suggests
that the quantum ground state is not distributed evenly on them at
large spin.

Other selection effects tend to select the planar configurations:
consider for instance the classical Gibbs measure, at a very small
temperature. This measure concentrates on the points of the minimal
set where the Hessian has a maximal number of zero eigenvalues
(thermal selection); in
this case it always corresponds to planar configurations, if any.

\subsection{Irregularity of the zero set}
One of the key examples of frustrated spin systems is the Kagome
lattice. We restrict our study to the case of one
loop of six triangles. 

\begin{prop}
  For a loop of six triangles (as in Figure \ref{fig:6tri}), the
  minimal set is not smooth.
\end{prop}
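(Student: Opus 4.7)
The plan is to exhibit a planar configuration $p_0$ on the minimal set at which the defining cyclic constraints have a rank drop, then compute the Taylor expansion of the residual constraint to second order, and conclude that the minimal set is locally cut out by an indefinite quadratic form with nontrivial kernel; since such a form has a singular zero locus at the origin, the minimal set cannot be smooth at $p_0$.

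First, I would reduce the problem to the graph's six shared vertices. On each triangle, the vanishing of the sum $v_i + v_{i+1} + w_i = 0$ determines the leaf spin as $w_i = -(v_i + v_{i+1})$, and the constraint $\|w_i\| = 1$ is then equivalent to $v_i \cdot v_{i+1} = -\tfrac12$. Thus the minimal set projects diffeomorphically onto the cyclic variety
\[
\mathcal Z = \bigl\{(v_1,\ldots,v_6) \in (\S^2)^6 : v_i \cdot v_{i+1} = -\tfrac12,\ i=1,\ldots,6,\ v_7 := v_1\bigr\}.
\]
I pick $p_0$ to be the planar configuration with $v_{i+3} = v_i$ and $v_1, v_2, v_3$ an equilateral triangle in the $xy$-plane, and introduce normal coordinates $v_i = v_i^0 + a_i \tau_i + b_i e_3 + c_i v_i^0$ with $\tau_i = e_3 \times v_i^0$ and $c_i = -\tfrac12(a_i^2+b_i^2)+\cdots$ enforcing $|v_i|=1$. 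Using the explicit angles $\tau_i\cdot v_{i+1}^0 = \sqrt 3/2$ and $\tau_i \cdot \tau_{i+1} = -1/2$, the constraints
\[
F_i := v_i \cdot v_{i+1} + \tfrac12 = \tfrac{\sqrt 3}{2}(a_i - a_{i+1}) + Q_i(a,b) + O(|(a,b)|^3)
\]
have linear parts that depend only on the in-plane variables and telescope around the cycle. Consequently the Jacobian at $p_0$ has rank only $5$; its kernel is $7$-dimensional, spanned by the common in-plane rotation $a_1 = \cdots = a_6$ together with all six out-of-plane directions $b_1,\ldots,b_6$, and the partial cut $S = \{F_1 = \cdots = F_5 = 0\}$ is a smooth $7$-dimensional submanifold near $p_0$, parametrized via the implicit function theorem by $(a, b_1,\ldots,b_6)$.

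The residual constraint $F_6$, restricted to $S$, has vanishing linear part, since $a_6 - a_1 = -\sum_{i=1}^5(a_i - a_{i+1})$ and each $a_i - a_{i+1}$ is $O(|b|^2)$ on $S$ by $F_i = 0$. Collecting the intrinsic quadratic part of $F_6$ together with the substitution
\[
a_i - a_{i+1} = -\tfrac{2}{\sqrt 3}\bigl[\tfrac14(b_i^2 + b_{i+1}^2) + b_i b_{i+1}\bigr] + O(|b|^3), \quad i=1,\ldots,5,
\]
into the linear term of $F_6$, the six cyclic contributions sum telescopically to
\[
F_6\big|_S = q(b) + O(|(a,b)|^3), \quad q(b) = \tfrac12 \sum_{i=1}^6 b_i^2 + \sum_{i=1}^6 b_i b_{i+1} = \tfrac12 b^\top(I + A_{C_6})b,
\]
where $A_{C_6}$ is the adjacency matrix of the $6$-cycle. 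Its eigenvalues are $2\cos(\pi k/3)$ for $k=0,\ldots,5$, yielding eigenvalues $\{\tfrac32, 1, 1, 0, -\tfrac12, 0\}$ for $q$: signature $(3,1)$ with a $2$-dimensional kernel. Therefore $\{q = 0\} \subset \R^6$ is a $5$-dimensional singular quadric cone, and $\mathcal Z$ is locally diffeomorphic near $p_0$ to $\R_a \times \{q(b) = 0\}$, whose tangent cone at $p_0$ is not a linear subspace; this shows that $\mathcal Z$, and hence the original minimal set, fails to be a smooth manifold at planar configurations.

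The main obstacle is the second-order computation giving $q$: one must consistently track the sphere-normalization term $c_i v_i^0$ appearing in $v_i \cdot v_{i+1}$, together with the implicit-function correction to $a_6 - a_1$ on $S$ that feeds $b$-quadratic data back into the linear term of $F_6$; the cancellations that produce the clean expression $\tfrac12 b^\top(I + A_{C_6})b$ are delicate and must be done carefully, in particular ensuring that the neglected $O(|(a,b)|^3)$ terms are really subleading with respect to the quadratic form $q$ on its effective directions. Once $q$ is pinned down, the non-smoothness of $\mathcal Z$ at $p_0$ reduces to the classical fact that the zero locus of an indefinite quadratic form with a nontrivial negative part is singular at the origin.
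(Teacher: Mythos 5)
Your computation is correct and your route is genuinely different from the paper's. The paper argues softly: after fixing two spins to kill the $SO(3)$ symmetry, the minimal set sits inside a four\--dimensional torus as the zero set of one non\--trivial real\--analytic condition, hence has dimension at most $3$ if it is a manifold; but at a planar configuration one exhibits four independent one\--parameter families of deformations staying in the minimal set, a contradiction. You instead compute the local model explicitly: you reduce to the six shared spins with the cyclic constraints $v_i\cdot v_{i+1}=-\tfrac12$, observe the rank drop from $6$ to $5$ at the planar point, and identify the residual constraint on the smooth partial cut $S$ as the quadric $q(b)=\tfrac12 b^{\top}(I+A_{C_6})b$ of signature $(3,1)$ with a two\--dimensional kernel. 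I checked the linear coefficients $\tfrac{\sqrt3}{2}(a_i-a_{i+1})$, the intrinsic quadratic part $b_ib_{i+1}+\tfrac14(b_i^2+b_{i+1}^2)$ coming from the normalization term $c_iv_i^0$, the telescoping substitution, and the spectrum $\{\tfrac32,1,1,0,0,-\tfrac12\}$: all correct. Your approach costs more computation but buys more: it exhibits the precise quadratic model of the singularity, which is exactly the kind of local structure the paper's Definitions 6.1 and 6.9 are designed to capture, whereas the paper's argument only certifies non\--smoothness.

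One step at the end is asserted rather than proved, and as stated it is too strong: the zero set of $q+O(3)$ need \emph{not} be locally diffeomorphic to $\R_a\times\{q=0\}$ when $q$ is degenerate, so you cannot invoke that diffeomorphism. What you actually need, and what is true, is a two\--sided tangent\--cone statement. The inclusion of the tangent cone of $\{F_6|_S=0\}$ in $\R_a\times\{q=0\}$ is immediate from $F_6|_S=q+O(3)$; but this alone does not exclude that $\{F_6|_S=0\}$ is a smooth manifold whose tangent space is one of the (at most $4$\--dimensional) linear subspaces of that cone. You must also produce enough genuine curves in $\mathcal Z$: for any null direction $v$ of $q$ with $dq_v\neq 0$ (i.e.\ $v\notin\ker Q$), the implicit function theorem applied along the ray $tv$ yields a curve $t\mapsto tv+O(t^2)$ inside $\{F_6|_S=0\}$, so the tangent cone contains all such $v$, hence (being closed) the whole cone $\R_a\times\{q=0\}$. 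Since $q$ has signature $(3,1)$ one finds two such null vectors $u,v$ with $q(u+v)\neq0$, so the tangent cone is not a linear subspace and $\mathcal Z$ is not smooth at $p_0$. With that paragraph added, your proof is complete; alternatively, you could close the argument exactly as the paper does, by combining your $7$\--dimensional space of first\--order deformations with the analyticity bound on the dimension.
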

\begin{proof}
  The
choice of the two vectors drawn on the left in Figure \ref{fig:6tri} induces a global
$SO(3)$ rotation, and without loss of generality we will keep them
fixed. Moreover, the position of the six inner vectors determines the
position of the six outer vectors in a unique and smooth way, so we
will forget about the latter.

\begin{figure}
\centering
\includegraphics[scale=0.4]{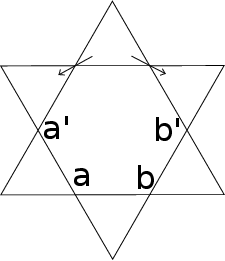}
\includegraphics[scale=0.4]{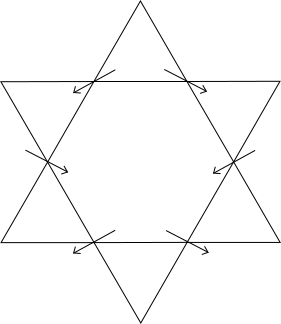}
\caption{On the left, a graph with 6 triangles and two prescribed
  vectors. On the right, a particular (planar) configuration.}
\label{fig:6tri}
\end{figure}

The space of configurations of the pair $(a,a')$ is a subset of a
two-dimensional torus; indeed the choice for $a'$ is made along a
circle with center having its center on the lower-left vector, and the choice for $a$ is
similarly made along a circle with center $a'$. The above applies to
the pair $(b,b')$. Hence, the set of global configurations is a subset of a
four-dimensional torus: the subset on which the angle between $a$ and
$b$ is exactly $\frac{2\pi}{3}$. This cannot be an open set of the
four-dimensional torus, as every coordinate and function involved is
real analytic. Hence, if this set is a smooth manifold, its dimension
does not exceed three.

On the other hand, consider the particular case of Figure \ref{fig:6tri} which represents a
particular configuration. From this configuration, one stays in the
minimal set by moving $a'$ along a circle with center $a$; one can also
move along $a$ only, or along $b$ only, or along $b'$ only. The set of
possible smooth moves from this configuration spans a set of dimension
at least four, hence the contradiction.

\end{proof}

\subsection{Degeneracy for triangle leaves}

The simplest example of a frustrated system is a triangle with three
vertices, connected with each other. In this setting the degeneracy of
the minimal set (which is exactly the set of configurations such that
the sum of the three vectors is zero) corresponds to a global $SO(3)$
symmetry of the problem; in this case the function $\mu$ is constant.

Consider the left part of Figure \ref{fig:1tri}. The three elements $e_1,e_2,e_3$ lie on the same
large circle. We choose the coordinate $q_i$ along this circle and the
coordinate $p_i$ orthogonal to it. In these coordinates, the
half-Hessian of the classical symbol can be written as:
$$2(p_1+p_2+p_3)^2+(q_1-q_2)^2+(q_1-q_3)^2+(q_2-q_3)^2.$$
Since this quadratic form does not depend on the positions of $e_1,e_2,e_3$,
the function $\mu$ is constant.
\begin{figure}
  \centering

  \includegraphics[width=0.3\textwidth]{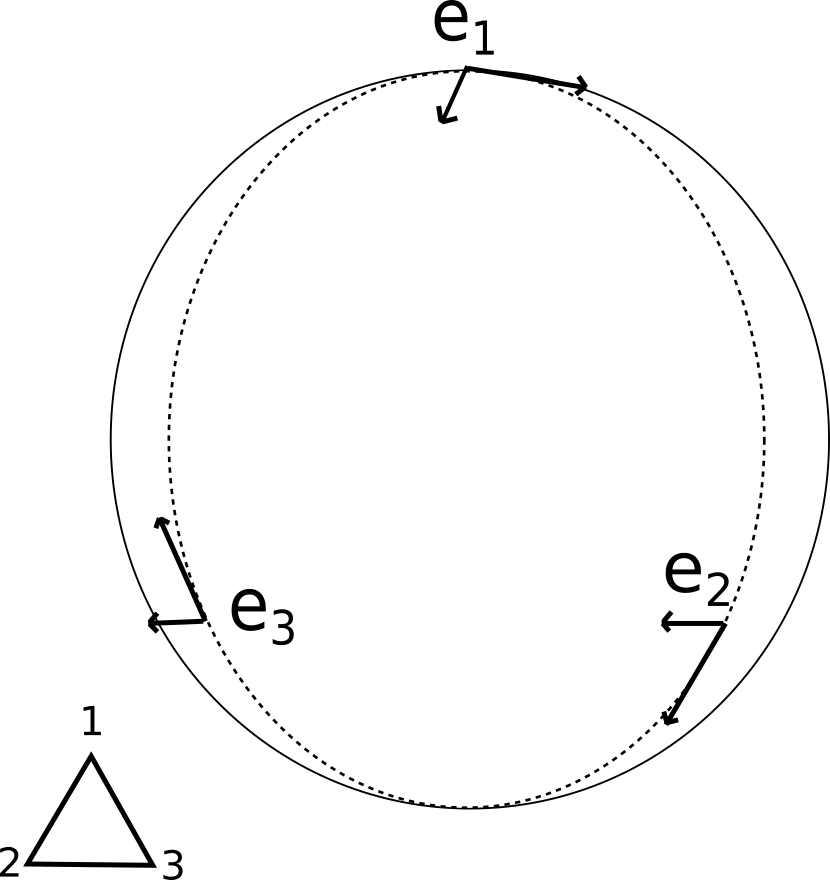}
\hspace{3em}
\includegraphics[width=0.3\textwidth]{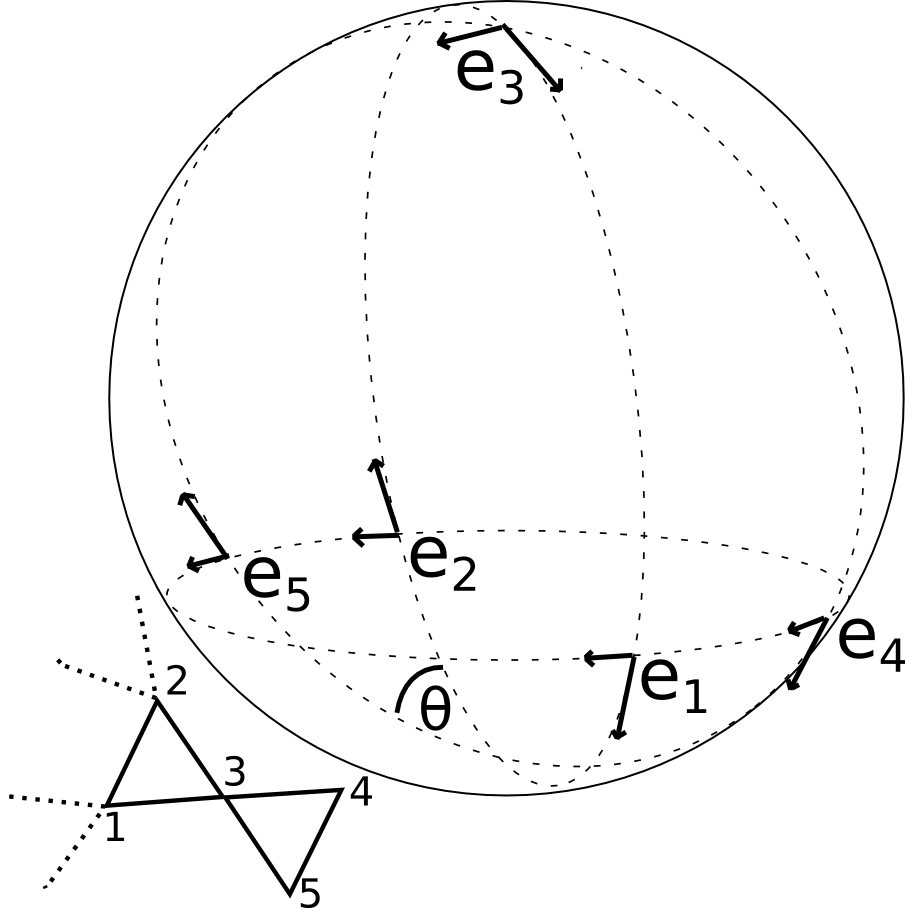}
\\
  \caption{General minimal configuration for one triangle (left) and an
     triangle leaf (right) of spins, with choice of tangent coordinates.}
  \label{fig:1tri}
\end{figure}

In the following Proposition we consider a slightly more general situation.
\begin{prop}\label{prop:leaf}
Consider a graph with a ``triangle leaf'' as in the inset on the
right of Figure 2. In order to find a classical minimum for such a graph, once
all vectors except for $e_4$ and $e_5$ are chosen, then $e_4$ and
$e_5$ are fixed except for a rotation of centre $e_3$.

The Melin value $\mu$ does not depend on this choice.
\end{prop}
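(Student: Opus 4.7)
The plan is to construct, for each $\theta$, a Kähler isometry $\Phi_\theta$ of $M$ that sends a given minimal configuration to a neighboring one while preserving the symbol $h$. The equality of Melin values will then follow from the unitary invariance of $\mu$ recorded after Definition \ref{def:charac}.

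First I would pin down the one-parameter family explicitly. At a classical minimum the triangle $(e_3,e_4,e_5)$ satisfies $e_3+e_4+e_5=0$; once $e_3$ and all vectors outside the leaf are fixed, the admissible $(e_4,e_5)\in\S^2\times\S^2$ form the orbit of any particular solution under the rotations $R_\theta\in SO(3)$ that fix the axis $\R e_3$. I would then extend each such rotation to a map $\Phi_\theta:M\to M$ acting as $R_\theta$ on the two $\S^2$-factors indexed by $e_4$ and $e_5$ and as the identity on the remaining factors.

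Next I would verify two properties of $\Phi_\theta$. The invariance $h\circ \Phi_\theta=h$ is immediate: the only edges of the graph touching $\{e_4,e_5\}$ are the three edges of the leaf triangle, and the scalar products $e_3\cdot e_4$, $e_3\cdot e_5$, $e_4\cdot e_5$ are preserved because $e_3$ lies on the rotation axis and $R_\theta$ is a Euclidean isometry of $\R^3$. The fact that $\Phi_\theta$ is a Kähler isometry of $M$ reduces to the fact that a rotation of $\S^2$ fixing a point is itself a Kähler isometry of $(\S^2,\omega_{FS})$, which in turn follows because it preserves both the round metric and the orientation and therefore commutes with the almost complex structure.

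The differential $d\Phi_\theta|_P:T_PM\to T_{\Phi_\theta(P)}M$ is thus a unitary map between the two Kähler tangent spaces, and since $P$ and $\Phi_\theta(P)$ are both critical points of $h$, the identity $h\circ\Phi_\theta=h$ yields $\Hess_P(h)=(d\Phi_\theta|_P)^*\Hess_{\Phi_\theta(P)}(h)$. Read through Definition \ref{def:mu}, this means the semi-definite quadratic forms defining $\mu(P)$ and $\mu(\Phi_\theta(P))$ differ by a unitary change of variables, so the invariance of $\mu$ noted after Definition \ref{def:charac} gives $\mu(P)=\mu(\Phi_\theta(P))$. The only delicate point is the Kähler-unitarity of the differential of a rotation of $\S^2$ fixing a point, which is essentially the statement that the stabilizer in $SO(3)$ of a point of $\S^2$ is the $U(1)$ acting by rotation on the tangent plane.
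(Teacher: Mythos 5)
There is a genuine gap, and it sits at the exact point you flag as ``immediate'': the map $\Phi_\theta$ is \emph{not} a symmetry of $h$. You act by a fixed rotation $R_\theta$ of $\R^3$ (about the axis through the value of $e_3$ at the base configuration) on the two sphere factors indexed by $e_4,e_5$, and by the identity on all other factors. But $e_3$ is itself a coordinate on $M$: for a general configuration $f=(f_i)_i$ one has $f_3\cdot(R_\theta f_4)=(R_\theta^{-1}f_3)\cdot f_4\neq f_3\cdot f_4$ unless $f_3$ happens to lie on the rotation axis. Hence $h\circ\Phi_\theta=h$ holds only on the slice where the third coordinate is frozen at its base value, not on a neighbourhood of the critical point in $M$, and the identity $\Hess_P(h)=(d\Phi_\theta|_P)^*\Hess_{\Phi_\theta(P)}(h)$ does not follow. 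Since Definition \ref{def:mu} computes $\mu$ from the \emph{full} Hessian on $T_PM$ — including the directions that move $e_3$ and its other neighbours — restricting attention to the $(e_4,e_5)$-slice is not an option. Concretely, the cross terms of the Hessian coupling the tangent variables at site $3$ to those at sites $4,5$ do depend on $\theta$: they are the terms $4cp_3(p_4+p_5)-4sq_3(p_4+p_5)-2cq_3(q_4+q_5)-2sp_3(q_4+q_5)$ in the paper's computation, and the change of variables that removes this dependence, $p_4\mapsto cp_4-sq_4/2$, $q_4\mapsto cq_4+2sp_4$, is symplectic but \emph{not} unitary. So your construction does not exhibit the two Hessians as $U(n)$-equivalent, and the invariance of $\mu$ recorded after Definition \ref{def:charac} cannot be invoked.

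This is not a repairable detail but the heart of the matter: the $\S^1$ degeneracy of the classical minimal set at a triangle leaf is \emph{not} generated by a continuous symmetry of $h$ (the only such symmetry is the diagonal $SO(3)$ action); if it were, there would be no ``order by disorder'' phenomenon to discuss, and the analogous claim for non-leaf degeneracies — where $\mu$ is in general \emph{not} constant — would follow by the same reasoning. The paper instead uses the decomposition $\mu(Q)=\frac14\tr Q+\inf\bigl(\Sp(Op_W^1(Q))\bigr)$: the trace of the Hessian is checked by inspection to be independent of $\theta$, and an explicit symplectic (non-unitary) change of coordinates brings the Hessian to a $\theta$-independent normal form, which suffices because the second summand is a symplectic invariant. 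If you want to salvage a symmetry-style argument, you would have to produce a symplectomorphism of a neighbourhood of $P$ in $M$ intertwining the two Hessians \emph{and} separately control the trace; the fixed rotation $\Phi_\theta$ does neither.
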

\begin{proof}
Denoting $c=\cos(\theta)$ and $s=\sin(\theta)$, and using local
coordinates as in the right part of Figure \ref{fig:1tri}, the 2-jet of the
Hamiltonian reads, in local coordinates:
\begin{multline*}
Q(p_1,p_2,q_1,q_2,...)+2(p_4+p_5)^2+(q_4-q_5)^2+q_4^2+q_5^2+4q_3^2+4p_3^2\\
+4p_3(p_1+p_2)-2q_3(q_1+q_2)\\+4cp_3(p_4+p_5)-4sq_3(p_4+p_5)\\
-2cq_3(q_4+q_5)-2sp_3(q_4+q_5).
\end{multline*}
The trace of this quadratic form does not depend on
$\theta$. Hence, in order to prove that $\mu$ does not depend on
$\theta$ it is sufficient to find symplectic coordinates in which this
quadratic form does not depend on $\theta$.
A first symplectic change of variables leads to:
\begin{multline*}
  Q(p_1,p_2,q_1,q_2,...)+4p_4^2+q_4^2+3q_5^2+4q_3^2+4p_3^2\\
  +4p_3(p_1+p_2)-2q_3(q_1+q_2)\\
 +4\sqrt{2}cp_3p_4-4\sqrt{2}sq_3p_4-2\sqrt{2}cq_3q_4-2\sqrt{2}sp_3q_4.
\end{multline*}

Let us make the following change of variables:
\begin{align*}
  p_4&\mapsto cp_4-s\frac{q_4}{2}\\
  q_4&\mapsto cq_4+2sp_4
\end{align*}
This change of variables is symplectic, and preserves $4p_4^2+q_4^2$. The quadratic form becomes:
\begin{multline*}
  Q(p_1,p_2,q_1,q_2...)+4p_3(p_1+p_2)-2q_3(q_1+q_2)\\+4p_4^2+q_4^2+3q_5^2+4p_3^2+4q_3^2
+8p_3p_4-4q_4q_3.
\end{multline*}
Since this quadratic form does not depend on $\theta$, the function $\mu$ does not
depend on $\theta$.
\end{proof}

\subsection{A numerical example}
\label{sec:numerical-example}

The last example we treat is the case of a loop of 4 triangles. In this setting, the minimal set is not a submanifold but a
union of three submanifolds, with transverse intersection. The general
configuration is shown in Figure \ref{fig:4tri}. The
intersections correspond in fact to the case of crossing along a
submanifold (see Definition \ref{defn:cross-subm}), since the Hessian
matrix at each minimal point can be computed explicitly. From parity
properties we can deduce that the function $\mu$ reaches a
\emph{local} minimum on these crossings, however we cannot conclude
that the ground state selects this set.

The quadratic form is again explicit if the local coordinates are
chosen conveniently, but the computation of $\mu$ depends on an exact
diagonalisation which we believe to be less explicit than in the previous case.

% We first discuss the general minimal configuration. Up to a global
% $SO(3)$ symmetry, the two vectors $e_1$ and $e_4$ are fixed and it
% remains to choose $e_2$ and $e_3$ under the constraint that the angles
% between two consecutive vectors is always $\frac{2\pi}{3}$. For
% instance $e_2$ lies on the horizontal circle in figure \ref{fig:4tri}.
 \begin{figure}
   \centering
   \includegraphics[scale=0.2]{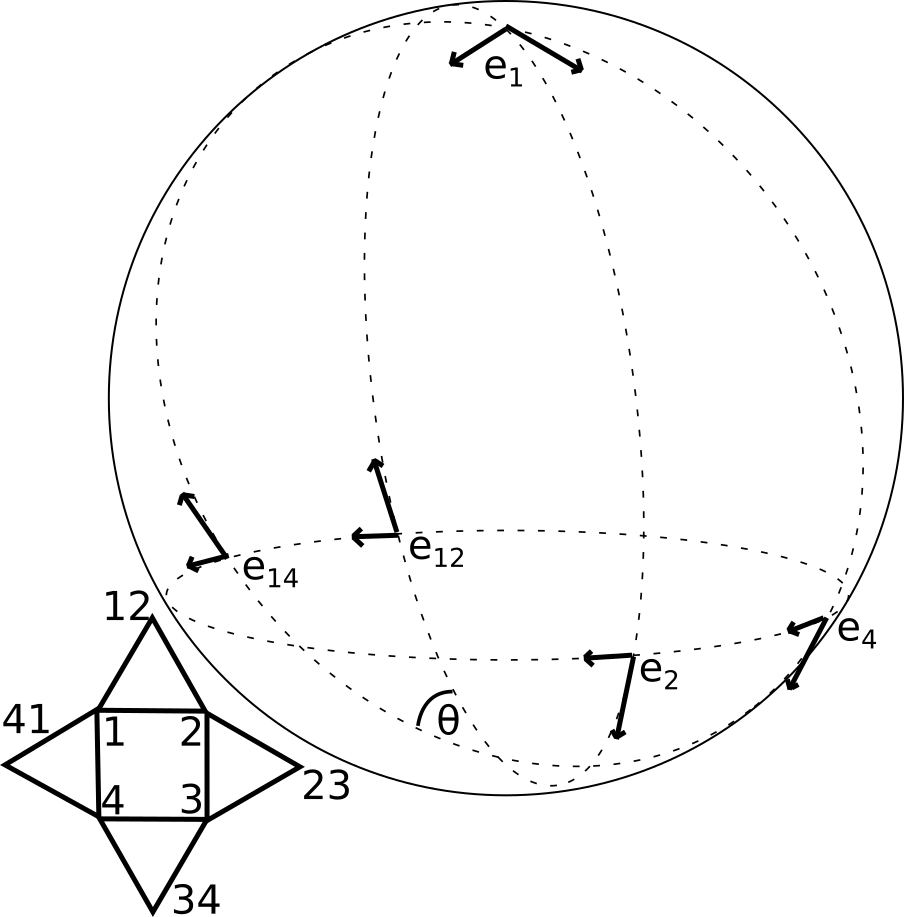}
   \includegraphics[scale=0.2]{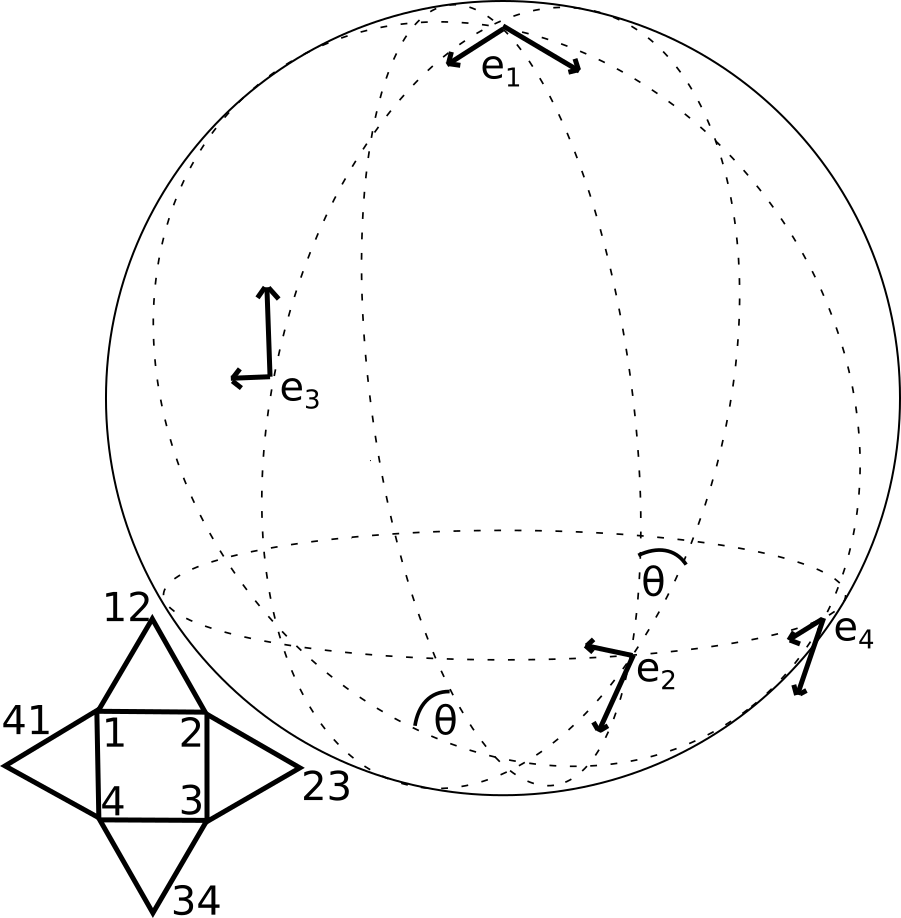}
   \caption{The two general configurations for a loop of 4
     triangles. For the first configuration, $e_3=e_1$ so
     $e_{12}=e_{23}$ and $e_{14}=e_{34}$. In the second
     configuration, for simplicity, we omitted to draw $e_{12},e_{23},e_{34}$
     and $e_{14}$.}
   \label{fig:4tri}
 \end{figure}
% Suppose $e_2\neq e_4$ and $e_2\neq e_{14}$ : then the two planes
% $span\{e_1,e_2\}$ and $span\{e_1,e_4\}$ make
% an angle $\theta$ as in figure \ref{fig:4tri} Then the two circles with centres $e_2$ and
% $e_4$, and passing through $e_1$, intersect again at some other point
% $M$. Hence either $e_3=e_1$ (on the left) or $e_3=M$ (on the right). Note that in the second case,
% the angle formed by the two planes $span\{e_1,e_2\}$ and
% $span\{e_3,e_2\}$ is equal to $\theta$.

% If $e_2=e_4$, then $e_3$ can be freely chosen on the circle of centre
% $e_2=e_4$ and passing through $e_1$, but this situation is equivalent
% to the case $e_1=e_3$. If $e_2=e_{14}$, then the circles through $e_2$ and
% $e_4$ are tangent at $e_1$ so $e_3=e_1$. Hence the two situations
% represented in figure \ref{fig:4tri} exhaust the possibilities.

% The minimal set of the symbol is a union of submanifolds which are
% tangent at the points where the first and second general situations
% are equivalent, i.e. situations where all spins are coplanar.

% In the case to the left, we choose the same local coordinates as in
% the case of two triangles. The quadratic form is, in this case:

% $$NASTY.$$

% It is possible, following the lines of the exact computation for two
% triangles, to reduce this quadratic form to the sum of two quadratic
% forms, one depending only on the position variables, the other only on
% the momentum variables:

% $$NASTIER.$$
A numerical plot of $\mu$ as a function of $\theta$ is
presented in Figure \ref{fig:4tri-results}. Note that $\mu$ is not
smooth on the crossing point, in accordance with Definition \ref{defn:cross-subm}. We believe that a closed expression of $\mu$ is, in
this case, rather technical to obtain. Figure
\ref{fig:4tri-results} is a strong indication that $\mu$ is only
minimal on flat configurations.
\begin{figure}
  \centering
  \includegraphics[width=0.49\textwidth]{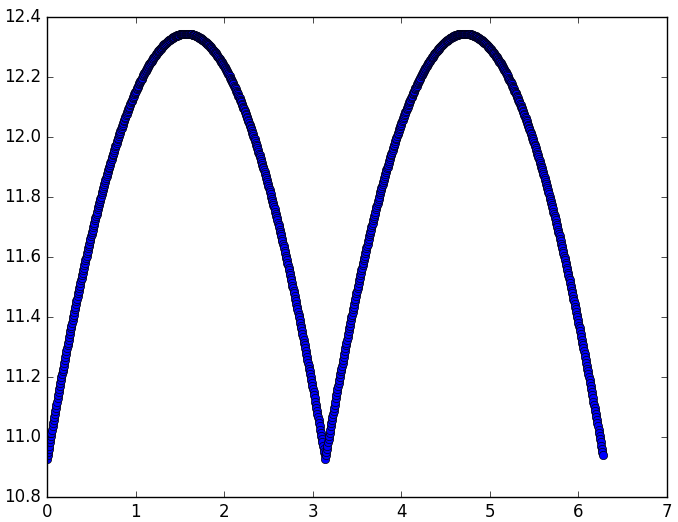}
  \includegraphics[width=0.49\textwidth]{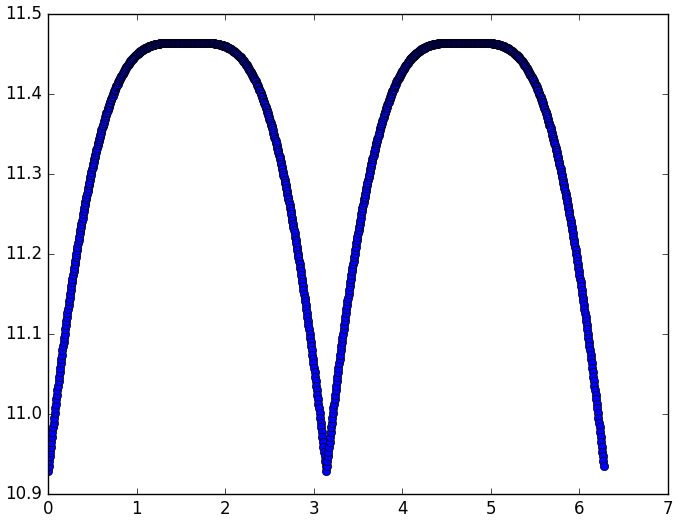}

  \caption{Numerical plot of the value of $\mu$ as a function of
    $\theta$ in the two situations corresponding to figure
    \ref{fig:4tri}. In both cases the minimum is reached on planar
    configurations, with a minimal value $\mu \simeq 10.928$.}
  \label{fig:4tri-results}
\end{figure}

% \subsection{Size of the selected set for large triangle chains}
% \label{sec:size-selected-set}

% In this section we consider an open chain of $n$ triangles. The
% minimal set of the classical symbol is $M_0=SO(3)\times
% \left(\S^1)^{n-1}$, a smooth manifold of dimension $n-2$.

% Since $\mu$ is a continuous function, the subset $S\subset M_0$ where
% $\mu$ is minimal is closed. Conjecturally, except for the two
% triangles at the extremities, $S$ should consist on planar
% configurations, that is
% $$\dim S=5.$$

% Let us give a weak version of this result.
% \begin{prop}
%   The Haussdorff dimension of $S$ has sub-linear growth in $n$.
% \end{prop}
% \begin{proof}
%   Let $Q_1,Q_2$ be semipositive quadratic form. From the definition of
%   $\mu$ and the Courant-Hilbert formulation, there holds
%   $$\mu(Q_1+Q_2)\leq \mu(Q_1)+\mu(Q_2).$$

%   In particular, if $x=(P,\theta_1,\ldots,\theta_{n-1})\in M_0$, then
%   $\mu(x)$ does not depend on $P$ (since any global rotation by an
%   element of $SO(3)$
%   preserves the complex structure and the symbol). If we simply write
%   $$\mu(\theta_1,\ldots,\theta_{n-1}):=\mu(x),$$
%   then for every $1\leq k \leq n-3$ one has $$\mu(\theta_1,\ldots,\theta_{n-1})\leq
%   \mu(\theta_1,\ldots,\theta_k)+\mu(\theta_{k+2},\ldots,\theta_{n-1}).$$

%\end{proof}

%%% Local Variables:
%%% mode: latex
%%% TeX-master: "Article"
%%% End:

%\input{model_operators}
%\input{normal_forms}

\bibliographystyle{abbrv}
\bibliography{math}
\end{document}